\numberwithin{equation}{section}
\newcommand{\Rm}{{\rm Rm}}
\newcommand{\Ric}{{\rm Ric}}
\newcommand{\Vol}{{\rm Vol}}
\newcommand{\diam}{{\rm diam}}
\newcommand{\rv}{{\rm v}}
\newtheorem{theorem}{Theorem}[section]
\newtheorem{proposition}[theorem]{Proposition}
\newtheorem{lemma}[theorem]{Lemma}
\theoremstyle{definition}
\newtheorem{definition}[theorem]{Definition}
\theoremstyle{remark}
\newtheorem{remark}{Remark}[section]
\theoremstyle{remark}
\theoremstyle{remark}
\theoremstyle{remark}
\theoremstyle{remark}
\begin{document}

\title{$\epsilon$-regularity for shrinking Ricci solitons and Ricci flows}
\date{\today}
\author{Huabin Ge}
\address{Department of Mathematics, Beijingjiaotong University}
\email{hbge@bjtu.edu.cn}
\author{Wenshuai Jiang}
\address{School of mathematical Sciences,  Zhejiang University and Mathematics Institute, University of Warwick}
\email{wsjiang@zju.edu.cn}

\maketitle

\begin{abstract}
	In \cite{ChTi05}, Cheeger-Tian proved an $\epsilon$-regularity theorem for $4$-dimensional   Einstein manifolds without volume assumption. They conjectured that similar results should hold for critical metrics with constant scalar curvature, shrinking Ricci solitons, Ricci flows in $4$-dimensional manifolds and higher dimensional Einstein manifolds.  In this paper we consider all these problems. First, we construct counterexamples to the conjecture for $4$-dimensional critical metrics and counterexamples to the conjecture for higher dimensional Einstein manifolds. For $4$-dimensional shrinking Ricci solitons, we prove an $\epsilon$-regularity theorem which confirms Cheeger-Tian's conjecture with a universal constant $\epsilon$. For Ricci flow, we reduce Cheeger-Tian's $\epsilon$-regularity conjecture to a backward Pseudolocality estimate. By proving a global backward Pseudolocality theorem, we can prove a global $\epsilon$-regularity theorem which partially confirms Cheeger-Tian's conjecture for Ricci flow.  Furthermore, as a consequence of the $\epsilon$-regularity, we can show by using the structure theorem of Naber-Tian \cite{NaTi} that a collapsed limit of shrinking Ricci solitons with bounded $L^2$ curvature has a smooth Riemannian orbifold structure away from a finite number of points.	\end{abstract}



\section{Introduction}
In \cite{ChTi05}, Cheeger-Tian proved the following $\epsilon$-regularity theorem for $4$-dimensional Einstein manifolds without volume assumption.

\begin{theorem}[Cheeger-Tian]\label{t:cheeger-Tian}
	There exist universal constants $\epsilon$ and $C$ such that if $(M,g,p)$ is a $4$-dimensional Einstein manifold with $\Ric=\lambda g$, $|\lambda|\le 3$ and
	\begin{align}\label{e:L2small_Cheeger-Tian}
		\int_{B_r(p)}|\Rm|^2\le \epsilon
	\end{align}
	for some $r\le 1$, then we have curvature estimate
	\begin{align}\label{e:curvature_Cheeger-Tian}
		\sup_{B_{r/2}(p)}|\Rm|\le Cr^{-2}.
	\end{align}
\end{theorem}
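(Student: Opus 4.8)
The plan is to argue by contradiction, after a scaling reduction and a point-selection step, splitting the analysis into a non-collapsed case (disposed of by compactness) and a collapsed case, which carries the real content. By rescaling $g\mapsto r^{-2}g$ we may assume $r=1$; since $r\le 1$ this only shrinks $|\lambda|$, so the hypotheses persist, and \eqref{e:L2small_Cheeger-Tian} is scale invariant in dimension four. Suppose the theorem fails: there are $4$-dimensional Einstein manifolds $(M_i,g_i,p_i)$ with $\Ric_{g_i}=\lambda_i g_i$, $|\lambda_i|\le 3$, $\int_{B_1(p_i)}|\Rm_{g_i}|^2\le\epsilon_i\to 0$, but $\sup_{B_{1/2}(p_i)}|\Rm_{g_i}|\to\infty$. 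Applying the standard point-selection lemma to $x\mapsto\dist\big(x,\partial B_1(p_i)\big)^2|\Rm_{g_i}|(x)$ yields points $q_i\in B_1(p_i)$ and radii $\rho_i\le \tfrac12$ with $\rho_i^2|\Rm_{g_i}|(q_i)\to\infty$ and $\sup_{B_{\rho_i}(q_i)}|\Rm_{g_i}|\le 4|\Rm_{g_i}|(q_i)$; in particular $|\Rm_{g_i}|(q_i)\to\infty$. Rescaling the metrics so that $|\Rm_{g_i}|(q_i)=1$, and still writing $(M_i,g_i,q_i)$, we reach the model situation: $\lambda_i\to 0$, $\sup_{B_1(q_i)}|\Rm_{g_i}|\le 4$, $|\Rm_{g_i}|(q_i)=1$, and $\int_{B_1(q_i)}|\Rm_{g_i}|^2\le\epsilon_i\to 0$ (the last because $B_{\rho_i}(q_i)\subset B_1(p_i)$ and $|\Rm|^2\,dV$ is scale invariant in dimension four). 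We must derive a contradiction.

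\emph{Non-collapsed case.} If $\Vol\big(B_1(q_i)\big)\ge v_0>0$ along a subsequence, then bounded curvature together with this volume bound give a lower injectivity-radius bound and hence $C^{1,\alpha}$ precompactness; since the metrics are Einstein with bounded curvature, elliptic regularity in harmonic coordinates upgrades this to $C^\infty_{\mathrm{loc}}$ convergence to an Einstein limit $(M_\infty,g_\infty,q_\infty)$ with $\lambda_\infty=0$ and $\int_{B_1(q_\infty)}|\Rm_{g_\infty}|^2=0$. Thus $g_\infty$ is flat, contradicting $|\Rm_{g_\infty}|(q_\infty)=\lim_i|\Rm_{g_i}|(q_i)=1$. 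Hence, after passing to a subsequence, $\Vol\big(B_1(q_i)\big)\to 0$: the sequence collapses with bounded curvature.

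\emph{Collapsed case (the crux).} Here I would invoke the structure theory of collapse with bounded curvature (Cheeger-Fukaya-Gromov): for large $i$, $B_{1/2}(q_i)$ carries a nilpotent Killing structure with an associated fibration by infranilmanifolds of positive dimension over a space $Y_i$ of dimension $\le 3$, and $g_i$ is uniformly close to being invariant under this structure. The aim is to show that smallness of $\int|\Rm|^2$ forces $|\Rm|$ to be small near $q_i$, contradicting $|\Rm|(q_i)=1$. I would do this by induction on the number $k\in\{1,2,3\}$ of collapsed directions, with the non-collapsed case above as the base case $k=0$. At each step the engine is twofold: (i) the elliptic Weitzenböck equation $\Delta\Rm=\Rm\ast\Rm$ (up to a term linear in $\Rm$ with the small coefficient $\lambda_i$), valid because $\nabla\Ric=0$ for Einstein metrics so the contracted second Bianchi identity turns the rough Laplacian of $\Rm$ into a quadratic expression; and (ii) cutoff functions $\varphi_i$ adapted to the collapsing geometry, with $|\nabla\varphi_i|$ and $|\Delta\varphi_i|$ controlled uniformly, produced by the good-chopping technique of Cheeger-Colding and Cheeger-Tian using the local fibration. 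Pairing $\varphi_i^2|\Rm|^{s}$ against the Weitzenböck equation, integrating by parts, and using $\sup|\Rm|\le 4$ to absorb the nonlinear term, one extracts in each step either a genuine bound $\sup_{B_{1/4}(q_i)}|\Rm|\le C\big(\int_{B_{1/2}(q_i)}|\Rm|^2\big)^{\theta}$ (once effective non-collapse has been reached) or an improvement of the normalized $L^2$ curvature by a definite factor, obtained by using the local symmetry to unwrap one collapsed direction and thereby lower $k$. Either outcome yields $\sup_{B_{1/4}(q_i)}|\Rm|\to 0$, the desired contradiction; tracking the universally bounded number of steps produces universal constants $\epsilon,C$.

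\emph{Main obstacle.} All the difficulty sits in the collapsed case, in two linked points: constructing cutoff functions with uniformly bounded Laplacian on balls whose injectivity radius tends to zero --- which is exactly why one needs the local fibration and the good-chopping machinery, and where one must verify the bounds do not degrade as the collapse worsens --- and making the induction on the number of collapsed directions genuinely quantitative, so that each step gains a \emph{universal} amount (a fixed factor in the $L^2$ curvature, or one unit of dimension) and terminates after a number of steps independent of the manifold, which is what finally yields a single universal $\epsilon$. Dimension four enters both through this collapse analysis (the fibers and bases are low-dimensional, so the induction is short) and, in the original argument, through the Chern-Gauss-Bonnet and signature formulas, which bound $\int|\Rm|^2$ by topology on a fixed region and underlie the point-selection step --- features that fail in higher dimensions, consistent with the counterexamples constructed elsewhere in this paper.
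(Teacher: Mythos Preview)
This theorem is not proved in the present paper; it is quoted from \cite{ChTi05} as background. The paper only records the ingredients of Cheeger--Tian's proof: the chopping theory of \cite{ChGr,CFG}, the Chern--Gauss--Bonnet formula, and the Cheeger--Colding almost metric cone theorem \cite{Ch,ChCo}. So there is no ``paper's own proof'' to compare against line by line, only this description of the method.

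Measured against that description, your sketch has the right opening moves (scale to $r=1$, point selection, split into non-collapsed and collapsed cases, and invoke the nilpotent Killing structure of \cite{CFG} in the collapsed regime), but the collapsed case --- which you correctly identify as carrying all the content --- is not close to a proof, and it departs from Cheeger--Tian's actual mechanism in a way that matters. Two concrete issues:

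\textbf{(1) The role of Chern--Gauss--Bonnet is misplaced.} You mention it only at the end, as something ``underlying the point-selection step''. Point selection is elementary and uses no topology. In \cite{ChTi05} the Chern--Gauss--Bonnet formula is the engine of the collapsed case: one uses equivariant good chopping to cut out regions with controlled boundary second fundamental form, then applies Chern--Gauss--Bonnet on those regions to convert the small $L^2$ curvature into control of the \emph{average} $L^2$ curvature on a ball of definite size (this is exactly the content of Theorem~\ref{t:curvatureL2_small} as stated later in the paper). The paper even stresses, in the remark following Theorem~\ref{t:examples}, that Gauss--Bonnet--Chern ``plays the crucial role'' since every other ingredient works in all dimensions --- and the higher-dimensional counterexamples show the theorem is false without it. Your scheme does not use Chern--Gauss--Bonnet anywhere in the actual argument.

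\textbf{(2) The proposed ``Moser iteration plus induction on collapsed directions'' does not stand on its own.} Pairing $\varphi^2|\Rm|^s$ against $\Delta\Rm=\Rm\ast\Rm$ and integrating by parts gives energy-type inequalities, but turning these into a pointwise bound requires a Sobolev inequality whose constant is exactly what degenerates under collapse; good cutoff functions with bounded Laplacian do not by themselves repair this. The alternative you propose --- ``unwrap one collapsed direction and lower $k$'' --- is not a step that produces a \emph{universal} gain without substantial further input (in \cite{ChTi05} the corresponding gain comes from Chern--Gauss--Bonnet and the almost metric cone theorem, not from an abstract dimension-reduction). As written, the inductive step is an outline of hopes rather than an argument, and there is no mechanism visible that would fail in dimension $\ge 5$, whereas the theorem does.

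In short: the non-collapsed case is fine, but for the collapsed case you should replace the Moser/unwrapping scheme by the actual Cheeger--Tian mechanism --- good chopping to produce regions with controlled boundary, Chern--Gauss--Bonnet on those regions, and the almost metric cone theorem --- which together yield the key estimate that small $\int_{B_r}|\Rm|^2$ forces small $s^4\fint_{B_s}|\Rm|^2$ for some $s\ge a\,r$ (Theorem~\ref{t:curvatureL2_small}); Anderson's average-$L^2$ $\epsilon$-regularity then finishes.
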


The $L^2$ curvature in \eqref{e:L2small_Cheeger-Tian} is scaling invariant. The proof depends on Chopping theory developed in \cite{ChGr,CFG}, Chern-Gauss-Bonnet formula and Cheeger-Colding's almost metric cone theorem \cite{Ch,ChCo}. The constant $C$ in \eqref{e:curvature_Cheeger-Tian} has a definite lower bound which was pointed out by Hein \cite{He}. If one assume further the volume is noncollapsed, such regularity estimate is well known by  \cite{A89,BKN,Ti}.  If the integral in \eqref{e:L2small_Cheeger-Tian} is taken in the average sense:
\begin{align}\label{e:L2_small_average}
	r^4\fint_{B_r(p)}|\Rm|^2\le \epsilon,
\end{align}
such an $\epsilon$-regularity was proved by Anderson \cite{An} which is also a key ingredient in \cite{ChTi05}; see also \cite{TiVi,TiVi2,Carron} for a proof of the $\epsilon$-regularity under the condition \eqref{e:L2_small_average}.

Cheeger-Tian \cite{ChTi05} conjectured that a similar $\epsilon$-regularity as in Theorem \ref{t:cheeger-Tian} should hold on higher dimensional Einstein manifolds, $4$-dimensional shrinking solitons, Ricci flows and critical metrics which are  metrics satisfying $\Delta \Ric=\Rm\ast \Ric$; see also Definition \ref{d:critical}. In this paper we will consider these problems.


Let us recall that $(M,g,f)$ is a gradient shrinking Ricci soliton if
\begin{align}\label{e:soliton}
	\Ric_g+\nabla^2f=\lambda g \mbox{ for some constant $\lambda>0$. }
\end{align}
After normalization, we assume $f(p)\equiv \min f=0$ which is obtainable for some $p\in M$. In the rest of this paper we will use this normalization for shrinking Ricci solitons.

Shrinking Ricci Soliton is a generalization of Einstein metric. Under certain conditions on the potential function $f$, one is able to get similar results as Einstein metrics. More generally, one can consider Bakry-Emery Ricci curvature $\Ric_f\equiv \Ric+\nabla^2 f$. Under Bakry-Emery Ricci curvature lower bound, a volume comparison theorem was proved by Wei-Wylie\cite{WeWy}. Wang-Zhu \cite{WaZh} generalized Cheeger-Colding theory to  Bakry-Emery setting. Recently, Zhang-Zhu \cite{ZhZh} generalized the codimension four singularity theorem in \cite{ChNa} to the setting with bounded Bakry-Emery Ricci curvature and bounded gradient of the potential functions. The key ingredient for these generalizations is that one can use Moser's iteration to deduce Cheng-Yau gradient estimate and Li-Yau heat kernel estimate through integration by parts on the term $\nabla^2f$.

Expect the Einstein metric which is a trivial soliton, can we classify all the shrinking Ricci solitons? This is a very interesting problem which is still open. Naber \cite{Na} classified $4$-dimensional shrinking Ricci solitons under bounded curvature and nonnegative curvature operator assumptions. To approach the complete classification of shrinking Ricci solitons, several useful estimates on the potential functions were obtained by \cite{CaZh,HaMu} where they showed  universal gradient estimates for the potential functions; see also Lemma \ref{l:scalarbounds}. In \cite{MuWa}, Munteanu-Wang proved that $4$-dimensional shrinking Ricci solitons have bounded Riemann curvature provided a bounded scalar curvature condition, where the Riemann curvature bound depends on the local geometry around $p$. One key observation in \cite{MuWa} is that the Riemann curvature in dimensional four manifold is controlled by the curvature of the level set of the potential function $f$.

In this paper we will prove an $\epsilon$-regularity for shrinking Ricci solitons, the proof of which would rely on the basic Bakry-Emery geometry and an apriori potential function estimate in \cite{CaZh,HaMu}.  Our first main result of this paper is the following $\epsilon$-regularity theorem which generalizes Cheeger-Tian's $\epsilon$-regularity for Einstein metrics to shrinking Ricci solitons.

\begin{theorem}[$\epsilon$-regularity]\label{t:regularity}
	There exist universal constants $\epsilon$ and $C$ such that if $(M,g,f,p)$ is a $4$-dimensional shrinking Ricci soliton \eqref{e:soliton} with $|\lambda|\le 1$ and
	$$\int_{B_r(x)}|\Rm|^2\le \epsilon,$$
	for some $r\le 1$, then we have bounded curvature estimate
	\begin{align}\label{e:curvature_bound_reg}
		\sup_{B_{r/2}(x)}|\Rm|\le C\cdot \frac{d(p,x)^2+1}{r^2}.
	\end{align}
\end{theorem}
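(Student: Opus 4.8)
The plan is to run Cheeger--Tian's proof of Theorem \ref{t:cheeger-Tian} with the normalized Bakry--\'Emery curvature $\Ric_f := \Ric + \Hess f \equiv \lambda g$ playing the role of the constant Einstein tensor, using the a priori potential estimates of Cao--Zhou and Haslhofer--M\"uller (cf.\ Lemma \ref{l:scalarbounds}) to absorb the fact that $\Ric$ itself is no longer bounded. Rescaling the metric so that $r=1$ preserves $|\lambda|\le 1$, the normalization $f(p)=\min f=0$, and --- by scale invariance in dimension four --- the hypothesis $\int_{B_1(x)}|\Rm|^2\le\epsilon$. Set $\Lambda := d(p,x)+2$. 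On $B_1(x)$ one then has $0\le R\le C\Lambda^2$, $|\nabla f|\le C\Lambda$ and $0\le f\le C\Lambda^2$, so the weight $e^{-f}$ lies in $[e^{-C\Lambda^2},1]$; and from $|\Ric|\le C|\Rm|$ together with $\Hess f = \lambda g-\Ric$ one gets $\int_{B_1(x)}(|\Ric|^2+|\Hess f|^2)\le C(\Lambda^2+\epsilon)$. The goal is reduced to $\sup_{B_{1/2}(x)}|\Rm|\le C\Lambda^2$.

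I would then install the Bakry--\'Emery analogues of the analytic ingredients of Cheeger--Tian's argument on $B_1(x)$, each with constants depending on $\Lambda$ (polynomially, or at worst through $e^{C\Lambda^2}$, which is harmless since the target bound already carries a factor $\Lambda^2$): (i) Bishop--Gromov volume comparison, the segment inequality, and good cut-off functions with controlled $|\nabla\varphi|,|\Hess\varphi|$, from Wei--Wylie and Wang--Zhu; (ii) an almost metric-cone theorem --- balls volume-close to a metric cone are Gromov--Hausdorff close to one --- from Wang--Zhu's extension of Cheeger--Colding; (iii) the elliptic identity $\Delta_f\Rm = 2\lambda\Rm + \Rm\ast\Rm$, where $\Delta_f=\Delta-\langle\nabla f,\nabla\cdot\rangle$ has drift coefficient $|\nabla f|\le C\Lambda$, so De Giorgi--Nash--Moser iteration upgrades $L^2$-control of $\Rm$ to $L^\infty$-control once a definite regularity scale is known. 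The Chern--Gauss--Bonnet formula on chopped domains $\Omega$ --- relating $\chi(\Omega)$ to $\int_\Omega(|\Rm|^2-4|\Ric|^2+R^2)$ plus boundary terms --- still applies, with the $\Ric$- and $R$-contributions bounded by $C\Lambda^2$ and the boundary (second-fundamental-form) terms controlled via the Bakry--\'Emery Hessian estimate for a mollified distance function; hence every ``bubble'' still carries a definite amount of $L^2$ curvature energy.

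The main argument is then Cheeger--Tian's scheme by contradiction. If the theorem fails there are $4$-dimensional solitons $(M_i,g_i,f_i,p_i)$, rescaled to $r_i=1$, with $\int_{B_1(x_i)}|\Rm_i|^2\to 0$ while $\sup_{B_{1/2}(x_i)}|\Rm_i|>i(d(p_i,x_i)^2+1)$; a standard point-selection procedure produces $y_i$ with $Q_i:=|\Rm_i|(y_i)\gtrsim i(d(p_i,x_i)^2+1)$, so $Q_i\to\infty$, and such that the pointed rescalings $\hat g_i:=Q_ig_i$ have locally uniformly bounded curvature about $y_i$. Since $|\nabla f_i|\le C(d(p_i,x_i)+2)$ on $B_1(x_i)$, in $\hat g_i$ one has $\hat\lambda_i\to 0$, $|\hat\nabla\hat f_i|\lesssim i^{-1/2}\to 0$ locally uniformly, and $\int_{\hat B_R(y_i)}|\hat\Rm_i|^2\to 0$ for every fixed $R$. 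If the rescaled balls are volume non-collapsed they converge in $C^\infty_{\mathrm{loc}}$ to a Ricci-flat limit with $\Rm_\infty\equiv 0$ yet $|\Rm_\infty|(y_\infty)=1$, a contradiction --- this is the Bakry--\'Emery form of Anderson's $\epsilon$-regularity. The collapsed case is the crux: following Cheeger--Tian one uses (ii) to produce, at successively smaller scales, rescaled balls Gromov--Hausdorff close to products $\RR^{4-k}\times C(Y)$ with $k\ge 1$ along which curvature concentrates, and the Chern--Gauss--Bonnet accounting to force the iteration to terminate after boundedly many steps with $\int|\hat\Rm_i|^2\ge c>0$, again contradicting $\int|\hat\Rm_i|^2\to 0$.

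The principal obstacle is this collapsed case, compounded by the absence of a pointwise bound on $|\Ric|$ --- only $\Ric_f$ is constant. This infects comparison geometry, the chopping construction, the boundary terms in Chern--Gauss--Bonnet, and the drift in the curvature equation; it is overcome only because $f$ and $\nabla f$, hence the weight $e^{-f}$ and the drift $\nabla f$, are controlled on $B_1(x)$ by a power of $\Lambda=d(p,x)+2$ --- a loss accounted for exactly by the factor $d(p,x)^2+1$ in \eqref{e:curvature_bound_reg}. A secondary technical point is to run the bubble accounting with $|\Ric|$ and $R$ controlled only in $L^2$ (via $\Hess f$, together with $R\le C\Lambda^2$) rather than pointwise, which is handled by integrating by parts and using $\Delta f=n\lambda-R$ to absorb the error terms.
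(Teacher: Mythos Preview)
Your strategy---rerunning Cheeger--Tian's argument with Bakry--\'Emery geometry in place of a Ricci bound---is essentially the approach of Huang \cite{Hu}, which the paper explicitly contrasts with its own (Remark~\ref{r:bakry_soliton}). The paper does something genuinely different: it never reopens the Cheeger--Tian machinery. Instead it proves that small $\int_{B_1(x)}|\Rm|^2$ together with $|\nabla f|\le 1$ already forces a \emph{universal} pointwise bound on $|\Ric|$ (Proposition~\ref{p:boundsRicci}), and then invokes the bounded-Ricci result of Cheeger--Tian and Li (Theorem~\ref{t:curvatureL2_small}) as a black box. The engine is Lemma~\ref{l:boundsRiccibyscalar}: if $|\Rm|\le 1$ on $B_1$ then $\sup_{B_{1/2}}|\Ric|^2\le C\bigl(\sup_{B_1}|\nabla f|+\lambda\bigr)$---a quantitative ``scalar-flat soliton $\Rightarrow$ Ricci-flat''---which makes $|\Ric|$ \emph{strictly smaller} than $|\Rm|$ after rescaling. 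A point-selection bootstrap then closes: at the putative Ricci maximum rescale to $|\Ric|\le 1$, apply the bounded-Ricci $\epsilon$-regularity (Theorem~\ref{t:boundscurvature_assume_boundedRicc}) to get $|\Rm|\le C$, feed this into Lemma~\ref{l:boundsRiccibyscalar} to get $|\Ric|^2\le C(|\tilde\nabla f|+\tilde\lambda)\ll 1$, contradicting $|\Ric|=\tfrac14$ at the centre. The collapsed analysis, Chern--Gauss--Bonnet accounting, and chopping are thus used only inside the cited black box, under an honest pointwise Ricci bound; no Bakry--\'Emery version is needed.

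There is also a real gap in your reasoning. You assert that Bakry--\'Emery constants depending on $\Lambda$ are ``harmless since the target bound already carries a factor $\Lambda^2$,'' but the $\Lambda$-dependence infects the \emph{threshold} $\epsilon$, not only the output constant $C$; this is precisely why Huang obtains only $\epsilon=\epsilon(d(p,x))$. Your contradiction set-up is actually better than the justification you give for it: after rescaling by $Q_i\ge i\Lambda_i^2$ one has $|\hat\nabla f_i|\le C i^{-1/2}\to 0$, so the rescaled solitons are asymptotically Einstein and the relevant constants do become uniform---but this is not the mechanism you invoke, and carrying the collapsed step through would still require either a stability version of Cheeger--Tian under vanishing drift, or the observation that point selection already yields $|\hat\Ric|\le C|\hat\Rm|\le C$ on $\hat B_1(y_i)$, whereupon Theorem~\ref{t:curvatureL2_small} applies directly. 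That last move is essentially the paper's, reached by a longer road; and it is the paper's route that also transfers to Ricci flow, where no Bakry--\'Emery structure is available.
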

\vspace{.3cm}
\begin{remark}\label{r:bakry_soliton}
In \cite{Hu}, Huang proved an $\epsilon$-regularity for shrinking Ricci solitons with $\epsilon$ depending on $d(p,x)$. The proof of \cite{Hu} is similar to that of Cheeger-Tian, but he used Bakry-Emery geometry instead of Ricci curvature equation. Our proof is completely different from Huang's. Actually, as can be seen below, we reduce the proof of Theorem \ref{t:regularity} to the results of \cite{ChTi05,Li}. The key point is to control Ricci curvature. Once one can bound Ricci curvature, one can deduce Theorem \ref{t:regularity} from \cite{ChTi05,Li}. Moreover, it turns out that our approach can be used to Ricci flow where there is no Bakry-Emetry geometry.
\end{remark}

\begin{remark}\label{r:conformal_soliton}
The constant $\epsilon$ in Theorem \ref{t:regularity} is a universal constant which is independent of 	$d(p,x)$. By conformally changing to bounded Ricci curvature as in \cite{TiZh,Zh}, one can prove an $\epsilon$-regularity for shrinking Ricci soliton with $\epsilon$ depending on $d(p,x)$ as \cite{Hu}.
\end{remark}


\begin{remark}
Even one assume the metric is noncollapsed ( a lower bound for the Perelman's entropy), such curvature estimate in \eqref{e:curvature_bound_reg} is still the best estimate one can get until now,  where the curvature bound would depend on $d(p,x)$; see also \cite{HaMu}.	
\end{remark}

As a direct application of the $\epsilon$-regularity Theorem \ref{t:regularity}, we have
\begin{theorem}[Bounds on blow up points]\label{t:blowup_points}
	There are universal constants $\epsilon$ and $C$ such that if $(M,g,f,p)$ is a $4$-dimensional shrinking Ricci soliton \eqref{e:soliton} with $|\lambda|\le 1$ and $\int_M |\Rm|^2\le \Lambda$, then there exists $\{q_1,\cdots,q_N\}$ with
	\begin{align}
		N\le \Lambda \epsilon^{-1}
	\end{align}
	such that for all $x\in M$ we have
	\begin{align}
		|\Rm|(x)\le C\Big(1+d(x,p)^2\Big)\max_{1\le i\le N} \{d(x,q_i)^{-2},1\}
	\end{align}
\end{theorem}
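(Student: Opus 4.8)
The plan is to reduce Theorem~\ref{t:blowup_points} to the pointwise curvature bound of Theorem~\ref{t:regularity} by a Vitali-type covering. Let $\epsilon$ be the constant of Theorem~\ref{t:regularity} and, for each $x\in M$, define the regularity scale
\[
r_x:=\sup\Big\{r\in(0,1]:\ \int_{B_r(x)}|\Rm|^2\le\epsilon\Big\}.
\]
This is well defined and strictly positive since $\int_{B_r(x)}|\Rm|^2\to0$ as $r\to0$, and by continuity of $r\mapsto\int_{B_r(x)}|\Rm|^2$ one has $\int_{B_{r_x}(x)}|\Rm|^2\le\epsilon$, with equality whenever $r_x<1$. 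Applying Theorem~\ref{t:regularity} at $x$ with radius $r_x$ then yields, for every $x\in M$,
\[
|\Rm|(x)\ \le\ \sup_{B_{r_x/2}(x)}|\Rm|\ \le\ C\,\frac{d(p,x)^2+1}{r_x^{\,2}}.
\]
Hence it suffices to produce a finite set $\mathcal{S}=\{q_1,\dots,q_N\}$ with $N\le\Lambda\epsilon^{-1}$ and a universal $c>0$ such that $r_x\ge c\min\{1,d(x,\mathcal{S})\}$ for all $x\in M$, because then $r_x^{-2}\le c^{-2}\max\{1,d(x,\mathcal{S})^{-2}\}=c^{-2}\max_i\{d(x,q_i)^{-2},1\}$ and the asserted estimate follows with $C$ replaced by $Cc^{-2}$.

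I would construct $\mathcal{S}$ greedily, the crucial feature being that balls of \emph{smaller} radius are chosen first. Put $M_{\mathrm{bad}}:=\{x:r_x<1\}$ and build $q_1,q_2,\dots\in M_{\mathrm{bad}}$ inductively: having chosen $q_1,\dots,q_{k-1}$, let $E_k$ be the set of $y\in M_{\mathrm{bad}}$ with $B_{r_y}(y)$ disjoint from $B_{r_{q_1}}(q_1),\dots,B_{r_{q_{k-1}}}(q_{k-1})$; if $E_k=\emptyset$ stop, otherwise pick $q_k\in E_k$ with $r_{q_k}<2\inf_{y\in E_k}r_y$. The infimum is positive: an eligible sequence with radii tending to $0$ would, using completeness of $M$, produce either a point $z$ with $\int_{B_\rho(z)}|\Rm|^2\ge\epsilon$ for all $\rho>0$ or infinitely many pairwise disjoint balls of mass $\epsilon$, both impossible. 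Since the $B_{r_{q_i}}(q_i)$ are pairwise disjoint and each carries $\int_{B_{r_{q_i}}(q_i)}|\Rm|^2=\epsilon$ (as $q_i\in M_{\mathrm{bad}}$), we get $N\epsilon\le\int_M|\Rm|^2\le\Lambda$, so the process terminates with $N\le\Lambda\epsilon^{-1}$. For the net property, fix $x\in M$: if $r_x=1$ there is nothing to prove since $\max_i\{d(x,q_i)^{-2},1\}\ge1$; if $x\in\mathcal{S}$ the inequality is vacuous; otherwise $x\in M_{\mathrm{bad}}\setminus\mathcal{S}$, so $B_{r_x}(x)$ meets some $B_{r_{q_k}}(q_k)$, and choosing $k$ minimal gives $x\in E_k$, hence $r_{q_k}<2\inf_{y\in E_k}r_y\le 2r_x$ and $d(x,\mathcal{S})\le d(x,q_k)<r_x+r_{q_k}<3r_x$, i.e.\ $r_x>\tfrac13\min\{1,d(x,\mathcal{S})\}$, as needed (with $c=\tfrac13$).

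The main obstacle — and the step that genuinely requires care — is precisely the ``smaller radius first'' ordering. A naive largest-radius-first Vitali selection only guarantees that each $x$ lies within a bounded multiple of $r_{q_k}$ of $\mathcal{S}$ for some selected center $q_k$, and $r_{q_k}$ can be much larger than $r_x$; this is insufficient, since near a point $x$ with small $r_x$ the curvature can be of size $r_x^{-2}$ while $d(x,\mathcal{S})$ only reflects the larger scale $r_{q_k}$, so the desired inequality would fail. The greedy rule above charges every point to a selected ball of comparable (at most twice its own) scale, making the net scale-faithful, while finiteness of $\int_M|\Rm|^2$ forces the construction to stop after at most $\Lambda\epsilon^{-1}$ steps. (One could instead run the covering scale by scale on the dyadic ranges $\{r_x\in[2^{-k-1},2^{-k})\}$, at the cost of a universal dimensional factor in $N$, which is harmless because $\epsilon$ may be taken smaller.)
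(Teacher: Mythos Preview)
Your argument is correct and is a genuinely different route from the paper's. The paper works scale by scale: for each dyadic radius $r_i=2^{-i}$ it builds a maximal $r_i$-disjoint family $A_i$ of centers with $\int_{B_{r_i}}|\Rm|^2\ge\epsilon$, then runs a refinement procedure that merges these families across scales into a single set $A$ with $\#A\le\Lambda\epsilon^{-1}$ and the property $B_{2r_j}(A_j)\subset B_{4r_j}(A)$ for every $j$; the curvature bound then follows by locating, for each $x$, the dyadic scale at which $x$ first falls outside the $2r_j$-neighborhood of $A_j$. Your approach bypasses the dyadic bookkeeping entirely by working directly with the continuous regularity scale $r_x$ and selecting centers \emph{smallest-radius-first}; the key observation, which you isolate cleanly, is that this ordering makes the net scale-faithful (each bad $x$ is charged to a center $q_k$ with $r_{q_k}\le 2r_x$), whereas the standard largest-first Vitali rule would not. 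Your method is shorter and conceptually cleaner; the paper's dyadic version has the mild advantage that the existence of each $A_i$ is immediate (no need to argue that $\inf_{E_k}r_y>0$), and its refinement step makes the cross-scale compatibility more explicit, but both yield the same bound $N\le\Lambda\epsilon^{-1}$ and the same final estimate.
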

\vspace{.25cm}
By using the structure theorem of Naber-Tian \cite{NaTi}, we can easily deduce the following convergence result for collapsed shrinking Ricci solitons.
\begin{theorem}\label{t:structure}
	Let $(M_i^4,g_i,f_i,p_i,\lambda_i)$ be a sequence of complete $4$-dimensional shrinking Ricci solitons $\Ric+\nabla^2 f_i=\lambda_i g_i$ with $|\lambda_i|\le 1$ and $\int_{M_i}|\Rm|^2\le \Lambda$. Then after passing to a subsequence there exist a length space $(X,d,p)$ and points $\{q_1,\cdots, q_N\}\subset X$ with $N\le N(\Lambda)$ such that
	\begin{enumerate}
		\item $(M_i,g_i,p_i)\stackrel{d_{GH}}{\longrightarrow} (X,d,p)$.
		\item If $x\in X\setminus \{q_\alpha,\alpha=1,\cdots,N\}$, then a neighborhood of $x$ is isometric to a smooth Riemannian orbifold.
		\item There exists a universal constant $C$ such that the sectional curvature satisfies
$$sec(x)\ge -C\Big(1+d(x,p)^2\Big)\max_{1\le i\le N} \{d(x,q_i)^{-2},1\}.$$
	\end{enumerate}	
\end{theorem}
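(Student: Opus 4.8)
The plan is to combine the $\epsilon$-regularity Theorem~\ref{t:regularity} with the Naber--Tian structure theory for collapsed limits of bounded-curvature-energy Einstein-type spaces. First I would fix the normalization $f_i(p_i) = \min f_i = 0$ and invoke the uniform potential-function estimates of \cite{CaZh,HaMu} (cf. Lemma~\ref{l:scalarbounds}) so that on each ball $B_R(p_i)$ the geometry is controlled in a scale-invariant way; in particular $\int_{M_i}|\Rm|^2\le\Lambda$ is preserved under Gromov--Hausdorff limits along a subsequence, yielding a length space $(X,d,p)$ with $(M_i,g_i,p_i)\xrightarrow{d_{GH}}(X,d,p)$. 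This gives part (1).

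For part (2), the key is to identify the bad set. Apply Theorem~\ref{t:regularity} on all scales: for any $x_i\in M_i$ and any $r\le 1$, if $\int_{B_r(x_i)}|\Rm|^2\le\epsilon$ then $\sup_{B_{r/2}(x_i)}|\Rm|\le C(d(p_i,x_i)^2+1)r^{-2}$. A standard covering/energy-quantization argument (as in Theorem~\ref{t:blowup_points}) shows that for each fixed $R$ there are at most $N=N(\Lambda)\le\Lambda\epsilon^{-1}$ points $\{q_1^i,\dots,q_N^i\}$ where curvature can concentrate; passing to the limit, these converge to $\{q_1,\dots,q_N\}\subset X$. Away from these points, the $\epsilon$-regularity gives a uniform two-sided curvature bound (Ricci is controlled by the soliton equation plus the potential estimates, and $|\Rm|$ by Theorem~\ref{t:regularity}) on a definite-size ball, so the convergence is in the smooth topology modulo the collapsing directions. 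At this stage one feeds the local data into the Naber--Tian structure theorem \cite{NaTi}: bounded curvature on a collapsing sequence forces the limit to be, locally, a smooth Riemannian orbifold (the orbifold points arising from the collapsed nilpotent/torus directions and finite-group quotients, not from curvature blow-up). This establishes (2).

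Part (3) follows by tracking the curvature scale. At a point $x\in X\setminus\{q_\alpha\}$, let $\rho=\min_\alpha d(x,q_\alpha)$; I would choose $r\sim\rho$ so that $\int_{B_r(x_i)}|\Rm|^2\le\epsilon$ for $i$ large (energy has concentrated only near the $q_\alpha^i$'s), whence Theorem~\ref{t:regularity} yields $|\Rm|\le C(1+d(x,p)^2)\rho^{-2}$ on the approximants; this passes to the orbifold limit and in particular bounds the sectional curvature, giving $\mathrm{sec}(x)\ge -C(1+d(x,p)^2)\max_\alpha\{d(x,q_\alpha)^{-2},1\}$.

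The main obstacle I expect is part (2): verifying carefully that the Naber--Tian hypotheses apply in the \emph{shrinking soliton} setting rather than the Einstein one. Naber--Tian work with bounded Ricci (or Einstein) metrics on collapsing sequences; here I must check that the soliton equation \eqref{e:soliton} together with the universal potential bounds of \cite{CaZh,HaMu} supplies exactly the Ricci control their argument needs on each fixed ball $B_R(p_i)$, and that the curvature bound from Theorem~\ref{t:regularity} (which degenerates like $d(p,x)^2$) is still \emph{locally} uniform on compact subsets of $X\setminus\{q_\alpha\}$ — which it is, since $d(p,x)$ is bounded on such sets. The remaining subtlety is ensuring the blow-up points $q_\alpha^i$ do not escape to infinity or merge pathologically; this is handled by the global energy bound $\int_{M_i}|\Rm|^2\le\Lambda$ and the quantization in Theorem~\ref{t:blowup_points}.
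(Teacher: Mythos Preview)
Your proposal is correct and follows essentially the same route as the paper: Gromov--Hausdorff precompactness via the Wei--Wylie volume comparison and the potential estimates of Lemma~\ref{l:scalarbounds}, identification of the bad set via Theorem~\ref{t:blowup_points}, and then invocation of Naber--Tian \cite{NaTi} on the complement. One clarification on your stated ``main obstacle'': what Naber--Tian actually require is not merely a Ricci bound but uniform control of \emph{all} curvature derivatives $|\nabla^k\Rm|$ on the approximating balls; the paper obtains these directly from the soliton equation by the elliptic bootstrapping of Lemma~\ref{l:derivative_curvature} (and its higher-order analogues), so once you have $|\Rm|\le Cr^{-2}$ from Theorem~\ref{t:regularity} the full $C^\infty$ control follows automatically and the Naber--Tian hypotheses are met.
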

\vspace{.3cm}
\begin{remark}
If one assume further a volume noncollapsed condition, it is well known that $X$ is a smooth orbifold. See related discussions in \cite{CaSe,ChWa,Zhx,Zh,HaMu,We}.	 Actually, by using the techniques developed in \cite{ChNa}, it was proved in \cite{ZhZh} for noncollapsed sequence that the limit space $X$ is a smooth orbifold without bounded $L^2$ curvature assumption.  It is worth pointing out that by using the techniques in \cite{ChNa,JiNa}, one is able to prove an apriori $L^2$ curvature bounds $\fint_{B_R(p_i)}|\Rm|^2\le C(\rv,R)$ where $v$ is the volume lower bound constant; see also \cite{HaMu15}.  This apriori $L^2$ curvature estimate would imply that the limit space has finitely many singular points in any compact subset of $X$.
\end{remark}

As is pointed out in Remark \ref{r:bakry_soliton} and Remark \ref{r:conformal_soliton}, one can use Bakry-Emery geometry or conformal change to get a weak $\epsilon$-regularity for shrinking Ricci solitons. However, such technique would not work for Ricci flow. Fortunately, our proof for shrinking Ricci solitons can be used to Ricci flows. The key point is an $\epsilon$-regularity estimate for Ricci flows with the average $L^2$ curvature small at a fixed time $t=0$, the proof of which depends on a backward curvature estimate.  Actually, by proving a backward Pseudolocality estimate for Riemann curvature, we can prove the following $\epsilon$-regularity for Ricci flow which partially confirms Cheeger-Tian's conjecture in the Ricci flow case.
\begin{theorem}\label{t:regularity_Ricciflow}
	There exist universal constants $\epsilon$ and $C$ such that if $(M^4,g(t))$ is a compact Ricci flow $\partial_t g=-2\Ric$ on $(-1,1)$ with bounded scalar curvature $|R|\le S$ at all time, and the curvature integral at time $t=0$ satisfies
	\begin{align}\label{e:L2_Ricciflow}
		\int_{M}|\Rm|^2\le \epsilon,
	\end{align}
	then the curvature at $t=0$ is bounded
	\begin{align}
		\sup_M |\Rm|\le C \max\{D^{-2},1\} S,
	\end{align}
	where $D=\diam(M,g(0))$.
\end{theorem}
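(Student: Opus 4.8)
The plan is to deduce the theorem from a local, parabolic $\epsilon$-regularity statement at the final time slice, and to obtain that statement from a backward pseudolocality estimate. The local input I would isolate is: there exist universal $\epsilon_0,C_0>0$ such that if $(N^4,g(t))$, $t\in[-r^2,0]$, is a Ricci flow with $|R|\le r^{-2}$ on $N\times[-r^2,0]$, $B_0(x,2r)\Subset N$ and $\int_{B_0(x,2r)}|\Rm|^2(\cdot,0)\le\epsilon_0$, then $\sup_{B_0(x,r)}|\Rm|(\cdot,0)\le C_0 r^{-2}$. Granting this, Theorem~\ref{t:regularity_Ricciflow} follows by running it around each $x\in M$ at scale $r\simeq\min\{S^{-1/2},D,1\}$: the hypothesis $|R|\le S$ gives $|R|\le r^{-2}$ on the parabolic neighborhood, which lies inside $M\times(-1,1)$, and the global smallness gives $\int_{B_0(x,2r)}|\Rm|^2(\cdot,0)\le\int_M|\Rm|^2(\cdot,0)\le\epsilon\le\epsilon_0$. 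This already yields $\sup_M|\Rm|(\cdot,0)\lesssim\max\{S,D^{-2},1\}$, and the remaining sharpening to $C\max\{D^{-2},1\}S$ is a bootstrap: once the curvature at $t=0$ is known bounded on a backward parabolic neighborhood one rescales to normalize it, and the hypothesis $\int_M|\Rm|^2\le\epsilon$ together with the $4$-dimensional Chern--Gauss--Bonnet formula forces the normalized curvature down to the stated scale.

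The heart is the local statement, which I would prove by contradiction and point selection. Rescaling a hypothetical counterexample by point selection so that $|\Rm|(\cdot,0)=1$ at a point $x_i$ while $|\Rm|(\cdot,0)\le4$ on ever-larger balls, one obtains Ricci flows $(N_i,g_i(t))$, $t\in[-T_i,0]$ with $T_i\to\infty$, satisfying $|R|\to0$ uniformly on spacetime and $\int_{B_0(x_i,R_i)}|\Rm|^2(\cdot,0)\to0$ with $R_i\to\infty$. To pass to a smooth limit one still needs a curvature bound on a definite backward parabolic neighborhood $B_t(x_i,\tfrac12)\times[-\tau,0]$ for a universal $\tau>0$: this is exactly a \emph{backward pseudolocality} estimate, and it is where the bounded scalar curvature enters. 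I would establish it in the manner of Bamler--Zhang: bounded scalar curvature yields Gaussian upper and lower bounds for the conjugate heat kernel, hence no local collapsing backward in time and controlled distance distortion on $[-\tau,0]$, and feeding the $L^2$-smallness of the final slice into an $\epsilon$-regularity / mean-value iteration against the heat kernel upgrades this to $|\Rm|\le C r^{-2}$ on the backward cylinder. With this estimate the flows subconverge on $[-\tau,0]$ --- after unwrapping the collapse as in \cite{CFG} --- to a Ricci flow whose time-$0$ slice has $|\Rm|^2\equiv0$, i.e.\ is flat, contradicting $|\Rm|(x_\infty,0)=1$.

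Exactly as in \cite{ChTi05}, collapsing is handled through the Chern--Gauss--Bonnet formula and the chopping theory of \cite{ChGr,CFG}: the bound $\int_M|\Rm|^2\le\epsilon$ controls $\chi(M)$ and the traceless-Ricci and scalar energies, and its localization on annular regions prevents curvature from concentrating on collapsing necks, which is what makes the backward pseudolocality effective with no volume lower bound; it is also this dependence on bounded scalar curvature, through the heat-kernel estimates, that confines the result to a partial confirmation of the Cheeger--Tian conjecture. I expect the main obstacle to be the backward pseudolocality itself: propagating curvature control \emph{backward} in time from a slice known only in the integral $L^2$ sense, while allowing collapse, is substantially more delicate than Perelman's forward pseudolocality, and one must interlace the heat-kernel bounds coming from bounded scalar curvature with the Cheeger--Tian chopping and Gauss--Bonnet machinery in the parabolic setting, keeping every constant universal.
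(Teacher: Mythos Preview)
Your outline has a genuine gap at the very step you flag as the obstacle, and it is more serious than you acknowledge. The local backward pseudolocality you need --- from a pointwise bound $|\Rm|(\cdot,0)\le 4$ on a ball, together with small scalar curvature, to a curvature bound on a backward parabolic cylinder, \emph{with no volume assumption} --- is precisely the open local statement the paper identifies in Section~\ref{s:discussion}. Your proposed route via Bamler--Zhang does not close it: their Gaussian heat-kernel lower bounds and the ensuing ``no local collapsing backward in time'' depend on a lower bound for Perelman's entropy, i.e.\ on noncollapsing. In the collapsed regime the conjugate heat kernel need not satisfy the two-sided Gaussian bounds you invoke, so the mean-value iteration you sketch does not go through, and unwrapping via \cite{CFG} does not repair this because you have no curvature bound on the backward time interval with which to set up the $F$-structure in the first place. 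Consequently your compactness/limit argument stalls before you can extract a flat limit.

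The paper avoids this trap by working globally and never passing to a limit. Its backward estimate (Proposition~\ref{p:backward}) takes as hypothesis a \emph{global pointwise} bound $\sup_M|\Rm|(\cdot,0)\le A$, not $L^2$-smallness, and propagates it backward by an induction on scales using the improved estimate $|\partial_t|\Rm||\lesssim \sqrt{S}\,r^{-3}$ of Lemma~\ref{l:time_derivative_estimate}; compactness of $M$ is used to start the induction. This yields the derivative estimate $|\nabla\Rm|\lesssim A^{3/2}$ (Theorem~\ref{t:curvature_derivative}), which is then fed into the Cheeger--Tian/Li machinery (Theorem~\ref{t:curvatureL2_small}) under an assumed Ricci bound to obtain Proposition~\ref{p:eps_regularity_Ricci_flow_assume_bounded_Ricci}. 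The Ricci bound itself is produced by a scaling argument (Proposition~\ref{p:Ricci_curvaturebounds_Ricciflow}) exploiting the pointwise inequality $|\Ric|^2\lesssim |R|\cdot|\Rm|$ coming from Lemma~\ref{l:time_derivative_estimate}: if $\sup|\Ric|=A$ were large, rescale so $\sup|\Ric|=1$, apply the bounded-Ricci $\epsilon$-regularity to bound $|\Rm|$, and then $1=\sup|\Ric|^2\lesssim A^{-1}$ bounds $A$. Finally, the factor $S$ in the conclusion comes from the trivial rescaling $|R|\le S\rightsquigarrow|R|\le 1$ (Remark after Theorem~\ref{t:eps_regular_Ricciflow_global}), not from a Chern--Gauss--Bonnet bootstrap as you suggest.
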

\vspace{.4cm}

\begin{remark}
If $S=0$, which means that the flow is scalar flat and thus Ricci flat. By using Chern-Gauss-Bonnet formula one can conclude that $M$ is flat which coincides with our result. 	
\end{remark}
\begin{remark}
If one can prove a local backward pseudolocality estimate for curvature as in Proposition \ref{p:backward}, one will get a local $\epsilon$-regularity. See more discussions in Section \ref{s:discussion}.	
\end{remark}

\begin{remark}
One may relate this global estimate with Gromov's conjecture which says that a universally small $L^2$ curvature \eqref{e:L2_Ricciflow} on $(M^4,g)$ implies that $M$ exists an $F$-structure; see also \cite{Str} by using the $L^2$ curvature flow to approach such conjecture.
\end{remark}

In Section \ref{s:example} we construct examples to explain that the $\epsilon$-regularity as Theorem \ref{t:cheeger-Tian} would not hold on higher dimensional Einstein manifolds and $4$-dimensional K\"ahler metrics with zero scalar curvature which are critical metrics. In particular, we have the following
\begin{theorem}\label{t:examples}
	Cheeger-Tian's $\epsilon$-regularity conjecture doesn't hold for $4$-dimensional critical metrics or  $n$-dimensional Einstein metrics with $n\ge 5$.
\end{theorem}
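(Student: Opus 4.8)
The plan is to refute the conjecture in both cases by constructing sequences: since universal constants $\epsilon$ and $C$ are asserted, it is enough to exhibit metrics in the relevant class, together with radii $r_i\le 1$, for which $\int_{B_{r_i}(p_i)}|\Rm|^2\to 0$ while $r_i^{2}\sup_{B_{r_i/2}(p_i)}|\Rm|\to\infty$; rescaling reduces this to $r_i=1$, i.e. to metrics with tiny $L^2$ curvature on a unit ball but huge curvature at the centre. In each case I would exploit the precise way in which the mechanism that forbids this for $4$-dimensional Einstein metrics (Theorem~\ref{t:cheeger-Tian}) breaks down.

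\emph{Higher-dimensional Einstein metrics, $n\ge 5$.} Here the obstruction disappears because $\int_{B_r}|\Rm|^2$ is no longer scale invariant --- in flat directions it scales like $r^{\,n-4}$. Let $(X^{4},g_{EH})$ be the Eguchi--Hanson metric: Ricci-flat, ALE and non-flat, with $c_0:=\int_X|\Rm_{g_{EH}}|^2<\infty$. Put $M^n_{\delta,L}=(X^{4},\delta^{2}g_{EH})\times(T^{n-4},L^{2}g_0)$ with $g_0$ the flat metric, a complete Ricci-flat (hence Einstein, $\lambda=0$) manifold, and let $p=(p_X,0)$ with $p_X$ on the exceptional curve, so $|\Rm|(p)=\delta^{-2}|\Rm_{g_{EH}}|(p_X)\gtrsim\delta^{-2}$. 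Since all curvature lies on the $X$-factor and the four-dimensional energy is scale invariant,
\[
\int_{B_1(p)}|\Rm|^{2}\ \le\ \int_{M^n_{\delta,L}}|\Rm|^{2}\ =\ c_0\,\Vol(T^{n-4},L^{2}g_0)\ =\ c_0\,L^{\,n-4}\,\Vol(T^{n-4},g_0),
\]
which is independent of $\delta$. Given any candidate $\epsilon$ I fix $L=L(\epsilon)$ making the right side $<\epsilon$ and let $\delta\to 0$: the hypotheses hold with $r=1$ while $\sup_{B_{1/2}(p)}|\Rm|\to\infty$, so no universal $C$ can exist. For compact examples I would replace $X$ by a Kummer-type $K3$ surface with $16$ Eguchi--Hanson necks of size $\delta$, carrying its Calabi--Yau metric, which is $C^{\infty}$-close to the cut-and-paste model near each neck and has the same local behaviour.

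\emph{Four-dimensional critical (scalar-flat K\"ahler) metrics.} Now $\int_{B_r}|\Rm|^{2}$ \emph{is} scale invariant, so the previous device is unavailable; moreover a single curvature bubble cannot help, because every complete non-flat scalar-flat K\"ahler ALE space has $\int|\Rm|^{2}$ bounded below by a positive constant (use $W^{+}=0$ and the signature formula). The only remaining mechanism is \emph{collapse}: I want a sequence of scalar-flat K\"ahler metrics whose curvature concentrates at a point $p_i$ with $|\Rm|(p_i)=K_i\to\infty$ but for which the region where $|\Rm|\sim K_i$ has volume $o(K_i^{-2})$, so that $\int_{B_1(p_i)}|\Rm|^{2}\to 0$. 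This is not ruled out here precisely because the equation $\Delta\Ric=\Rm\ast\Ric$ is far weaker than the Einstein equation: the scalar curvature may vanish identically while the Ricci curvature becomes large and very negative near $p_i$, which is exactly what allows the volume to collapse there and thereby decouples the curvature scale from the energy scale. To realize this I would use LeBrun's explicit scalar-flat K\"ahler metrics --- the family on the total spaces $\mathcal{O}(-k)\to\mathbb{CP}^{1}$, or, more flexibly, the metrics produced from positive harmonic functions on domains of $\mathbb{H}^{3}$ by the hyperbolic ansatz --- tuning the parameters so that an $S^{1}$-symmetry degenerates near one of its fixed points, and then checking that the curvature there grows like the square of the inverse collapsing length while the localized $L^{2}$ energy on a fixed ball tends to $0$. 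Since scalar-flat K\"ahler surfaces are critical metrics, this would refute the conjectured $\epsilon$-regularity in the $4$-dimensional critical case.

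\emph{Where the difficulty lies.} The case $n\ge 5$ is soft once the failure of scale invariance is noticed. The real work is the $4$-dimensional critical case: scale invariance of the $L^{2}$ energy \emph{forces} a genuinely collapsing family, and the delicate steps are (i) isolating a scalar-flat K\"ahler family that simultaneously concentrates curvature at a point and has energy tending to zero on a unit ball, and (ii) extracting the curvature and volume asymptotics from the explicit formulae. Checking the defining equations, completeness, and (where wanted) compactness for the examples is then routine.
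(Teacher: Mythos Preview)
Your higher-dimensional construction is correct and uses the same mechanism as the paper: kill the curvature integral by collapsing a flat factor while keeping the curvature large on the nontrivial factor. The paper crosses an arbitrary $(n-1)$-dimensional Ricci-flat manifold $N_a$ with $\max|\Rm|=a$ by a tiny circle $S^1_\theta$; you cross a rescaled Eguchi--Hanson space $(X,\delta^2 g_{EH})$ by a small flat torus $T^{n-4}$. Either works. One small advantage of the paper's version is that the $S^1_\theta$-direction is completely decoupled from the curvature scale, so sending $\theta\to 0$ kills $\int_{B_1}|\Rm|^p$ for \emph{every} $p$; thus the counterexample refutes not only the literal $L^2$ statement you address but the scale-invariant $L^{n/2}$ version (and beyond), which is the more natural reading of the conjecture in dimension $n$.

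For the four-dimensional critical case there is a genuine gap. You correctly argue that scale invariance of $\int|\Rm|^2$ forces a collapsing family, but you then reach for LeBrun's hyperbolic ansatz and leave the actual construction and the curvature/volume asymptotics as acknowledged ``delicate steps''. You are overlooking a completely elementary example that does the job in two lines. Take a complete noncompact hyperbolic surface $(N,h)$ of finite volume and form the product
\[
(M_a,g_a)\;=\;(N,a^2 h)\times S^2_a,
\]
where $S^2_a$ is the round sphere of sectional curvature $a^{-2}$. Both factors are K\"ahler, so $M_a$ is K\"ahler; the scalar curvatures $-2a^{-2}$ and $+2a^{-2}$ cancel, so $M_a$ is scalar-flat and hence a critical metric. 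The curvature is constant, $|\Rm|\equiv c\,a^{-2}$, so it is as large as one likes; and because $M_a$ is complete, noncompact, of finite volume, the cusp supplies unit balls $B_1(x)$ of arbitrarily small volume, giving $\int_{B_1(x)}|\Rm|^2\le c^2 a^{-4}\Vol(B_1(x))<\epsilon$ for suitable $x$. No LeBrun parameters, no asymptotic analysis: the ``$S^1$-collapse'' you were searching for is simply the shrinking circle in the hyperbolic cusp. Your intuition that large Ricci curvature is what permits the volume collapse is exactly right here --- the Ricci curvature of $M_a$ is $\pm a^{-2}$ on the two factors --- but the realization is far simpler than the route you propose.
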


\begin{remark}
By Theorem \ref{t:examples}, the $\epsilon$-regularity estimate as Theorem \ref{t:cheeger-Tian} only holds for $4$-dimensional Einstein manifolds. Therefore, one can see in \cite{ChTi05} that the Gauss-Bonnet-Chern formula plays the crucial role in the proof of the $\epsilon$-regularity Theorem \ref{t:cheeger-Tian}, since all other techniques are valid for all dimensions.
\end{remark}

\subsection*{Organizations of the paper}
In Section \ref{s:preli}, we recall some basic results about critical metrics and shrinking Ricci solitons.

In Section \ref{s:soliton}, we will prove the $\epsilon$-regularity Theorem \ref{t:regularity}, Theorem \ref{t:blowup_points} and Theorem \ref{t:structure}. First in subsection \ref{ss:eps_average_shrinking}, we will prove an $\epsilon$-regularity with average $L^2$ curvature small, the idea of which follows from \cite{Carron,GeJi,Sze}. In Subsection \ref{ss:epsilon_bounded_Ricci}, we recall the estimate in \cite{ChTi05,Li} and prove an $\epsilon$-regularity by assuming  bounded Ricci curvature. In Subsection \ref{ss:bounds_Ricci_curvature_shrinking}, by using the result in Subsection \ref{ss:epsilon_bounded_Ricci}, we show that the Ricci curvature is bounded provided the $L^2$ curvature is small. Theorem \ref{t:regularity} will follow easily from the Ricci curvature estimate in Subsection \ref{ss:bounds_Ricci_curvature_shrinking} and the $\epsilon$-regularity with a bounded Ricci curvature assumption in Subsection \ref{ss:epsilon_bounded_Ricci}.

In Section \ref{s:eps_Ricciflow}, we will consider the $\epsilon$-regularity of Ricci flow. The main estimate is the backward Pseudolocality curvature estimate in subsection \ref{ss:backward}. In Section \ref{s:example}, we will construct some examples to explain our results. In Section \ref{s:discussion}, we will discuss a few about the $\epsilon$-regularity with local $L^2$ curvature for Ricci flow.

\section{Preliminaries}\label{s:preli}
In this section we mainly recall some known results which we will use in our proof.

\subsection{Critical metrics and $\epsilon$-regularity}
In this subsection we recall the definition of critical metrics and the $\epsilon$-regularity theorem proved in \cite{TiVi,TiVi2}(see also \cite{Carron}). Let us first present the definition of critical metrics which considered by \cite{TiVi} and \cite{Carron}.
\begin{definition}[Critical metric]\label{d:critical}
	We say that a Riemannian metric is critical if its Ricci tensor satisfies a Bochner's type equality:
	\begin{align}\label{e:critical_metric}
		\Delta \Ric=\Rm\ast \Ric
	\end{align}
\end{definition}
The following metrics are critical metrics(See \cite{TiVi},\cite{Carron}):
\begin{itemize}
\item[(1)]Bach flat metric with constant scalar curvature,
\item[(2)] Self-dual or Anti-self-dual metrics with constant scalar curvature,
\item[(3)]Metric with harmonic curvature,
\item[(4)]K\"ahler metrics with constant scalar curvature.
\end{itemize}
In \cite{TiVi,Carron}, the following $\epsilon$-regularity theorem was proved for critical metrics
\begin{theorem}\label{t:eps_regularity_critical_metric}
	Let $(M^n,g,p)$ have critical metric $g$. For any $\epsilon$ there exists $\delta(n,\epsilon)$ such that if
	\begin{align}
		\sup_{B_r(x)\subset B_2(p)}r^n\fint_{B_r(x)}|\Rm|^{\frac{n}{2}}\le \delta,
	\end{align}
then we have
\begin{align}
	\sup_{B_1(p)}|\Rm|\le \epsilon.
\end{align}
\end{theorem}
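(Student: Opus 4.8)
The plan is to prove this $\epsilon$-regularity by the De~Giorgi--Nash--Moser iteration method, following \cite{TiVi,TiVi2,Carron}. Two ingredients are needed: a Caccioppoli-type inequality for $|\Rm|$ coming from the critical-metric equation, and a Sobolev inequality on subballs of $B_2(p)$ with which to iterate. After rescaling we may take the radii of the balls involved to be comparable to $1$.

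For the first ingredient, the universal second Bianchi identity gives the Weitzenb\"ock-type identity
\begin{align*}
\Delta\Rm\ =\ \nabla(\Div\Rm)\ast 1\ +\ \Rm\ast\Rm ,
\end{align*}
and $\Div\Rm$ is, by the contracted second Bianchi identity, a contraction of $\nabla\Ric$. For a \emph{general} metric the term $\nabla(\Div\Rm)$ is not lower order --- $\Div\Rm$ is comparable in size to $\nabla\Rm$ --- and this is the real obstruction; it is precisely here that the critical condition enters. Testing $\Delta\Ric=\Rm\ast\Ric$ against $\Ric\,\eta^{2}$ and integrating by parts produces a Caccioppoli estimate for $\Ric$, hence for $\Div\Rm$:
\begin{align*}
\int_{B}|\nabla\Ric|^{2}\eta^{2}\ \le\ C(n)\int_{B}|\Rm|^{3}\eta^{2}+C(n)\int_{B}|\Rm|^{2}|\nabla\eta|^{2}.
\end{align*}
Feeding this back into the Bochner formula for $|\Rm|^{2}$ derived from the Weitzenb\"ock identity, the troublesome divergence term is absorbed and one obtains, for every power $p\ge 1$,
\begin{align}\label{e:plan-cacc}
\int_{B}\big|\nabla(|\Rm|^{p})\big|^{2}\eta^{2}\ \le\ C(n)\,p^{2}\int_{B}|\Rm|^{2p+1}\eta^{2}+C(n)\,p^{2}\int_{B}|\Rm|^{2p}|\nabla\eta|^{2},
\end{align}
i.e.\ $|\Rm|$ behaves like a subsolution of $\Delta u\ge -C(n)|\Rm|\,u$ (the scalar curvature causes no trouble, since $|R|\le C(n)|\Rm|$ pointwise).

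For the second ingredient one needs a Sobolev inequality on subballs of $B_2(p)$ with constant depending only on $n$; there is no volume non-collapsing assumption, but it follows from the hypothesis itself. The scale-invariant smallness of $\int|\Rm|^{n/2}$ on \emph{all} subballs bounds the negative part of the Ricci curvature in a Kato-type class, and by the integral-Ricci-curvature machinery (see \cite{Carron}) this yields volume doubling and an $L^{2}$-Poincar\'e inequality on subballs of $B_2(p)$, hence the required Neumann--Sobolev inequality. The main obstacle in the whole argument is the one flagged above --- getting a usable differential inequality for the \emph{full} curvature tensor out of a condition that only constrains the trace part $\Ric$ --- together with producing this Sobolev inequality without collapsing control; both are resolved by exploiting the critical equation, respectively the Kato-type smallness built into the hypothesis.

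With \eqref{e:plan-cacc} and the Sobolev inequality in hand, one runs the standard Moser iteration on the subsolution $u=|\Rm|$: at each dyadic scale the critical term $\int u\cdot u^{2p}\eta^{2}$ is estimated by H\"older and Sobolev and then absorbed using $r^{n}\fint_{B_{r}}|\Rm|^{n/2}<\delta$ --- which is why the smallness is assumed on all subballs --- and one iterates over $p=(n/(n-2))^{k}$ on shrinking balls. This gives
\begin{align*}
\sup_{B_{r/2}(x)}|\Rm|\ \le\ C(n)\Big(r^{n}\fint_{B_{r}(x)}|\Rm|^{n/2}\Big)^{2/n}r^{-2}\ \le\ C(n)\,\delta^{2/n}\,r^{-2}
\end{align*}
for every $B_{r}(x)\subset B_{3/2}(p)$. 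Covering $B_{1}(p)$ by finitely many balls of a fixed radius $r_{0}<\tfrac12$ and choosing $\delta=\delta(n,\epsilon)$ so small that $C(n)\,\delta^{2/n}r_{0}^{-2}\le\epsilon$ finishes the proof. (One routine technical point: Moser iteration with an unbounded right-hand side is run at all powers simultaneously, or after first producing a local $L^{\infty}$ bound on a slightly larger ball by approximation. Alternatively the whole argument can be phrased as a blow-up contradiction, the same two ingredients reappearing as a mean-value inequality for subsolutions.)
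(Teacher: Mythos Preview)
The paper does not give its own proof of this statement: it is quoted in the preliminaries as a known result of Tian--Viaclovsky and Carron \cite{TiVi,TiVi2,Carron} and is used only as a black box (for instance in Theorem~\ref{t:critical_regularity}). Your sketch follows exactly the Moser-iteration strategy of those references, and the two ingredients you isolate --- the Caccioppoli inequality for $|\Rm|$ obtained by feeding $\Delta\Ric=\Rm\ast\Ric$ back into the Weitzenb\"ock formula, and a scale-invariant Sobolev inequality on subballs --- are precisely the ones used there. So there is nothing in the paper to compare against; your proposal is in line with the literature the paper cites.

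One comment on the point you yourself flag as delicate: obtaining the Sobolev/Poincar\'e inequality \emph{without} a non-collapsing hypothesis is where Carron's argument \cite{Carron} genuinely goes beyond \cite{TiVi,TiVi2}, which work under a Sobolev-constant bound. Your appeal to the Kato/integral-Ricci machinery is the correct pointer, but this step carries most of the analytic weight and is not a short consequence of the hypothesis; a reader of your sketch would have to go to \cite{Carron} (or to \cite{DWZ}) to fill it in. Everything else in the outline is routine once those two ingredients are in place.
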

\begin{remark}
The constant $\delta(n,\epsilon)$ may depend on the coefficients of equation \eqref{e:critical_metric}.	
\end{remark}
\begin{remark}
If we have Ricci curvature lower bound $\Ric\ge -(n-1)g$, by the volume comparison we only need to assume $\fint_{B_2(p)}|\Rm|^{\frac{n}{2}}\le \delta$ in Theorem \ref{t:eps_regularity_critical_metric}.
\end{remark}

\subsection{Shrinking Ricci solitons}
In this subsection, we recall some estimates of shrinking Ricci solitons which play a role in the $\epsilon$-regularity theorem. The following estimates were proved in \cite{Ch,CaZh,HaMu}.
\begin{lemma}\label{l:scalarbounds}
	Let $(M^n,g,f,p)$ be a gradient shrinking Ricci soliton \eqref{e:soliton} with normalization such that $f(p)=\min f=0$ and $|\lambda|\le 1$. Then the scalar curvature $R$ and the potential function $f$ satisfy
	\begin{align}
		0\le R(x)&\le d(x,p)^2+1\\
		|\nabla f|^2(x)&\le d(x,p)^2+1.
	\end{align}
\end{lemma}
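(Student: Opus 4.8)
The plan is to establish the three estimates in sequence, starting from the soliton identities. First I would record the standard consequences of \eqref{e:soliton}: tracing gives $R+\Delta f=n\lambda$, and contracting the second Bianchi identity yields $\nabla R=2\Ric(\nabla f,\cdot)$, hence the Hamilton-type identity
\begin{align}\label{e:hamilton_identity}
R+|\nabla f|^2-2\lambda f=\text{const}.
\end{align}
Evaluating \eqref{e:hamilton_identity} at the point $p$ where $f$ attains its minimum (so $\nabla f(p)=0$ and $f(p)=0$) shows the constant equals $R(p)$. The first task is then to prove $R\ge 0$: this is Chen's theorem, which follows by applying the maximum principle to the evolution inequality $\Delta_f R=\lambda R-2|\Ric|^2\le \lambda R-\tfrac{2}{n}R^2$ satisfied by the scalar curvature together with a cutoff/completeness argument; I would simply cite \cite{Ch}. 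Combined with \eqref{e:hamilton_identity} this already gives $|\nabla f|^2\le 2\lambda f$ and $R\le 2\lambda f+R(p)$, so everything is reduced to an upper bound on $f$ of the form $f(x)\lesssim d(x,p)^2+1$ (normalizing $\lambda\le 1$).

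The heart of the argument is therefore the quadratic growth estimate for the potential function. The key point, due to Cao–Zhu \cite{CaZh} (with a related estimate in \cite{HaMu}), is that $f$ is comparable to $\tfrac{1}{4}d(\cdot,p)^2$ up to lower-order terms. I would reproduce the upper bound: since $|\nabla f|\le\sqrt{2\lambda f}$, along a minimal geodesic $\gamma$ from $p$ to $x$ parametrized by arclength one gets $\tfrac{d}{ds}\sqrt{f(\gamma(s))}\le \tfrac12\sqrt{2\lambda}$, and integrating from $0$ to $d(x,p)$ yields $\sqrt{f(x)}\le \tfrac12\sqrt{2\lambda}\,d(x,p)$, i.e. $f(x)\le \tfrac{\lambda}{2}d(x,p)^2$. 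Feeding this back into \eqref{e:hamilton_identity}: $R(x)\le R(p)+2\lambda f(x)\le R(p)+\lambda^2 d(x,p)^2$, and $|\nabla f|^2(x)\le 2\lambda f(x)\le \lambda^2 d(x,p)^2$. It remains to bound $R(p)$ by a universal constant; for the normalized soliton with $|\lambda|\le 1$ this follows from a Harnack/maximum-principle argument on a small ball around $p$ (or again from \cite{CaZh,HaMu}), after which the stated bounds $R(x)\le d(x,p)^2+1$ and $|\nabla f|^2(x)\le d(x,p)^2+1$ drop out with $|\lambda|\le 1$, possibly after absorbing constants.

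The main obstacle is the completeness/noncompactness input needed twice: once to justify $R\ge 0$ via the maximum principle on a (possibly noncompact) soliton, and once to control $R(p)$, i.e. to rule out a large curvature concentration at the minimum of $f$. Both are handled in \cite{Ch,CaZh,HaMu} by weighted-volume comparison (Wei–Wylie \cite{WeWy}) applied to the $f$-Laplacian, which gives the needed a priori control; since these are exactly the references cited before the lemma, I would invoke them rather than redo the estimates. Everything else is the elementary integration of the gradient bound along geodesics sketched above.
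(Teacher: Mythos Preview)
The paper does not actually prove this lemma: it simply records it as a known consequence of \cite{Ch,CaZh,HaMu}, with a one-line remark that $R\ge 0$ is Chen's result. Your proposal amounts to unpacking the argument contained in those same references, so in spirit there is no difference.

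That said, your sketch has two small slips worth flagging. First, from the Hamilton identity $R+|\nabla f|^2-2\lambda f=R(p)$ together with $R\ge 0$ you only get $|\nabla f|^2\le 2\lambda f+R(p)$, not $|\nabla f|^2\le 2\lambda f$; the constant $R(p)$ must be carried through the ODE for $\sqrt{f+c}$ rather than $\sqrt{f}$, though the quadratic growth conclusion is unaffected. Second, bounding $R(p)$ is much easier than you suggest: since $p$ is a minimum of $f$ we have $\Delta f(p)\ge 0$, and the traced soliton equation $R+\Delta f=n\lambda$ gives $R(p)\le n\lambda\le n$ immediately---no Harnack or maximum-principle argument is needed. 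With these two fixes your outline reproduces exactly the standard proof in \cite{CaZh,HaMu}.
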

\begin{remark}
The nonnegative scalar curvature bound was proved in \cite{Ch}. 	
\end{remark}

\subsection{Sobolev constant estimates}
In this subsection, let us recall the Sobolev constant estimate in \cite{An} which will be used several times in our proof when we apply Moser iteration.
\begin{lemma}\label{l:Sobolev}
	Let $(M^n,g,p)$ satisfy $\Ric\ge -(n-1)$ on $B_2(p)$. Then the following Sobolev inequality holds
	\begin{align}\label{e:Sobolev_An}
		\left(\fint_{B_r(p)}|u|^{\frac{2n}{n-2}}\right)^{\frac{n-2}{n}}\le C(n)r^2\fint_{B_r(p)}|\nabla u|^2,
	\end{align}
	for any $u\in C^\infty_0(B_r(p))$ with $r\le 1$.
\end{lemma}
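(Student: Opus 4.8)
The plan is to follow \cite{An}: rescale to unit radius, establish the $L^1$ (isoperimetric) form of the inequality from the Ricci lower bound, and then bootstrap to the exponent $\tfrac{2n}{n-2}$ by the usual power trick.

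\textbf{Step 1: reduction to $r=1$.} First I would rescale: put $\tilde g=r^{-2}g$. Since $\Ric$ is scale invariant as a $(0,2)$-tensor and $r\le1$, the hypothesis becomes $\Ric_{\tilde g}\ge-(n-1)r^2\,\tilde g\ge-(n-1)\,\tilde g$ on $\tilde B_2(p)\subseteq B_2(p)$, while $B_r(p)$ becomes the unit ball. Every average $\fint$ is scale invariant (it is a ratio), and $|\nabla^{\tilde g}u|^2_{\tilde g}=r^2|\nabla^{g}u|^2_{g}$; hence it suffices to treat $r=1$, and undoing the rescaling reproduces the factor $r^2$ in \eqref{e:Sobolev_An} automatically. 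So the goal becomes: if $\Ric\ge-(n-1)$ on $B_2(p)$ and $u\in C_0^\infty(B_1(p))$, then $\big(\fint_{B_1(p)}|u|^{2n/(n-2)}\big)^{(n-2)/n}\le C(n)\fint_{B_1(p)}|\nabla u|^2$.

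\textbf{Step 2: the $L^1$-Sobolev inequality on $B_1(p)$.} The geometric core is the bound
\begin{align}
	\left(\fint_{B_1(p)}|u|^{\frac{n}{n-1}}\right)^{\frac{n-1}{n}}\le C(n)\fint_{B_1(p)}|\nabla u|\qquad\text{for all } u\in C_0^\infty(B_1(p)),
\end{align}
which, via the coarea formula applied to the super-level sets $\{|u|>t\}\Subset B_1(p)$, is equivalent to the isoperimetric inequality $\Vol(\Omega)^{\frac{n-1}{n}}\le C(n)\Vol(B_1(p))^{-1/n}\mathcal H^{n-1}(\partial\Omega)$ for smooth $\Omega\Subset B_1(p)$. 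Under $\Ric\ge-(n-1)$ on $B_2(p)$ with $r\le1$, Bishop--Gromov comparison supplies both volume doubling with a purely dimensional constant and the upper bound $\Vol(B_t(x))\le C(n)t^n$ for $t\le2$; combined with the $L^1$ Neumann--Poincar\'e inequality $\fint_{B_s(x)}|v-\bar v_{B_s(x)}|\le C(n)s\fint_{B_{2s}(x)}|\nabla v|$ (valid on balls well inside $B_2(p)$ under a Ricci lower bound), the standard mechanism — truncation at dyadic heights together with a Whitney-type covering of $\mathrm{supp}\,u$ — upgrades these to the displayed $L^1$-Sobolev inequality with the sharp exponent $\tfrac{n}{n-1}$. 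I regard this step as the main obstacle: it is the only place where the geometry genuinely enters, and the delicate point is that a Ricci \emph{lower} bound gives no lower bound on $\Vol(B_1(p))$, so one must keep every integral in averaged form for the constants to stay dimensional even in the collapsed regime — exactly the feature that makes Lemma \ref{l:Sobolev} useful.

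\textbf{Step 3: bootstrap.} Finally I would apply the $L^1$-Sobolev inequality to $v=|u|^{\gamma}$ with $\gamma=\tfrac{2(n-1)}{n-2}$. Since $\tfrac{\gamma n}{n-1}=2(\gamma-1)=\tfrac{2n}{n-2}$, the chain rule together with the Cauchy--Schwarz inequality give
\begin{align}
	\left(\fint_{B_1(p)}|u|^{\frac{2n}{n-2}}\right)^{\frac{n-1}{n}}\le C(n)\gamma\left(\fint_{B_1(p)}|u|^{\frac{2n}{n-2}}\right)^{\frac12}\left(\fint_{B_1(p)}|\nabla u|^2\right)^{\frac12}.
\end{align}
Assuming $u\not\equiv0$, dividing by $\big(\fint_{B_1(p)}|u|^{2n/(n-2)}\big)^{1/2}$ and using $\tfrac{n-1}{n}-\tfrac12=\tfrac{n-2}{2n}$ yields $\big(\fint_{B_1(p)}|u|^{2n/(n-2)}\big)^{(n-2)/n}\le\big(C(n)\gamma\big)^2\fint_{B_1(p)}|\nabla u|^2$, which is the $r=1$ case; the case $u\equiv0$ is trivial, and undoing the rescaling of Step 1 gives \eqref{e:Sobolev_An} in general.
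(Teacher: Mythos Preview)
Your proposal is correct, and the arithmetic in Step~3 checks out. The route, however, differs from the paper's. You follow Anderson \cite{An} rather literally: rescale, establish the averaged $L^1$-Sobolev (isoperimetric) inequality from doubling plus a $(1,1)$-Poincar\'e, then bootstrap to the critical exponent via the power trick $v=|u|^{2(n-1)/(n-2)}$. The paper instead derives both a Dirichlet and a weak Neumann $(2,2)$-Poincar\'e inequality from Cheeger--Colding's segment inequality, invokes Haj\l asz--Koskela \cite{HaKo} to promote the Neumann Poincar\'e (together with doubling) directly to the Neumann Sobolev inequality at the critical exponent $\tfrac{2n}{n-2}$, and finally adds back the Dirichlet Poincar\'e to obtain \eqref{e:Sobolev_An} for compactly supported $u$. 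The underlying geometric inputs are identical---Bishop--Gromov doubling and a local Poincar\'e under $\Ric\ge -(n-1)$---so the two arguments are essentially repackagings of one another; your isoperimetric route is perhaps more self-contained, while the paper's route makes the dependence on the segment inequality and the Haj\l asz--Koskela machinery explicit and yields the Neumann form as a byproduct.
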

\vspace{.3cm}
\begin{remark}
	In \cite{An}, Anderson proved an isoperimetric constant bound which gives us the Sobolev constant estimate. Recently, Dai-Wei-Zhang \cite{DWZ} generalized such isoperimetric constant estimate from lower Ricci curvature to $L^p$ Ricci curvature lower bound case for $p>n/2$.
\end{remark}

\begin{proof}
Let us simply outline a proof for Lemma \ref{l:Sobolev}.  By using Cheeger-Colding's Segment inequality \cite{ChCo,Ch,ChCo1}, we can get a Dirichlet Poincar\'e inequality \cite{Ch} and a weak Neumann type Poincar\'e inequality \cite{ChCo1}
\begin{enumerate}
	\item Dirichlet: $\fint_{B_r(p)}|u|^2\le C(n)r^2\fint_{B_r(p)}|\nabla u|^2$ for all $u\in C^\infty_0(B_r(p))$ with $r\le 1$.
	\item Neumann: $\fint_{B_r(p)}\Big| u-\fint_{B_r(p)}u \Big|^2\le C(n)r^2\fint_{B_{2r}(p)}|\nabla u|^2$ for all $u\in B_{2r}(p)$ with $r\le 1/2$.
\end{enumerate}
One can now follow the proof in \cite{HaKo} to get a Neumann type Sobolev inequality:
\begin{align}
	\left(\fint_{B_r(p)}\Big| u-\fint_{B_r(p)}u \Big|^{\frac{2n}{n-2}}\right)^{\frac{n-2}{n}}\le C(n)r^2\fint_{B_{r}(p)}|\nabla u|^2, \mbox{   for all $u\in C^\infty(B_r(p))$ with $r\le 1$.  }
\end{align}
 Combining with Dirichlet Poincar\'e inequality we get the desired Sobolev inequality \eqref{e:Sobolev_An}.
\end{proof}
\vspace{.4cm}

\section{$\epsilon$-regularity theorem of Shrinking Ricci Soliton}\label{s:soliton}
The main goal of this section is to prove the $\epsilon$-regularity theorem \ref{t:regularity} for shrinking soliton.

\subsection{$\epsilon$-regularity with small average $L^2$ curvature}\label{ss:eps_average_shrinking}
The following $\epsilon$-regularity with average $L^2$ curvature bound is a standard application of Moser iteration as \cite{An}. One can also prove this by using the idea in \cite{Carron,GeJi,Sze} without involving Moser iteration.
\begin{lemma}[$\epsilon$-regularity with average energy small]\label{l:eps_average_energy}
There exist universal constants $\epsilon$ and $C$ such that if $(M^4,g,f)$ is a gradient  Ricci soliton \eqref{e:soliton} with  $|\lambda|\le 1$, and if for some $r\le 1$ there holds $r^4\fint_{B_r(x)}|\Rm|^2\le \epsilon$ and  $r\sup_{B_r(x)}|\nabla f|\le 1$, then
\begin{align}\label{e:average_curvature}
	\sup_{y\in B_{r/2}(x)}|\Rm|^2(y)\le C\fint_{B_r(x)}|\Rm|^2.
\end{align}
\end{lemma}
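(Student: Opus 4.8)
The plan is to run a Moser iteration on the quantity $|\Rm|$, using the fact that a gradient shrinking Ricci soliton with $r\sup_{B_r(x)}|\nabla f|\le 1$ behaves, on the ball $B_r(x)$, like a manifold with bounded Bakry--Emery Ricci curvature and hence enjoys a scale-invariant Sobolev inequality as in Lemma \ref{l:Sobolev}. First I would reduce to the unit scale $r=1$ by the parabolic-type rescaling $g\mapsto r^{-2}g$, under which \eqref{e:soliton} is preserved with $\tilde\lambda=r^2\lambda$ (so still $|\tilde\lambda|\le 1$) and $|\nabla f|$ scales so that $\sup_{B_1}|\nabla f|\le 1$; note $\Ric=\lambda g-\nabla^2 f$ and $r\sup_{B_r}|\nabla f|\le 1$ together with Lemma \ref{l:scalarbounds}-type bounds give $|\Ric|\le C$ on $B_1(x)$, so in particular $\Ric\ge -(n-1)g$ there and Lemma \ref{l:Sobolev} applies. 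Next I would derive the key differential inequality: on a shrinking soliton the Riemann tensor satisfies an elliptic equation of the schematic form $\Delta_f\Rm = \Rm\ast\Rm + c\,\Rm$, where $\Delta_f=\Delta-\nabla f\cdot\nabla$ is the drift Laplacian (this comes from the standard soliton identity $\Delta_f\Rm = 2\lambda\,\Rm + \Rm\ast\Rm$, e.g. via the evolution equation under the associated Ricci flow specialized to a soliton). Consequently $u:=|\Rm|$ satisfies, weakly, the Kato-type inequality $\Delta_f u \ge -C(u^2+u)$ away from the zero set of $\Rm$, with $C$ universal once $|\lambda|\le 1$.

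The core of the argument is then the standard De Giorgi--Nash--Moser iteration for the operator $\Delta_f$, which I would carry out by integrating against test functions $\varphi^2 u^{2p-1}$ and using the drift term harmlessly: integration by parts produces $\int \langle\nabla f,\nabla u\rangle\varphi^2 u^{2p-1}$, which is absorbed because $|\nabla f|\le 1$ on $B_1$ and can be controlled by the gradient term via Young's inequality (alternatively, one works with the weighted measure $e^{-f}\,dvol$, for which $\Delta_f$ is self-adjoint, and observes that on $B_1(x)$ the weight $e^{-f}$ is bounded above and below by universal constants since $|\nabla f|\le 1$ and we may normalize $f$ there, so the weighted and unweighted $L^p$ norms are comparable). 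Feeding the Sobolev inequality \eqref{e:Sobolev_An} into the resulting Caccioppoli estimate and iterating over a sequence of shrinking radii from $r$ down to $r/2$ yields
$$
\sup_{B_{r/2}(x)} u^2 \le C\,\fint_{B_{r}(x)} u^2,
$$
\emph{provided} the smallness hypothesis $r^4\fint_{B_r(x)}|\Rm|^2\le\epsilon$ is in force: this is exactly what is needed to absorb the nonlinear term $\int \varphi^2 u^{2p+1}$ back into the left-hand side at each iteration step (one uses that the scale-invariant $L^2$ norm of $u$ controls, by the iteration itself at the previous scale, the higher integrability needed to handle $u^2\cdot u$ via Hölder plus Sobolev). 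The reader is referred to \cite{An} for the model computation and to \cite{Carron,GeJi,Sze} for the variant that bypasses iteration by a direct ODE/monotonicity argument on $\fint_{B_\rho}|\Rm|^2$.

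The main obstacle, and the only place where genuine care is needed beyond quoting \cite{An}, is the treatment of the drift term $\nabla f$: one must verify that the hypothesis $r\sup_{B_r(x)}|\nabla f|\le 1$ (rather than a bound on $f$ itself) suffices to make the weight $e^{-f}$ comparable to a constant on $B_r(x)$ after the appropriate normalization of the additive constant in $f$, and that the extra first-order term never destabilizes the Caccioppoli inequality — both follow from $|\nabla f|\le r^{-1}$ on a ball of radius $r$, giving oscillation of $f$ at most $2$ there. A secondary technical point is the Kato inequality $|\nabla|\Rm||\le|\nabla\Rm|$ and the handling of the zero set of $\Rm$, which is standard (approximate $|\Rm|$ by $\sqrt{|\Rm|^2+\delta^2}$ and let $\delta\to 0$). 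Everything else is the classical Moser scheme.
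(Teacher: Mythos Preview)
Your Moser-iteration strategy is in principle a valid route---the paper itself remarks that the lemma is ``a standard application of Moser iteration as in \cite{An}''---but there is a genuine gap in your justification of the Sobolev inequality. You write that ``$\Ric=\lambda g-\nabla^2 f$ and $r\sup_{B_r}|\nabla f|\le 1$ together with Lemma~\ref{l:scalarbounds}-type bounds give $|\Ric|\le C$ on $B_1(x)$'', but this is false: a bound on $|\nabla f|$ gives no control on $|\nabla^2 f|$, and Lemma~\ref{l:scalarbounds} bounds only $R$ and $|\nabla f|$, never the full Hessian. Indeed, obtaining a Ricci bound from the small-energy hypothesis is precisely the content of Proposition~\ref{p:boundsRicci}, which \emph{uses} the present lemma as input; if your claim were true that proposition would be vacuous. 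Without $\Ric\ge -(n-1)$, Lemma~\ref{l:Sobolev} as stated does not apply. To salvage your scheme you would need a Bakry--Emery Sobolev inequality (valid under $\Ric_f=\lambda g\ge -g$ together with $|\nabla f|\le 1$), which exists in the literature but is not what the paper provides, and you do not invoke it.

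The paper sidesteps this issue by a point-picking argument in the style of \cite{Carron,Sze} rather than a direct iteration on $|\Rm|$. After using the Wei--Wylie Bakry--Emery volume comparison to propagate the average smallness to all subballs, it maximizes $h(y)=d(y,\partial B_1(x))^2|\Rm|(y)$; at the maximum one has, by construction, $|\Rm|\le s^{-2}$ on some ball $B_s$, hence a Ricci bound there, and \emph{only then} is Moser iteration invoked---inside the auxiliary Lemma~\ref{l:derivative_curvature}---to obtain $|\nabla\Rm|\le \Lambda s^{-3}$. This derivative bound forces $|\Rm|$ to stay of order $s^{-2}$ on a definite subball, contradicting the small average. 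A second point-picking pass then upgrades the constant bound to the average bound \eqref{e:average_curvature}. The organizational point is that Moser iteration is applied only \emph{after} a curvature bound is in hand, so Lemma~\ref{l:Sobolev} applies legitimately; your proposal inverts this order, and the justification breaks at exactly that step.
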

\begin{remark}
This lemma holds for all dimensions by replacing $L^2$ curvature by $L^{n/2}$ curvature.	 For noncollapsing case, one can find a proof in \cite{Zh}, \cite{TiZh}, \cite{HaMu}.  For Einstein manifolds, this was proved by Anderson \cite{An} through controlling the Sobolev constant and using Moser's iteration.  We use the idea of \cite{Carron} to prove such $\epsilon$-regularity.
\end{remark}

\begin{proof}
 By scaling invariant, we may assume $r=1$. For any $y\in B_{1/2}(x)$, since $|\nabla f|$ is bounded, we can use the volume comparison of Bakry-Emery Ricci curvature \cite{WeWy} to conclude that for any $s\le 1/2$
\begin{align}
\frac{\Vol(B_s(y))}{s^4}\ge C \frac{\Vol(B_{2}(y))}{2^{4}}\ge C\Vol(B_1(x)),
\end{align}
for a universal constant $C$. Therefore, we have for any $s\le 1/2$ that
\begin{align}\label{e:subball}
s^4\fint_{B_s(y)}|\Rm|^2\le C^{-1}\frac{1}{\Vol(B_1(x))}\int_{B_r(y)}|\Rm|^2\le C^{-1}\fint_{B_1(x)}|\Rm|^2\le C^{-1}\epsilon.
\end{align}

The proof of \eqref{e:average_curvature} would have two steps. First we will show that the curvature is bounded $\sup_{B_{3/4}(x)}|\Rm|\le 16$. Based on the bounded curvature estimate, later we will improve the curvature bound from a fixed constant to the average integral bound in \eqref{e:average_curvature}.

Let us show the curvature is bounded by a constant.  Consider the function $h(y)=d(y,\partial B_1(x))^2|\Rm|(y)$ on $B_1(x)$. We will show that $\sup_{B_1(x)} h\le 1$ which will give us the curvature bound in $B_{3/4}(x)$.  Assume $h(x_0)=\sup_{B_1(x)}h=A\ge 1$ with $2r_0=d(x_0,\partial B_1(x))>0$. Then on the ball $B_{r_0}(x_0)$, we have $\sup_{B_{r_0}(x_0)}|\Rm|\le Ar_0^{-2}$ and $|\Rm|(x_0)=\frac{A}{4r_0^2}$. Let us consider the ball $B_{s}(x_0)\subset B_{r_0}(x_0)$ such that $Ar_0^{-2}= s^{-2}$. By Lemma \ref{l:derivative_curvature},
we have $\sup_{B_{s/2}(x_0)}|\nabla\Rm|\le \Lambda  s^{-3}$. Combining with the fact that $|\Rm|(x_0)\ge \frac{1}{4s^2}$, we have
\begin{align}
	\inf_{B_{s/10\Lambda}(x_0)}|\Rm|\ge \frac{1}{10s^2}.
\end{align}
Hence
\begin{align}
	\frac{s^4}{10^4\Lambda^4}\fint_{B_{s/10\Lambda}(x_0)}|\Rm|^2\ge \frac{1}{10^6\Lambda^4}.
\end{align}
By choosing $\epsilon$ small, we deduce a contradiction. Thus we have $\sup_{B_1(x)}h\le 1$. In particular, $\sup_{B_{3/4}(x)}|\Rm|\le 16.$

Denote $\mu^2\equiv \fint_{B_{3/4}(x)}|\Rm|^2\le C\epsilon$. Consider $L(y)=d(y,\partial B_{3/4}(x))^2|\Rm|(y)$ on $B_{3/4}(x)$. We will show that $\sup_{B_{3/4}(x)}L(y)\le C\mu$ which will conclude the estimate in Lemma \ref{l:eps_average_energy}.  Assume $L(y_0)\equiv K\equiv \sup_{B_{3/4}(x)} L(y)$ with $d(y_0,\partial B_{3/4}(x))=2s$. Therefore $B_{s}(y_0)\subset B_{3/4}(x)$ with $\sup_{B_s(y_0)}|\Rm|\le K s^{-2}$ and $|\Rm|(y_0)=s^{-2}K/4$ for some $K$. Let us show that $K\le C \mu$.  Indeed, since $\sup_{B_s(y_0)}|\Rm|\le \sup_{B_{3/4}(x)}|\Rm|\le 16\le s^{-2},$ we have by Lemma \ref{l:derivative_curvature} that $\sup_{B_{s/2}(y_0)}|\nabla\Rm|\le \Lambda K s^{-3}$. Thus
\begin{align}
	\inf_{B_{s/10\Lambda}(y_0)}|\Rm|\ge \frac{K}{10s^2}.
\end{align}
Therefore, by \eqref{e:subball} we get
\begin{align}
	\frac{K^2}{10^6\Lambda^4}
\le \frac{s^4}{10^4\Lambda^4}\fint_{B_{s/10\Lambda}(y_0)}|\Rm|^2\le C\mu^2
\end{align}
Hence we have $K\le C \mu$ for a universal constant $C$. Thus by \eqref{e:subball}, we get
\begin{align}
	\sup_{B_{1/2}(x)}|\Rm|^2\le C\mu^2\le C\fint_{B_1(x)}|\Rm|^2.
\end{align}
\end{proof}

\begin{lemma}\label{l:derivative_curvature}
	Let $(M^n,g,f)$ be a gradient  Ricci soliton $\Ric+\nabla^2f=\lambda g$ with $|\lambda|\le 1$. There exists a universal constant $\Lambda(n)$ such that if $\sup_{B_r(x)}|\nabla f|\le r^{-1}$ and $\sup_{B_r(x)}|\Rm|= \mu r^{-2}\le r^{-2}$ for a fixed $r\le 1$, then	
	\begin{align}
		\sup_{B_{r/2}(x)}|\nabla \Rm|\le  \frac{\Lambda\mu}{ r^{3}}.
	\end{align}
\end{lemma}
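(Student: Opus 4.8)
The plan is to derive this from a local elliptic estimate for the curvature tensor, exploiting the fact that on a gradient Ricci soliton the Riemann tensor satisfies a drift-Laplacian equation of Bochner type. Recall that on a gradient shrinking soliton $\Ric + \nabla^2 f = \lambda g$, the evolution/elliptic identity $\Delta_f \Rm := \Delta \Rm - \nabla_{\nabla f}\Rm = 2\lambda \Rm + \Rm \ast \Rm$ holds (this is the soliton version of the Ricci-flow evolution equation for $\Rm$, obtained by contracting the second Bianchi identity and using the soliton equation to replace time derivatives by the drift term $\nabla_{\nabla f}$). Thus, schematically, $\Delta \Rm = \nabla_{\nabla f}\Rm + \Rm\ast\Rm + 2\lambda \Rm$, and similarly $\Delta \nabla\Rm = \nabla_{\nabla f}\nabla\Rm + \Rm\ast\nabla\Rm + (\text{lower order with }\nabla^2 f = \lambda g - \Ric)$ after commuting one derivative through.

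First I would reduce to the scale $r=1$ by parabolic-type scaling $g \mapsto r^{-2} g$ (equivalently, work on $B_1$ after rescaling): under this, $|\Rm| \le \mu \le 1$, $|\nabla f| \le 1$, $|\lambda| \le 1$ are all preserved in the appropriate normalized form, and one only needs $\sup_{B_{1/2}(x)} |\nabla \Rm| \le \Lambda \mu$. The key analytic input is the standard local gradient estimate for solutions of $\Delta u = V\cdot\nabla u + F$ with bounded drift $V = \nabla f$ and a source controlled by $u$ itself; applied to $u = \Rm$ this is a Shi-type estimate. Concretely, set $H = |\nabla \Rm|^2$ and $G = (\text{const})|\Rm|^2 + H$ (the usual Shi combination), compute $\Delta_f G$ using the Bochner identities above together with the Kato inequality and $|\nabla^2 f| = |\lambda g - \Ric| \le 1 + |\Rm|$, to obtain a differential inequality of the form $\Delta_f G \ge c H^2/(\text{something}) - C(\mu)$; then apply the maximum principle to $\phi G$ for a cutoff $\phi$ supported in $B_{3/4}(x)$ with $\phi \equiv 1$ on $B_{1/2}(x)$, absorbing the drift term $\nabla_{\nabla f}(\phi G)$ since $|\nabla f| \le 1$ on $B_1$. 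This yields $\sup_{B_{1/2}} |\nabla \Rm|^2 \le C(\sup_{B_{3/4}}|\Rm|^2 + \sup_{B_{3/4}}|\Rm|) \le C'\mu$, and tracking the homogeneity in $\mu$ carefully (the source is $\Rm\ast\Rm$, quadratic, so on the rescaled picture with $|\Rm|\le\mu\le1$ the gradient bound scales linearly in $\mu$) gives $\sup_{B_{1/2}} |\nabla\Rm| \le \Lambda \mu$; undoing the scaling restores the $r^{-3}$ factor.

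The main obstacle is bookkeeping rather than conceptual: one must make sure the drift term $\nabla_{\nabla f}$ appearing in $\Delta_f$ is genuinely harmless in the maximum principle argument (it is, because $|\nabla f|$ is bounded by $1$ on the unit ball, so it behaves like a bounded first-order term and can be absorbed into the cutoff error terms) and, more delicately, that the final constant depends only on $n$ and comes with the correct power of $\mu$. The latter requires being slightly careful because naively the Shi estimate gives $|\nabla\Rm| \le C(\sup|\Rm| + \sup|\Rm|^{2})^{1/2}$-type bounds; but under the normalization $\mu \le 1$ one has $\mu^2 \le \mu$, and a scaling check (replace $g$ by $\mu^{-1} g$ on the already-rescaled manifold to make $\sup|\Rm| = 1$, noting $|\nabla f|$ and $|\lambda|$ only improve) confirms $\Lambda\mu$ is the right form. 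Alternatively, and perhaps cleaner, one can invoke the elliptic $\epsilon$-regularity / Schauder route: $\Delta \Rm = \nabla_{\nabla f}\Rm + \Rm\ast\Rm + 2\lambda\Rm$ has $C^0$ right-hand side bounded by $C(n)\mu r^{-4}$ on $B_r$ (in rescaled coordinates $C(n)\mu$ on $B_1$), and with bounded geometry coming from $|\Rm| \le \mu \le 1$ and bounded $|\nabla f|$ one gets interior $C^{1,\alpha}$ (indeed $W^{2,p}$) estimates giving $|\nabla \Rm| \le C(n)\mu$ on $B_{1/2}$; I would present the Shi-type barrier argument as the primary proof since it is self-contained and does not require a harmonic-coordinate construction.
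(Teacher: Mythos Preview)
Your strategy is sound and leads to a valid proof, but it differs from the paper's. The paper does not run a Shi-type maximum-principle barrier; instead it uses Moser iteration. Starting from the same drift-Laplacian inequalities
\[
\Delta_f|\Rm|^2 \ge 2|\nabla\Rm|^2 - c(n)|\Rm|^3,\qquad
\Delta_f|\nabla\Rm|^2 \ge 2|\nabla^2\Rm|^2 - c(n)|\Rm|\,|\nabla\Rm|^2,
\]
the paper absorbs the drift via Cauchy--Schwarz (using $|\nabla f|\le 1$), applies Moser iteration to $|\nabla\Rm|^2$ (legitimized by Anderson's collapsed Sobolev inequality, Lemma~\ref{l:Sobolev}) to obtain $\sup_{B_{3/4}}|\nabla\Rm|^2 \le C\fint_{B_{4/5}}|\nabla\Rm|^2$, and then bounds the right-hand side by $C\mu^2$ via integration of the first inequality against a cutoff. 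Your maximum-principle route has the advantage of avoiding the Sobolev input altogether, at the cost of a more delicate barrier computation; the paper's route makes the linear dependence on $\mu$ transparent, since it factors through an $L^2$ bound. One caution on your side: the scaling reduction ``replace $g$ by $\mu^{-1}g$ to make $\sup|\Rm|=1$, noting $|\nabla f|$ and $|\lambda|$ only improve'' does not work as stated---$\mu$ is a scale-invariant quantity, and $|\lambda|$ in fact worsens under that dilation---so you should instead extract the $\mu$ dependence directly from the Shi computation (the local Shi estimate gives $|\nabla\Rm|\le C(Kr^{-1}+K^{3/2})$, which with $K=\mu\le 1$, $r=1$ yields $C\mu$). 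With that correction your argument goes through.
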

\begin{proof}
	One can prove this by using Shi's local curvature estimate. Instead of applying Shi's curvature estimate, we will use soliton equation.  Without loss of generality, let us assume $r=1$. From the soliton equation, we can compute(see \cite{MuWa})
	\begin{align}
		\Delta_{{f}} |{\Rm}|^2\ge 2|\nabla{\Rm}|^2-c(n)|{\Rm}|^3\\
		\Delta_{{f}} |\nabla{\Rm}|^2\ge 2|\nabla^2{\Rm}|^2-c(n)|\nabla {\Rm}|^2\cdot |{\Rm}|,
	\end{align}
	where $\Delta_f= \Delta -\langle \nabla \cdot , \nabla f\rangle$.
	Using Cauchy inequality, we have
		\begin{align}\label{e:deltaRm}
		\Delta |{\Rm}|^2\ge |\nabla{\Rm}|^2-c(n)|{\Rm}|^3-c(n)|\nabla {f}|^2|{\Rm}|^2\\ \label{e:deltanablaRm}
		\Delta |\nabla{\Rm}|^2\ge |\nabla^2{\Rm}|^2-c(n)|\nabla {\Rm}|^2\cdot |{\Rm}|-c(n)|\nabla{f}|^2|\nabla{\Rm}|^2.
	\end{align}
	By the assumption of bounded curvature and $|\nabla{f}|\le 1$, we have
	\begin{align}
		\Delta |\nabla{\Rm}|^2\ge -c(n)|\nabla {\Rm}|^2.
	\end{align}
	By Moser's iteration as in \cite{An}, we get
	\begin{align}\label{e:moser_nabla_curvature}
		\sup_{B_{3/4}}|\nabla {\Rm}|^2\le C(n)\fint_{B_{4/5}}|\nabla{\Rm}|^2.
	\end{align}
	By multiplying a cutoff function to \eqref{e:deltaRm} and integrating by part, we have
	\begin{align}\label{e:L2_curvature_derivative}
		\fint_{B_{4/5}}|\nabla {\Rm}|^2\le C(n)\fint_{B_{1}}|{\Rm}|^2+C(n)\fint_{B_{1}}|{\Rm}|^3\le C(n)\mu^2.
	\end{align}
	Therefore, we conclude from \eqref{e:moser_nabla_curvature} and \eqref{e:L2_curvature_derivative} that $\sup_{B_{3/4}(x)}|\nabla{\Rm}|^2\le C(n)\mu^2$ which finishes the proof.
\end{proof}

\subsection{$\epsilon$-regularity with bounded Ricci curvature}\label{ss:epsilon_bounded_Ricci}
In this subsection, we recall Cheeger-Tian's curvature estimate for manifold with bounded Ricci curvature and prove the following $\epsilon$-regularity.

\begin{theorem}\label{t:boundscurvature_assume_boundedRicc}
	There exist universal constants $\epsilon$ and $C>0$ such that if $(M,g,f)$ is a $4$-dimensional gradient shrinking Ricci soliton \eqref{e:soliton} with $|\lambda|\le 1$, and $\int_{B_1(x)}|\Rm|^2\le \epsilon$, $\sup_{B_1(x)}|\Ric|\le 3$ and $\sup_{B_1(x)}|\nabla f|\le 1$, then we have curvature bound
	\begin{align}
		\sup_{B_{1/2}(x)}|\Rm|\le C.
	\end{align}
\end{theorem}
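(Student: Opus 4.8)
The goal is to deduce a fixed curvature bound $\sup_{B_{1/2}(x)}|\Rm|\le C$ from: (i) small $L^2$ curvature $\int_{B_1(x)}|\Rm|^2\le\epsilon$ (not averaged), (ii) bounded Ricci $|\Ric|\le 3$, and (iii) bounded potential gradient $|\nabla f|\le 1$ on $B_1(x)$. The natural route, following \cite{ChTi05,Li}, is to upgrade the small $L^2$ energy into a small \emph{averaged} $L^2$ energy at every scale, and then invoke the $\epsilon$-regularity with average energy small (Lemma~\ref{l:eps_average_energy}). Concretely, I would first use the Bakry-Emery volume comparison \cite{WeWy} (legitimate since $\Ric_f=\lambda g$ so $\Ric_f$ is bounded, and $|\nabla f|\le 1$, $|\Ric|\le 3$ on $B_1(x)$ give a true Ricci lower bound $\Ric\ge -3$ there) to get that $B_1(x)$ is volume-noncollapsed \emph{relative to itself at all smaller scales}: $\Vol(B_s(y))\ge c\, s^4\Vol(B_1(x))$ for $y\in B_{3/4}(x)$, $s\le 1/4$. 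This is exactly the mechanism already used in the proof of Lemma~\ref{l:eps_average_energy} and in \cite{An}.

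The heart of the matter is the statement — which is Cheeger-Tian's key estimate and is recalled in the excerpt's commented-out outline as ``small $L^2$ curvature integral implies small average $L^2$ curvature integral'' — that under bounded Ricci curvature, smallness of $\int_{B_1(x)}|\Rm|^2$ forces $\sup_{B_r(y)\subset B_{3/4}(x)} r^4\fint_{B_r(y)}|\Rm|^2$ to be small. I would obtain this by the Cheeger-Tian chopping/quantitative-stratification argument: one decomposes $B_{3/4}(x)$ according to the scales at which the geometry is close to (lower-dimensional) metric cones, uses the transformation theorem and Anderson's $\epsilon$-regularity \cite{An} away from the ``bad'' region, and controls the bad region via a $W^{1,1}$-type chopping estimate together with the Chern-Gauss-Bonnet/Euler form comparison in dimension $4$; since $|\Ric|\le 3$, the Gauss-Bonnet integrand differs from $|\Rm|^2$ by a controlled amount and one gets an a priori bound turning the \emph{total} $L^2$ energy into a bound on the defect measure. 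In fact, since the excerpt explicitly says ``we recall the estimate in \cite{ChTi05,Li}'' and ``Once one can bound Ricci curvature, one can deduce Theorem \ref{t:regularity} from \cite{ChTi05,Li}'', the cleanest presentation is: invoke the Cheeger-Tian/Li theorem verbatim to conclude $r^4\fint_{B_r(y)}|\Rm|^2\le\delta(\epsilon)$ for all $B_r(y)\subset B_{3/4}(x)$, with $\delta\to0$ as $\epsilon\to0$.

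Having the scale-invariant average energy small at every point and scale in $B_{3/4}(x)$, I would then apply Lemma~\ref{l:eps_average_energy} (the $\epsilon$-regularity with average energy small) on balls $B_{1/2}(y)$ for $y$ ranging over $B_{1/2}(x)$, after checking its hypotheses: the scaled potential bound $r\sup_{B_r}|\nabla f|\le1$ holds since $|\nabla f|\le 1$ and $r\le 1$, and the averaged energy hypothesis is exactly what the previous step supplies (choose $\epsilon$ small enough that $\delta(\epsilon)$ is below the threshold of Lemma~\ref{l:eps_average_energy}). This yields $\sup_{B_{1/2}(x)}|\Rm|^2\le C\fint_{B_{3/4}(x)}|\Rm|^2\le C'\,\epsilon/\Vol(B_1(x))$, hence a universal bound $\sup_{B_{1/2}(x)}|\Rm|\le C$ — in fact a bound that tends to $0$ with $\epsilon$, but a fixed constant is all that is claimed.

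**Main obstacle.** The essential difficulty is the middle step — passing from \emph{integral} smallness $\int_{B_1(x)}|\Rm|^2\le\epsilon$ to \emph{averaged} smallness at all scales — which is precisely the hard content of \cite{ChTi05} and requires the Cheeger-Gromov chopping theory, the almost-volume-cone/almost-metric-cone rigidity of Cheeger-Colding, and in an essential way the four-dimensional Gauss-Bonnet-Chern formula to bootstrap from a finite total energy to an effective decay of energy across scales. Everything else (Bakry-Emery volume comparison, verifying hypotheses of Lemma~\ref{l:eps_average_energy}, and the final covering argument) is routine; the one bookkeeping point to be careful about is that the Bakry-Emery comparison and Anderson's Sobolev estimate need a genuine Ricci lower bound on $B_1(x)$, which here follows from $|\Ric|\le 3$ directly rather than from the soliton equation.
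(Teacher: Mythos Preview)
Your approach is essentially the paper's: invoke the Cheeger--Tian/Li estimate (recorded here as Theorem~\ref{t:curvatureL2_small}) to convert the small $L^2$ integral into a small \emph{averaged} $L^2$ integral at a definite scale, and then feed this into Lemma~\ref{l:eps_average_energy}. Two small corrections are in order. First, Theorem~\ref{t:curvatureL2_small} as stated only produces \emph{some} scale $s\ge a(\epsilon')$ with $s^4\fint_{B_s(y)}|\Rm|^2\le\epsilon'$, not every scale as you claim; this is already sufficient, and the paper simply applies Lemma~\ref{l:eps_average_energy} at that particular scale to get $|\Rm|(y)\le C\epsilon' s^{-2}\le C\epsilon' a^{-2}$. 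Second, and more importantly, your final chain $\sup_{B_{1/2}(x)}|\Rm|^2\le C\fint_{B_{3/4}(x)}|\Rm|^2\le C'\epsilon/\Vol(B_1(x))$ does \emph{not} yield a universal bound: $\Vol(B_1(x))$ can be arbitrarily small in the collapsed regime, which is exactly the regime the theorem is designed to handle. The universal bound must come from the Cheeger--Tian/Li output itself, namely $\fint_{B_s(y)}|\Rm|^2\le \epsilon' s^{-4}\le \epsilon' a(\epsilon')^{-4}$, not from the crude estimate $\int/\Vol$; once you replace your last inequality by this one, the argument is complete and matches the paper.
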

The proof of Theorem \ref{t:boundscurvature_assume_boundedRicc} relies on the following classical result proved in \cite{Li} for bounded Ricci curvature manifolds which was originally pointed out in \cite{ChTi05}.
\begin{theorem}[\cite{ChTi05,Li}]\label{t:curvatureL2_small}
For any $\epsilon>0$ there exist constants $\delta(\epsilon)$ and $a(\epsilon)$ depending only on $\epsilon$ such that if $(M,g,x)$ is a complete $4$-dimensional manifold with $\sup_{B_r(x)}|\Ric|\le 3$ and $\int_{B_r(x)}|\Rm|^2\le \delta$ for some $r\le 1$, then  we have for some $s\ge a r$ that
\begin{align}
	s^4\fint_{B_s(x)}|\Rm|^2\le \epsilon.
\end{align}

\end{theorem}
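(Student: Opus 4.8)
The plan is to prove the theorem directly, by reformulating the desired ``small average'' conclusion as a lower bound on a normalized volume ratio and then counting the scales at which this can fail. The starting point is the elementary observation that, since $B_s(x)\subset B_r(x)$ for $s\le r$, we have $\int_{B_s(x)}|\Rm|^2\le\int_{B_r(x)}|\Rm|^2\le\delta$, and hence
\begin{align}\label{e:good_via_vol}
 s^4\fint_{B_s(x)}|\Rm|^2=\frac{s^4}{\Vol(B_s(x))}\int_{B_s(x)}|\Rm|^2\le \frac{\delta\,s^4}{\Vol(B_s(x))}.
\end{align}
Thus any scale $s\le r$ with $\Vol(B_s(x))\,s^{-4}\ge \delta/\epsilon$ is automatically ``good'', i.e.\ satisfies $s^4\fint_{B_s(x)}|\Rm|^2\le\epsilon$. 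By Bishop--Gromov volume comparison with $\Ric\ge-3$, the ratio $\Vol(B_s(x))\,s^{-4}$ is, up to a universal factor for $s\le 1$, nonincreasing in $s$, so it is largest at small scales. In particular, in the \emph{noncollapsed} regime where $\Vol(B_r(x))\,r^{-4}\ge v_0$ for a definite $v_0$, choosing $\delta\le \epsilon v_0/C$ makes $s=r$ good and the theorem holds with $a=1$; this is also exactly the case covered directly by Anderson's $\epsilon$-regularity \cite{An}.

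The content of the theorem therefore lies in the \emph{collapsed} regime, where $\Vol(B_s(x))\,s^{-4}$ may remain below $\delta/\epsilon$ for every $s\in[ar,r]$, so that \eqref{e:good_via_vol} becomes too lossy to be useful. Here the key quantitative input I would establish is a scale-counting estimate: there is a constant $N=N(\epsilon)$ such that, whenever $\sup_{B_r(x)}|\Ric|\le 3$ and $\int_{B_r(x)}|\Rm|^2\le \delta_0$ for a fixed small universal $\delta_0$, the number of dyadic scales $s=2^{-k}r\le r$ at which $s^4\fint_{B_s(x)}|\Rm|^2>\epsilon$ is at most $N(\epsilon)$. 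Granting this, one finishes by pigeonhole: take $a=a(\epsilon)=2^{-N(\epsilon)-1}$, so that the interval $[ar,r]$ contains $N(\epsilon)+2$ dyadic scales; since at most $N(\epsilon)$ of them are bad, at least one scale $s\ge ar$ is good, which is the assertion of the theorem.

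The main obstacle is precisely the scale-counting estimate, and this is where I would follow Cheeger--Tian \cite{ChTi05} (reproved in this generality by Li \cite{Li}). The difficulty is structural: in collapse a large scale-invariant \emph{average} $s^4\fint_{B_s}|\Rm|^2$ need not come from a large \emph{absolute} energy $\int_{B_s}|\Rm|^2$, so one cannot simply bound the number of bad scales by the total energy budget $\delta_0$. Instead one must trade curvature concentration against topology. At each bad dyadic scale I would use the chopping theory of \cite{ChGr,CFG} to select, inside the annulus $B_{2s}(x)\setminus B_{s/2}(x)$, hypersurfaces whose second fundamental forms are uniformly controlled, and then apply the $4$-dimensional Chern--Gauss--Bonnet formula to the resulting region. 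Its integrand is a universal combination of $|\Rm|^2$, whose total is $\le\delta_0$, and $|\Ric|^2$, which is bounded and hence contributes at most a multiple of the volume, with all boundary terms tamed by the chopping. This forces each bad scale to consume a definite quantum of a globally bounded quantity---a fixed drop in the normalized volume ratio, or a unit of the Euler characteristic of the chopped annular region---so that only boundedly many bad scales can occur. It is exactly at this step that dimension four is essential, since only then is the Gauss--Bonnet integrand controlled by $|\Rm|^2$ and $|\Ric|$ alone; to close the dichotomy (and to identify the ``non-bad'' scales as ones that are geometrically close to metric cones, where the average energy is small) I would invoke the almost metric cone theorem of Cheeger--Colding \cite{Ch,ChCo}.
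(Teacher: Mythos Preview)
The paper does not prove this theorem; it is stated as a known result from \cite{ChTi05,Li} and invoked as a black box in the proofs of Theorem~\ref{t:boundscurvature_assume_boundedRicc} and Proposition~\ref{p:eps_regularity_Ricci_flow_assume_bounded_Ricci}. There is thus no in-paper argument to compare your proposal against.

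Your outline is, however, a faithful high-level summary of the Cheeger--Tian/Li strategy. The reduction via \eqref{e:good_via_vol} to a collapsed-volume problem, the pigeonhole over dyadic scales, and the identification of the three main tools---chopping theory \cite{ChGr,CFG}, the $4$-dimensional Chern--Gauss--Bonnet formula, and the Cheeger--Colding almost metric cone theorem \cite{Ch,ChCo}---are exactly the ingredients of the original proof, and the paper itself lists these same ingredients in its introduction when discussing Theorem~\ref{t:cheeger-Tian}. The one place your sketch remains genuinely incomplete is the mechanism by which a bad scale ``consumes a definite quantum of a globally bounded quantity.'' In the actual argument, one first observes that a sufficiently collapsed annulus carries an F-structure and hence has vanishing Euler characteristic; the Gauss--Bonnet identity with boundary then equates the annular curvature integral to a boundary transgression term, and these boundary terms telescope across consecutive annular shells. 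The total available drop is controlled by the normalized volume ratio at the outer scale, which is where the uniform bound $N(\epsilon)$ comes from. Your phrasing ``a fixed drop in the normalized volume ratio, or a unit of the Euler characteristic'' conflates two stages of this argument and would need to be disentangled to close the proof; in particular, the Euler characteristic of each chopped annular piece is zero rather than contributing a unit, and it is precisely this vanishing that converts Gauss--Bonnet into a telescoping identity.
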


\vspace{.3cm}
Now we are ready to prove Theorem \ref{t:boundscurvature_assume_boundedRicc}.
\begin{proof}[Proof of Theorem \ref{t:boundscurvature_assume_boundedRicc}]
This is a direct application of Theorem \ref{t:curvatureL2_small} and Lemma \ref{l:eps_average_energy}. It suffices to show for any $y\in B_{1/2}(x)$ that
\begin{align}
	|\Rm|(y)\le C.
\end{align}
Indeed, for any $\epsilon'$ there exists $\epsilon(\epsilon')$ and $a(\epsilon')$ such that by Theorem \ref{t:curvatureL2_small} if $\int_{B_{1/2}(y)}|\Rm|^2\le \int_{B_1(x)}|\Rm|^2\le \epsilon$ we have for some $s\ge a$  that
\begin{align}
	s^4\fint_{B_s(y)}|\Rm|^2\le \epsilon'.
\end{align}
Applying Lemma \ref{l:eps_average_energy}, we have for some universal constant $C$ that
\begin{align}
	|\Rm|(y)\le C\epsilon' s^{-2}\le C\epsilon' a^{-2}.
\end{align}
Since $\epsilon'$ and constant $a$ can be chosen to be universal constants, we finish the whole proof.
\end{proof}

\vspace{.5cm}
\subsection{Ricci curvature estimates}\label{ss:bounds_Ricci_curvature_shrinking}
The main purpose of this subsection is to show that the Ricci curvature is bounded under a small $L^2$ curvature integral. Our main result is the following theorem.
\begin{proposition}\label{p:boundsRicci}
	There exist universal constants $\epsilon$ and $A$ such that if $(M,g,f)$ is a $4$-dimensional gradient shrinking Ricci soliton $\Ric+\nabla^2f=\lambda g$ with $\int_{B_1(x)}|\Rm|^2\le \epsilon$ and $\sup_{B_1(x)}|\nabla f|\le 1$ and $|\lambda|\le 1$, then we have Ricci curvature bound
	\begin{align}
		\sup_{B_{1/2}(x)}|\Ric|\le A.
	\end{align}
\end{proposition}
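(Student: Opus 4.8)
The plan is to bound the Ricci curvature by a bootstrap/continuity argument on its supremum, feeding the scalar-curvature smallness of Lemma~\ref{l:scalarbounds} together with the $\epsilon$-regularity with bounded Ricci curvature (Theorem~\ref{t:boundscurvature_assume_boundedRicc}) into a Moser-type estimate for $|\Ric|$. The starting point is the fact that on a shrinking soliton the Bakry-Emery Laplacian of $|\Ric|^2$ satisfies a Bochner-type inequality of the schematic form $\Delta_f|\Ric|^2 \ge 2|\nabla\Ric|^2 - c(n)|\Rm||\Ric|^2 + 4\lambda|\Ric|^2 - c(n)|\Ric|\,|\Ric|^2$ (the elliptic analogue of the reaction-diffusion equation for $\Ric$ along Ricci flow), and the soliton identity $\Delta_f R = 2\lambda R - 2|\Ric|^2$, which expresses $|\Ric|^2$ in terms of $R$ and its first-order soliton derivatives. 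Since $R$ is controlled by $d(p,x)^2+1$ and hence bounded on $B_1(x)$ by a universal constant (using $d(p,x)\le$ whatever scaling makes $r=1$; more precisely one reduces to a region where the potential gradient is $\le 1$), integrating the $R$-equation against a cutoff gives $\int_{B_{3/4}(x)}|\Ric|^2 \le C$, i.e. an $L^2$ bound on $\Ric$ on a smaller ball. This plays the role of the Sobolev-input in Moser iteration.

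The core step is then: given $\int_{B_1(x)}|\Rm|^2\le\epsilon$ and $\sup_{B_1(x)}|\nabla f|\le 1$, show $\sup_{B_{1/2}(x)}|\Ric|\le A$. I would argue by contradiction via a point-selection (maximum-function) argument exactly parallel to the proof of Lemma~\ref{l:eps_average_energy}: set $A=\sup_{B_{1/2}(x)}|\Ric|$, suppose $A$ is large, pick a near-maximum point $x_0$ with $|\Ric|(x_0)\sim A$, and rescale the metric by $A$ so that the new metric has $\sup|\Ric|=1$ near the basepoint, $|\lambda|$ scaled down, $\sup|\nabla f|$ scaled down below $1$, and — crucially — $\int|\Rm|^2\le\epsilon$ still (scale invariance of $L^2$ curvature in dimension $4$), while the scalar curvature of the rescaled metric satisfies $|R|\le C A^{-1}(d(p,x)^2+1)\to 0$. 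On the rescaled soliton, Theorem~\ref{t:boundscurvature_assume_boundedRicc} (applied with the bounded-Ricci hypothesis now verified by construction, $|\Ric|\le 3$ near $x_0$) yields $\sup|\Rm|\le C_0$ on a fixed ball about the new basepoint, for a universal $C_0$.

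The final ingredient is the quantitative "small scalar curvature and bounded Riemann curvature force small Ricci" estimate — in the soliton setting this is the observation used by Munteanu-Wang, coming from $\Delta_f R = 2\lambda R - 2|\Ric|^2$ combined with elliptic estimates: with $|\Rm|\le C_0$ and $|R|$ small in $C^0$ on a fixed ball, one gets $\sup|\Ric|^2 \le C(n)(\sup|R|)(\sup|\Rm|) + (\text{lower order})$, hence $\sup|\Ric|$ small on a slightly smaller ball. But by the rescaling $|\Ric|(x_0)\sim 1$, and since $\sup|R|\to 0$ as $A\to\infty$ while $\sup|\Rm|\le C_0$, this forces $1\lesssim C C_0 A^{-1}$, i.e. $A\le C C_0$, a universal bound — contradiction once $A$ is large. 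One must be slightly careful that the rescaled balls on which Theorem~\ref{t:boundscurvature_assume_boundedRicc} applies genuinely contain a fixed-size neighbourhood of $x_0$, and that $x_0\in B_{1/2}(x)$ can be assumed to have $d(x_0,\partial B_1(x))$ bounded below; this is handled by the usual trick of working with the weighted quantity $d(y,\partial B_1(x))^2|\Ric|(y)$ rather than $|\Ric|$ itself, as in Lemma~\ref{l:eps_average_energy}.

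The main obstacle I anticipate is organizing the scalings so that \emph{all three} hypotheses of Theorem~\ref{t:boundscurvature_assume_boundedRicc} ($L^2$ curvature $\le\epsilon$, $|\Ric|\le 3$, $|\nabla f|\le 1$) hold simultaneously on the rescaled ball while the scalar curvature is made quantitatively small — the potential gradient bound and the scalar bound both degrade with $d(p,x_0)$, so one needs the statement to be local around $x$ with constants independent of $d(p,x)$, which is precisely why the theorem is stated for $B_1(x)$ with the $|\nabla f|\le 1$ hypothesis imposed rather than derived. The other delicate point is establishing the Munteanu-Wang-type estimate $|\Ric|^2\lesssim |R|\,|\Rm| + \text{l.o.t.}$ with the correct dependence; the cleanest route is to run Moser iteration on $|\Ric|^2$ using \eqref{e:deltaRm}-style inequalities with the $+4\lambda|\Ric|^2$ and $-2|\Ric|^2$ reaction terms absorbed, treating $R$ (which is $L^\infty$-small after rescaling) as the only obstruction to $\Ric\equiv 0$.
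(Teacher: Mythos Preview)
Your approach is essentially the paper's: point-select on the weighted quantity $d(y,\partial B_1(x))^2|\Ric|(y)$, rescale so that $|\Ric|\le 1$ near the chosen point $x_0$, apply Theorem~\ref{t:boundscurvature_assume_boundedRicc} to bound $|\Rm|$ by a universal $C_0$, and then invoke a ``bounded $|\Rm|$ plus a small scaling parameter forces small $|\Ric|$'' estimate to contradict $|\Ric|(x_0)\sim 1$.

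The one place where your sketch does not quite close is the form of that last estimate. You propose $\sup|\Ric|^2\lesssim(\sup|R|)(\sup|\Rm|)$ and feed in $|R|\le d(p,x)^2+1$ from Lemma~\ref{l:scalarbounds}; after rescaling this makes $\tilde R$ small only up to a factor of $d(p,x)^2+1$, so the resulting bound on $A$ depends on $d(p,x)$ rather than being universal. You correctly flag this as the main obstacle in your final paragraph, but you do not say how to remove it. The paper's fix is Lemma~\ref{l:boundsRiccibyscalar}, which under $|\Rm|\le 1$ proves
\[
\sup_{B_{1/2}}|\Ric|^2\le C(n)\Bigl(\sup_{B_1}|\nabla f|+\lambda\Bigr),
\]
with no appearance of $R$ or $d(p,x)$ on the right. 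The mechanism is exactly the Moser-plus-integrated-$\Delta_f R$ argument you outline, but at the step $\fint|\Ric|^2\lesssim \fint R+(\text{boundary})$ one substitutes $R=n\lambda-\Delta f$ and integrates by parts repeatedly so that only $|\nabla f|$ and $\lambda$ survive. After the rescaling $\tilde g=(N^2/r_0^2)g$, both $\sup|\tilde\nabla f|\le r_0/N$ and $\tilde\lambda\le r_0^2/N^2$ are small \emph{directly from the hypotheses} $|\nabla f|\le 1$, $|\lambda|\le 1$, with no reference to Lemma~\ref{l:scalarbounds}; this is what yields the universal constant.
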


Let us begin with the following lemma which is the key ingredient to approach the Ricci curvature estimate; see also \cite{Wa}. It derseves to point out that this is the main difference bewteen critical metrics and solitons, since we will not have similar estimate for critical metrics.
\begin{lemma}\label{l:boundsRiccibyscalar}
	Let $(M,g,f)$ be an $n$ dimensional gradient shrinking Ricci soliton $\Ric+\nabla^2f=\lambda g$ with $0<\lambda\le 1$. If $\sup_{B_1(x)}|\Rm|\le 1$ then there exists a universal constant $C(n)$ such that
	\begin{align}\label{e:Ricci_bounds_flambda}
		\sup_{B_{1/2}(x)}|\Ric|^2\le C(n)\sup_{B_1(x)}|\nabla f|+C(n)\lambda.
	\end{align}
\end{lemma}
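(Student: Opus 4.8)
The plan is to trade the pointwise bound for an $L^2$ estimate, the first ingredient being a Bochner identity special to the soliton. Writing $\Delta_f=\Delta-\langle\nabla f,\nabla\cdot\rangle$, the Bakry--\'Emery Bochner formula applied to $f$, together with $\Ric_f:=\Ric+\Hess f=\lambda g$ and the standard soliton identities $\Delta f=n\lambda-R$ and $\nabla R=2\Ric(\nabla f)$ (so that $\nabla(\Delta_f f)=-2\Ric_f(\nabla f)=-2\lambda\nabla f$), yields $\tfrac12\Delta_f|\nabla f|^2=|\Hess f|^2-\lambda|\nabla f|^2$. Since $\Hess f=\lambda g-\Ric$ we have $|\Hess f|^2=|\Ric|^2-2\lambda R+n\lambda^2$, and therefore
\[
|\Ric|^2=\tfrac12\Delta_f|\nabla f|^2+2\lambda R+\lambda|\nabla f|^2-n\lambda^2 .
\]
This already exhibits $|\Ric|^2$ as a zero-average term $\tfrac12\Delta_f|\nabla f|^2$ plus quantities controlled by $\lambda$ and $|\nabla f|$.

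For the $L^2$ bound, multiply the identity by a cutoff $\phi$ equal to $1$ on $B_{3/4}(x)$, supported in $B_{7/8}(x)$, with $|\nabla\phi|+|\Delta\phi|\le C$, and integrate. Integration by parts gives $\int\phi\,\Delta_f|\nabla f|^2=\int|\nabla f|^2\big(\Delta\phi+\langle\nabla\phi,\nabla f\rangle+\phi\,\Delta f\big)$; using $|\Rm|\le1$ on $B_1(x)$ (so $0\le R\le C(n)$ and $|\Delta f|=|n\lambda-R|\le C(n)$), the nonnegativity of $R$, and the volume comparison $\Vol(B_{7/8})\le C(n)\Vol(B_{1/2})$ available from $\Ric\ge -C(n)$, one obtains
\[
\fint_{B_{3/4}(x)}|\Ric|^2\le C(n)\Big(\lambda+\sup_{B_1(x)}|\nabla f|+\sup_{B_1(x)}|\nabla f|^2+\sup_{B_1(x)}|\nabla f|^3\Big),
\]
which, in the regime $\sup_{B_1}|\nabla f|\le1$ relevant to Proposition~\ref{p:boundsRicci} and Lemma~\ref{l:eps_average_energy}, reduces to $C(n)\big(\sup_{B_1(x)}|\nabla f|+\lambda\big)$.

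To upgrade to a sup bound, use the soliton Ricci identity $\Delta_f R_{ij}=2\lambda R_{ij}-2R_{ikjl}R_{kl}$ (see \cite{MuWa}), which gives $\Delta_f|\Ric|^2=2|\nabla\Ric|^2+4\lambda|\Ric|^2-4R_{ij}R_{ikjl}R_{kl}\ge -C(n)|\Ric|^2$ on $B_1(x)$ once $|\Rm|\le1$. Rewriting this as $\Delta|\Ric|^2\ge\langle\nabla f,\nabla|\Ric|^2\rangle-C(n)|\Ric|^2$ and running Moser iteration --- the Sobolev inequality being furnished by $\Ric\ge -C(n)$ and Lemma~\ref{l:Sobolev} after a harmless rescaling --- yields $\sup_{B_{1/2}(x)}|\Ric|^2\le C(n)\fint_{B_{3/4}(x)}|\Ric|^2$, and combining with the previous step finishes the proof. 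The only nonroutine point is the first-order drift term $\langle\nabla f,\nabla\cdot\rangle$: in the $L^2$ step it is responsible for the higher powers of $\sup|\nabla f|$ above, and in the Moser iteration it must be absorbed, which is where control on $|\nabla f|$ (equivalently, the weighted geometry of the soliton, with $\Ric_f=\lambda g\ge 0$) enters; alternatively one may iterate $\Delta_f$ directly against the weighted measure $e^{-f}\,dV$ using the weighted Sobolev inequality coming from Wei--Wylie volume comparison, avoiding the passage to the unweighted equation.
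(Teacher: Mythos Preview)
Your proof is correct and follows the same overall architecture as the paper: reduce to $\sup_{B_1}|\nabla f|\le 1$ (else $|\Ric|^2\le C(n)|\Rm|^2\le C(n)\le C(n)\sup|\nabla f|$ trivially), establish an $L^2$ bound on $|\Ric|^2$, then Moser iterate the subsolution inequality for $|\Ric|^2$. The Moser step is identical to the paper's.

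Where you genuinely diverge is in the $L^2$ step. The paper starts from the scalar curvature equation $\Delta_f R=R-2|\Ric|^2$, substitutes $R=n\lambda-\Delta f$, and performs a fairly long weighted integration by parts against $\varphi^2 e^{-f}$; this requires a cutoff with $|\nabla\varphi|+|\nabla^2\varphi|+|\nabla\Delta\varphi|\le c(n)$, hence an appeal to the curvature derivative estimate of Lemma~\ref{l:derivative_curvature}. Your route via the Bochner identity $\tfrac12\Delta_f|\nabla f|^2=|\Hess f|^2-\lambda|\nabla f|^2$, rewritten as $|\Ric|^2=\tfrac12\Delta_f|\nabla f|^2+2\lambda R+\lambda|\nabla f|^2-n\lambda^2$, is more direct: a single integration by parts against $\phi$ suffices, and you only need a Cheeger--Colding cutoff with $|\nabla\phi|+|\Delta\phi|\le C$, available already from the Ricci lower bound without invoking Lemma~\ref{l:derivative_curvature}. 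The price is that your raw bound involves $\sup|\nabla f|^2$ and $\sup|\nabla f|^3$ rather than being manifestly linear, but as you note this is immaterial once one has reduced to $\sup|\nabla f|\le 1$ --- and the paper's computation implicitly uses the same reduction in the same way. So your argument is a modest but genuine simplification of the $L^2$ step.
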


\begin{proof}
Let us assume $\sup_{B_1(x)}|\nabla f|\le 1$, otherwise, the Ricci curvature bound \eqref{e:Ricci_bounds_flambda} holds trivially by using the Riemann curvature bound. By a direct computation using soliton equation as in \cite{MuWa}, we have
\begin{align}\label{e:deltascalar}
	\Delta_fR &=R-2|\Ric|^2\\
	\Delta_f |\Ric|^2&\ge 2|\nabla \Ric|^2+2|\Ric|^2-c(n)|\Rm|\cdot |\Ric|^2,
\end{align}
where $\Delta_f= \Delta -\langle \nabla \cdot , \nabla f\rangle$.
By the bounded assumption for $|\nabla f|$ and $|\Rm|$ and Cauchy inequality, we have on $B_1(x)$ that
\begin{align}
	\Delta |\Ric|^2\ge |\nabla\Ric|^2-c(n)|\Ric|^2.
\end{align}
Hence, by Moser's iteration which is valid due to the Sobolev constant estimate in Lemma \ref{l:Sobolev}, we get
\begin{align}\label{e:Ricci_infinitybounds}
	\sup_{B_{1/2}(x)}|\Ric|^2\le c(n)\fint_{B_{3/4}(x)}|\Ric|^2.
\end{align}
To conclude the estimate \eqref{e:Ricci_bounds_flambda}, it will suffice to bound the $L^2$ Ricci curvature in \eqref{e:Ricci_infinitybounds}. Actully, since $\sup_{B_1(x)}|\nabla f|\le 1$, by normalizing $f$ such that $f(x)=0$ which doesn't affect the estimate \eqref{e:Ricci_bounds_flambda}, we have $\sup_{B_1(x)}|f|\le 1$.

Choose a cutoff function $\varphi$ with $\varphi\equiv 1$ on $B_{3/4}(x)$ and $\varphi\equiv 0$ away from $B_1(x)$ such that $|\nabla \varphi|+|\nabla^2 \varphi|+|\nabla\Delta\varphi|\le c(n)$, which is possible since we have curvature bound and curvature derivative bound in Lemma \ref{l:derivative_curvature}.   On the other hand, by soliton equation $\Ric+\nabla^2f=\lambda g$, we have $R+\Delta f=n\lambda$. Multiplying $\varphi^2$ to \eqref{e:deltascalar} and integrating by parts with weight $e^{-f}$, we have by using $\sup_{B_1(x)}|f|\le 1$ and $\sup_{B_1(x)}|\nabla f|\le 1$ that
\begin{align}
	2\fint_{B_1(x)}|\Ric|^2\varphi^2e^{-f}&=\fint_{B_1(x)}\varphi^2R e^{-f}-\fint_{B_1(x)}\varphi^2\Delta_f R e^{-f}\\
	&\le c(n)\lambda-\fint_{B_1(x)}\varphi^2\Delta fe^{-f}+\fint_{B_1(x)}\varphi^2\Delta_f(\Delta f)e^{-f}\\
	&\le c(n)\lambda-\fint_{B_1(x)}\varphi^2|\nabla f|^2e^{-f}+\fint_{B_1(x)}\langle \nabla \varphi^2,\nabla f\rangle e^{-f}+\fint_{B_1(x)}\Delta_f\varphi^2 \Delta fe^{-f}\\
	&\le c(n)\lambda+c(n)\sup_{B_1(x)}|\nabla f|+\fint_{B_1(x)}\Delta_f\varphi^2 |\nabla f|^2e^{-f}-\fint_{B_1(x)}\langle \nabla \Delta_f\varphi^2,\nabla f\rangle e^{-f}\\
	&\le c(n)\lambda +c(n)\sup_{B_1(x)}|\nabla f|-\fint_{B_1(x)}\langle\nabla \Delta\varphi^2,\nabla f\rangle e^{-f}+\fint_{B_1(x)}\Big\langle \nabla\langle \nabla f,\nabla \varphi^2\rangle,\nabla f\Big\rangle e^{-f}.\\
	&\le c(n)\lambda +c(n)\sup_{B_1(x)}|\nabla f|+\fint_{B_1(x)}\nabla^2f(\nabla\varphi^2,\nabla f)e^{-f}
	\end{align}
	Using soliton equation $\nabla^2f=-Ric+\lambda g$, we have
	\begin{align}
	2\fint_{B_1(x)}|\Ric|^2\varphi^2e^{-f}&\le c(n)\lambda +c(n)\sup_{B_1(x)}|\nabla f|-\fint_{B_1(x)}\Ric(\nabla\varphi^2,\nabla f)e^{-f}\\
	&\le c(n)\lambda +c(n)\sup_{B_1(x)}|\nabla f|+2\fint_{B_1(x)}\varphi|\Ric|\cdot |\nabla\varphi|\cdot |\nabla f|e^{-f}\\
	&\le c(n)\lambda +c(n)\sup_{B_1(x)}|\nabla f|+\fint_{B_1(x)}\varphi^2|\Ric|^2e^{-f}+\fint_{B_1(x)} |\nabla\varphi|^2|\nabla f|^2e^{-f}\\
	&\le c(n)\lambda +c(n)\sup_{B_1(x)}|\nabla f|+\fint_{B_1(x)}\varphi^2|\Ric|^2e^{-f},
\end{align}
where we also use the fact that $|\nabla f|\le 1$.
Therefore, we get
\begin{align}
	\fint_{B_{3/4}(x)}|\Ric|^2\le c(n)\fint_{B_1(x)}|\Ric|^2\varphi^2e^{-f}\le c(n)\lambda +c(n)\sup_{B_1(x)}|\nabla f|.
\end{align}
Combining with \eqref{e:Ricci_infinitybounds} gives us
\begin{align}
	\sup_{B_{1/2}(x)}|\Ric|^2\le c(n)\lambda +c(n)\sup_{B_1(x)}|\nabla f|,
\end{align}
which finishes the proof.
\end{proof}

Now we are ready to prove Proposition \ref{p:boundsRicci}
\begin{proof}[Proof of Proposition \ref{p:boundsRicci}]
	 Consider the function $h(y)=d\Big(y,\partial B_1(x)\Big)^2|\Ric|(y)$. We only need to show $h$ is bounded in $B_1(x)$ by a universal constant. Let us argue by contradiction. Assume $h(x_0)=\sup_{B_1(x)} h= N^2>10$. Let $2r_0=d(x_0,\partial B_1(x))>0$. Then we have $\sup_{B_{r_0}(x_0)}|Ric|\le \frac{N^2}{r_0^2} $ and $|Ric|(x_0)=\frac{N^2}{4r_0^2}$. Consider the rescaled metric $\tilde{g}=\frac{N^2}{r_0^2}g$. Then the ball $(B_{r_0}(x_0),g)$ is rescaled to a ball $(B_{N}(x_0),\tilde{g})$ with $\sup_{B_N(x_0)}|\tilde{\nabla} f|\le r_0/N$, $|\tilde{\Ric}|\le 1$, $|\tilde{\Ric}|(x_0)=\frac{1}{4}$ and $\int_{B_N(x_0)}|\tilde{\Rm}|^2\le \epsilon$. In particular we have $\int_{B_1(x_0)}|\tilde{\Rm}|^2\le \epsilon$. Moreover we have shrinking soliton
	 \begin{align}
	 	\tilde{\Ric}+\tilde{\nabla}^2 f=\tilde{\lambda}\tilde{g}
	 \end{align}
	with $\tilde{\lambda}=\lambda \frac{r_0^2}{N^2}\le \frac{r_0^2}{N^2}$. Applying Theorem \ref{t:boundscurvature_assume_boundedRicc} to $(B_1(x_0),\tilde{g})\subset B_N(x_0)$, we conclude that $\sup_{B_{1/2}(x_0)}|\tilde{\Rm}|\le C$ for some universal constant $C$. On the other hand, by Lemma \ref{l:boundsRiccibyscalar}, we can use $|\tilde{\nabla}f|+|\tilde{\lambda}|$ to control the Ricci curvature such that
	\begin{align}\label{e:tilde_Ric_bounds}
	\sup_{B_{1/4}(x_0)}|\tilde{\Ric}|^2\le \tilde{C}\Big(\sup_{B_{1}(x_0)}|\tilde{\nabla}f|+|\tilde{\lambda}|\Big)\le \tilde{C}\frac{r_0}{N},
	\end{align}
	for a universal constant $\tilde{C}$. Noting that $r_0\le 1$, if $N$ is large enough, this contradicts with $|\tilde{\Ric}|(x_0)=\frac{1}{4}$. In fact, $|\tilde{\Ric}|(x_0)=1/4$ and \eqref{e:tilde_Ric_bounds} give the bound $N\le \tilde{C}r_0\le \tilde{C}$. Therefore, we get our desired Ricci curvature bound.
\end{proof}

\vspace{.5cm}
\subsection{Proving Theorem \ref{t:regularity}}
Theorem \ref{t:regularity} will follow from Proposition \ref{p:boundsRicci} and Theorem \ref{t:boundscurvature_assume_boundedRicc}. Indeed, for any $x\in M$ and $r\le 1$ with $d(x,p)=R$ we have by Lemma \ref{l:scalarbounds} that $|\nabla f|^2\le R^2+1$ on $B_{r}(x)$.  Without loss any generality, assume $R>1$. To prove Theorem \ref{t:regularity},  it suffices to show that there exists a universal constant $C$ such that for every point $y\in B_{r/2}(x)$
\begin{align}
	|\Rm|(y)\le Cr^{-2}R^2.
\end{align}

Actually, consider the rescaled metric $\tilde{g}=4r^{-2}R^2g$. We have $|\tilde{\nabla} f|^2\le 1$ on $\tilde{B}_{2R}(y)$ and $\int_{B_{r/2}(y)}|\Rm|^2=\int_{\tilde{B}_R(y)}|\tilde{\Rm}|^2$. By Proposition \ref{p:boundsRicci}, there exist universal constants $\epsilon$ and $A$ such that if $\int_{\tilde{B}_1(y)}|\tilde{\Rm}|^2\le \int_{\tilde{B}_R(y)}|\tilde{\Rm}|^2\le \epsilon$ we have $\sup_{\tilde{B}_{1/2}(y)}|\tilde{\Ric}|\le A$. Rescaling $\tilde{g}$ by $\hat{g}=A\tilde{g}=Ar^{-2}R^2g$ and applying Theorem \ref{t:boundscurvature_assume_boundedRicc} we get for some universal constant $C$ that
\begin{align}
	\sup_{\hat{B}_{1/2}(y)}|\hat{\Rm}|\le C.
\end{align}
 Returning back to metric $g$ we have
\begin{align}
	|\Rm|(y)\le CAr^{-2}R^2,
\end{align}
where $C$ and $A$ are universal constants. Hence we finish the proof of Theorem \ref{t:regularity}.\qed

\subsection{Proving Theorem \ref{t:blowup_points}}
The proof is standard. For any $i\ge 0$ define the set $A_i=\{x_1^i,\cdots, x_{k_i}^i\}\subset M$ to be a maximal subset of points such that $\int_{B_{2^{-i}}{(x)}}|\Rm|^2\ge \epsilon$ and $ B_{2^{-i}}{(x)}$ are disjoint, where $\epsilon$ is the universal constant in Theorem \ref{t:regularity}. It is obvious that $A_i\subset B_2(A_0)$ which is uniformly bounded.

Denote $r_i=2^{-i}$. For each $i$ let us refine the set $A_i$ by the following process. Add minimal number of points of $A_{i-1}$ to $A_i$ such that the new set ${A}_{i,i-1}$ satisfying $B_{2r_{i-1}}(A_{i-1})\subset B_{4r_{i-1}}({A}_{i,i-1})$. It is easy to check that the new point $x\in A_{i,i-1}\cap A_{i-1}$ satisfying that $B_{r_{i-1}}(x)\cap B_{r_i}(A_i)=\emptyset$ which implies the cardinality of $A_{i,i-1}$ is less than $\Lambda\epsilon^{-1}$.

Then add minimal number of points of $A_{i-2}$ to ${A}_{i,i-1}$ such that the new set ${A}_{i,i-2}$ satisfying $B_{2r_{i-2}}(A_{i-2})\subset B_{4r_{i-2}}({A}_{i,i-2})$.  By doing this process $i$ times, we get a new set ${A}_{i,0}$ which satisfies $B_{2r_j}(A_j)\subset B_{4r_j}({A}_{i,0})$ for all $0\le j\le i$.  From the construction we have the cardinality of $\# {A}_{i,0}$ is less than $\Lambda\epsilon^{-1}$. Since $A_i\subset B_2(A_0)$ we have for some sufficiently large $i_0$ that $A_{i_0}=\emptyset$. Set  ${A}=A_{i_0,0}=\{p_1,\cdots,p_N\}$. Then the finite set $A$ satisfies Theorem \ref{t:blowup_points}.

Indeed, for any $x\in M$ if $r=d(x,A)$ with $4r_{i+1}<r\le 4r_i$, then $x\notin B_{2r_{i+1}}(A_{i+1})$ which implies that $\int_{B_{r_{i+1}}(x)}|\Rm|^2\le \epsilon$. Hence by Theorem \ref{t:regularity} we have
\begin{align}
	|\Rm|(x)\le C\Big(d^2(p,x)+1\Big) (r^{-2}+1),
\end{align}
which concludes the estimate in Theorem \ref{t:blowup_points}.\qed



\vspace{.4cm}
\subsection{Proving Theorem \ref{t:structure}}
By the volume comparison in \cite{WeWy} and the potential estimate in Lemma \ref{l:scalarbounds}, after passing to a subsequence we have $(M_i,g_i,p_i)\to (X,d,p)$ in pointed Gromov-Hausdorff sense for some length space $(X,d,p)$. By Theorem \ref{t:blowup_points} we have finite points $A_i=\{q_1^i,\cdots, q_{N_i}^i\}\subset M_i$ with $N_i\le N(\Lambda)$ and
\begin{align}
	|\Rm|(x)\le C\Big(d^2(p_i,x)+1\Big) (d(x,A_i)^{-2}+1), \mbox{  for all $x\in M_i$}.
\end{align}
Consider the limit $A_\infty\subset X$ of $A_i$.  The cardinality $\# A_\infty \le N(\Lambda)$. For any $x\in X\setminus A_\infty$ with $r=d(x,A_\infty)>0$, assume $x_i\in M_i\to x$ which implies that $d(x_i,A_i)>r/2$ for sufficiently large $i$. By Theorem \ref{t:regularity}, we have
\begin{align}
	\sup_{B_{r/2}(x_i)}|\Rm|\le C(d(p,x)) r^{-2}.
\end{align}
Combining with the soliton equation, we have for all $k\ge 0$ as in Lemma \ref{l:derivative_curvature}  that it holds the following curvature derivative estimate
\begin{align}
	\sup_{B_{r/4}(x_i)}|\nabla^k \Rm|\le C_k r^{-2-k}.
\end{align}
 By Theorem 1.1 and Remark 1.2 of Naber-Tian \cite{NaTi}, we conclude that a neighborhood of $x$ is a smooth Riemannian orbifold. The lower sectional curvature bounds is a direct consequence of the fact the Alexandroff space is preserved along GH-convergence. See also the discussion in Naber-Tian \cite{NaTi} and \cite{LoVi}.\qed

\vspace{.5cm}
\section{$\epsilon$-regularity of Ricci flow}\label{s:eps_Ricciflow}
The main purpose of this section is to prove the $\epsilon$-regularity of Ricci flow. The idea is similar as that of the shrinking soliton case where we reduce the $\epsilon$-regularity to bounded Ricci curvature manifold case. We will see that the crucial part is a backward Pesudolocality estimates for Ricci flow. The main result is the following theorem
\begin{theorem}\label{t:eps_regular_Ricciflow_global}
	There exist universal constants $\epsilon$ and $C$ such that if $(M^4,g(t))$ is a compact Ricci flow on $(-1,1)$ with bounded scalar curvature $|R|\le 1$ at all time, and if at the time $t=0$ we have
	\begin{align}
		\int_{M}|\Rm|^2\le \epsilon
	\end{align}
	and the diameter $\diam(M,g(0))=D$, then the curvature at $t=0$ is bounded
	\begin{align}
		\sup_M |\Rm|\le C \max\{D^{-2},1\}.
	\end{align}
\end{theorem}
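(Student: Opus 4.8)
The plan is to follow the same three-step reduction used for shrinking solitons in Section~\ref{s:soliton}, replacing the soliton identities by the Ricci flow evolution equations and, crucially, replacing Lemma~\ref{l:derivative_curvature} (which was free for solitons) by Shi's local derivative estimate fed by a backward curvature bound. First I would prove a Ricci flow analogue of Lemma~\ref{l:eps_average_energy}: an $\epsilon$-regularity at time $t=0$ under a \emph{small average} $L^2$-curvature hypothesis. Feeding this into the purely Riemannian Theorem~\ref{t:curvatureL2_small} of Cheeger--Tian--Li — which converts small $L^2$ curvature into small average $L^2$ curvature at a definite scale, using only $\sup_{B_1}|\Ric|\le3$ at the single time $t=0$ — yields a Ricci flow analogue of Theorem~\ref{t:boundscurvature_assume_boundedRicc}: an $\epsilon$-regularity under a bounded-Ricci hypothesis. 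Next I would prove a Ricci flow analogue of Proposition~\ref{p:boundsRicci}, namely that $\int_M|\Rm|^2\le\epsilon$ at $t=0$ forces $\sup_M|\Ric|(\cdot,0)\le A$ for a universal $A$; combining this with the bounded-Ricci $\epsilon$-regularity then gives the theorem after normalizing the diameter.

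For the average-energy step I would run the point-selection argument of Lemma~\ref{l:eps_average_energy}: blowing up at a maximum of $d(\cdot,\partial B_r(x))^2|\Rm|(\cdot,0)$ reduces matters to the bound $\sup_{B_r(x)}|\nabla\Rm|(\cdot,0)\le Cr^{-3}$ under $\sup_{B_{2r}(x)}|\Rm|(\cdot,0)\le r^{-2}$. In place of the soliton computation I would use Shi's local estimate, which needs the space-time bound $\sup_{B_r(x)\times[-r^2,0]}|\Rm|\le Cr^{-2}$; supplying exactly this is the job of the backward Pseudolocality estimate (Proposition~\ref{p:backward}), which propagates a curvature bound at $t=0$ backwards over a definite time using the hypothesis $|R|\le1$ on all of space-time. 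For the Ricci bound I would establish the Ricci flow analogue of Lemma~\ref{l:boundsRiccibyscalar} (Wang's estimate): from $\partial_tR=\Delta R+2|\Ric|^2$, integration against a space-time cutoff gives $\fint|\Ric|^2\lesssim\sup|R|$ on a smaller parabolic neighbourhood, and then $\partial_t|\Ric|^2=\Delta|\Ric|^2-2|\nabla\Ric|^2+\Rm\ast\Ric\ast\Ric$ together with parabolic Moser iteration (Sobolev constant controlled by Lemma~\ref{l:Sobolev} once $|\Ric|\le3$) yields the scale-invariant bound $\sup|\Ric|^2\lesssim\sup|R|\,\sup|\Rm|$; the rescaling/contradiction argument of Proposition~\ref{p:boundsRicci} then applies verbatim.

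With $\sup_M|\Ric|(\cdot,0)\le A$ in hand, rescaling the flow by $A$ makes $|\Ric|\le1\le3$, $|R|\le A^{-1}\le1$ and $\int_M|\Rm|^2\le\epsilon$ simultaneously; covering $(M,g(0))$ by unit balls and applying the bounded-Ricci $\epsilon$-regularity on each gives $\sup_M|\Rm|(\cdot,0)\le C$ whenever $\diam(M,g(0))\ge1$, while the case $D<1$ reduces to this by the parabolic rescaling $g(t)\mapsto D^{-2}g(D^2t)$ — which enlarges the time interval to $(-D^{-2},D^{-2})$, keeps $|R|\le D^2\le1$, and leaves $\int_M|\Rm|^2$ invariant — producing $\sup_M|\Rm|(\cdot,0)\le CD^{-2}$. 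The main obstacle is Proposition~\ref{p:backward} itself, since Ricci flow is ill-posed backwards in time and everything must be wrung out of $|R|\le1$: I would attempt a blow-up argument at the first backward time and point where $d^2|\Rm|$ doubles, using that bounded scalar curvature — via Perelman's reduced-volume and heat-kernel monotonicity and the attendant backward non-collapsing and distance-distortion estimates — forces the blow-up limit to be a nonflat complete flow with $R\equiv0$, hence Ricci-flat, so that applying Perelman's forward Pseudolocality theorem from a slightly earlier time on the limit contradicts the curvature concentration; making this quantitative and global is the technical heart of Section~\ref{s:eps_Ricciflow}.
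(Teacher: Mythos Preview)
Your overall three-step architecture --- (i) an $\epsilon$-regularity under small \emph{average} $L^2$ curvature, (ii) feeding Theorem~\ref{t:curvatureL2_small} through it to get a bounded-Ricci $\epsilon$-regularity, (iii) a Wang-type $|\Ric|^2\lesssim |R|\cdot|\Rm|$ estimate closing a rescaling loop --- is exactly the paper's strategy, and your reduction of step~(i) to a curvature-derivative bound via Shi plus a backward curvature estimate is also correct. The diameter normalization at the end is handled just as you describe.

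The genuine gap is your proposed proof of the backward estimate (Proposition~\ref{p:backward}). A blow-up/compactness argument of the kind you sketch requires extracting a smooth limit flow, hence some uniform injectivity-radius or volume lower bound along the rescaled sequence. Bounded scalar curvature alone does \emph{not} supply this: Perelman's no-local-collapsing and the reduced-volume machinery give $\kappa$-noncollapsing only relative to an initial entropy or volume bound, which is absent here --- indeed the whole point of the theorem is to allow collapse. So your limit need not exist, and the contradiction with forward pseudolocality never gets off the ground. (The paper itself remarks in Section~\ref{s:discussion} that the noncollapsed case of backward pseudolocality is in \cite{BaZh}, and leaves the local collapsed version open.)

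The paper's actual argument for Proposition~\ref{p:backward} is instead a direct, quantitative induction on dyadic scales $r_k=2^{-k}$, in the spirit of Bamler--Zhang: setting $f(t)=\sup_M|\Rm|(\cdot,t)$, one shows by induction that $f(s)\le r_k^{-2}$ forces $\sup_{[s-r_k^2,s]}f\le 2r_k^{-2}$. The base case uses only compactness of $M$ (some $k_0$ works trivially). For the inductive step, the hypothesis at scale $r_{k+1}$ gives space-time curvature bounds on a slightly larger parabolic region, and then Lemma~\ref{l:time_derivative_estimate} --- precisely the Wang/Bamler--Zhang estimate $|\partial_t|\Rm||\le C\sqrt{S}\,r^{-3}$ you already isolated for step~(iii) --- is integrated in $t$ to control how much $f$ can grow backwards over one more $r_k^2$-interval. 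No limit extraction, and hence no noncollapsing, is needed.

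A smaller point: because the backward estimate is only proved \emph{globally} (it uses $\sup_M|\Rm|$, not $\sup_{B_{2r}(x)}|\Rm|$), the derivative bound you feed into step~(i) is the global Theorem~\ref{t:curvature_derivative}, $\sup_M|\nabla\Rm|\le CA^{3/2}$ when $\sup_M|\Rm|=A$. Accordingly the paper does not prove a separate local average-energy lemma for Ricci flow; in Proposition~\ref{p:eps_regularity_Ricci_flow_assume_bounded_Ricci} it works directly at the global maximum point $x_0$ of $|\Rm|(\cdot,0)$ rather than running the local point-selection $d(\cdot,\partial B_r(x))^2|\Rm|$ you describe, which would require the unavailable local backward estimate.
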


\begin{remark}
	By scaling $|R|\le S$ to $|R|\le 1$ we can see that Theorem \ref{t:regularity_Ricciflow} is a direct corollary of Theorem \ref{t:eps_regular_Ricciflow_global}. To prove theorem \ref{t:eps_regular_Ricciflow_global} we may assume $D\ge 1$ since we can rescal the metric when $D<1$.
\end{remark}

\subsection{Curvature Derivative estimates}\label{ss:backward}
In this subsection, we will show that Ricci flow equips properties similar as that of critical metrics.  The main result of this subsection is the following curvature derivative estimate which is one key estimate toward our $\epsilon$-regularity theorem. \begin{theorem}\label{t:curvature_derivative}
	Let $(M^n,g(t))$ be a compact Ricci flow on $(-1,1)$ with bounded scalar curvature $|R|\le n$. If the Riemann curvature $\sup_{(M,g(0))}|\Rm|\le A$ with $A\ge 1$ and $\diam(M,g(0))\ge 1$, then
	\begin{align}
		\sup_{(M,g(0))}|\nabla \Rm|\le C(n)A^{3/2}.
	\end{align}
\end{theorem}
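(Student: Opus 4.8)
The plan is to exploit the parabolic nature of the Ricci flow, combined with the bounded scalar curvature hypothesis, to upgrade the curvature bound at $t=0$ to a space-time curvature bound on a small backward parabolic neighborhood, and then apply Shi's local derivative estimates. First I would use the fact that the time derivative of the metric is $\partial_t g = -2\Ric$ together with the evolution equation $\partial_t R = \Delta R + 2|\Ric|^2$; since $|R|\le n$ at all times, integrating in time gives a bound on $\int_{-1}^0 \fint |\Ric|^2$ in an appropriate sense, but more usefully the maximum principle applied to $R$ on the closed manifold $M$ controls $R$ without extra work. The real point is to control $|\Rm|$ (not just $|\Ric|$) on a backward time slab $M\times[-\tau,0]$ for $\tau\sim A^{-1}$.

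The key step is a backward-in-time curvature estimate: I would show that $\sup_{M\times[-cA^{-1},0]}|\Rm|\le 2A$ for a universal (dimensional) constant $c$. For this I would argue by a continuity/bootstrap argument in time. Set $\tau^* $ to be the supremum of times $\tau\in(0,1)$ such that $\sup_{M\times[-\tau,0]}|\Rm|\le 2A$; this set is nonempty and relatively open by continuity. On $M\times[-\tau^*,0]$ we have $|\Rm|\le 2A$, so by Shi's estimates applied forward from $t=-\tau^*$ — or more directly by the evolution inequality $\partial_t |\Rm|^2 \le \Delta|\Rm|^2 - 2|\nabla\Rm|^2 + c(n)|\Rm|^3$ combined with the doubling-time estimate — the curvature cannot have doubled unless $\tau^*\gtrsim A^{-1}$. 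The subtlety is that the standard doubling-time estimate gives control only on a time interval of length $\sim (\sup|\Rm|)^{-1}$, which is exactly what we want; but here the flow is defined \emph{backward} from $t=0$, so I would instead run the argument on the time-reversed flow or appeal directly to the fact that $|\Rm|$ satisfies a reaction-diffusion inequality whose blow-up time from a bound $A$ is $\gtrsim A^{-1}$ in either time direction (the diffusion term only helps). I expect this backward propagation to be the main obstacle, because naively Shi-type estimates and doubling-time estimates are forward-in-time statements and one must be careful that the bounded scalar curvature hypothesis is what prevents pathological backward behavior; in fact here we do not even need $|R|$ bounded for this particular slab estimate — that hypothesis enters the full $\epsilon$-regularity machinery elsewhere — so the slab estimate should follow purely from the curvature bound $A$ at $t=0$ and the evolution inequality.

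Once the space-time bound $\sup_{M\times[-cA^{-1},0]}|\Rm|\le 2A$ is in hand, I would apply Shi's local (in fact global, since $M$ is compact) derivative estimate: for a Ricci flow with $|\Rm|\le K$ on $M\times[0,T]$ one has $|\nabla\Rm|\le C(n)K\big(\tfrac1{\sqrt t}+\sqrt K + \dots\big)$, so evaluating at $t=0$ for the slab of length $\tau \sim A^{-1}$ and $K\sim A$ gives $|\nabla\Rm|(\cdot,0)\le C(n)A\cdot(\sqrt{1/\tau}+\sqrt A) \le C(n)A\cdot\sqrt A = C(n)A^{3/2}$, which is exactly the claimed bound. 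The role of the hypotheses $A\ge 1$ and $\diam(M,g(0))\ge 1$ is to absorb lower-order terms and to guarantee that the natural parabolic scale $A^{-1/2}$ is smaller than the diameter, so that no global geometry interferes; with $A\ge 1$ the dominant term in Shi's estimate is $\sqrt A$ rather than the constant, giving the power $3/2$. I would close by remarking that the same argument, localized, would give the corresponding statement on balls, which is what feeds into the backward Pseudolocality Proposition \ref{p:backward}.
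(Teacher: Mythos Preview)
Your overall architecture---backward-in-time curvature bound on a slab of length $\sim A^{-1}$, then Shi---matches the paper exactly. The gap is in how you obtain the backward slab estimate. You assert that ``the diffusion term only helps'' and that the bounded scalar curvature hypothesis is \emph{not} needed for this step. Both claims are wrong. Going backward in time the Laplacian becomes anti-diffusive, so the reaction--diffusion inequality $\partial_t|\Rm|\le \Delta|\Rm|+c|\Rm|^2$ gives no backward doubling-time bound whatsoever; the usual ODE comparison for $\partial_t|\Rm|\le c|\Rm|^2$ is a forward-in-time maximum-principle statement and simply fails in reverse. Without an extra ingredient, a curvature bound at $t=0$ says nothing about $t<0$.

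The paper's Proposition~\ref{p:backward} uses the scalar bound $|R|\le n$ in an essential way. From $\partial_t R=\Delta R+2|\Ric|^2$ one first gets a space-time $L^2$ bound on $\Ric$ of order $S$ (with $S$ the scalar bound), and then parabolic Moser iteration on the evolution of $\Ric$ upgrades this to the pointwise estimate $|\Ric|\le C\sqrt{S}\,r^{-1}$ under a space-time curvature bound $|\Rm|\le r^{-2}$ (Lemma~\ref{l:time_derivative_estimate}). Since $\partial_t\Rm=\Ric\ast\Rm+\nabla^2\Ric$, this yields the \emph{improved} estimate $|\partial_t|\Rm||\le C\sqrt{S}\,r^{-3}$ rather than the naive $|\Rm|^2\sim r^{-4}$. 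This $\sqrt{S}$-gain is exactly what makes an induction on dyadic scales close: one propagates the bound backward step by step, with the error at each step being $C\sqrt{S}\,r_i^2\cdot r_{i+1}^{-3}$, summable because of the extra factor of $r$. Your continuity/bootstrap sketch, by contrast, never invokes $|R|$ and would not close. Finally, your last remark has the logic reversed: Proposition~\ref{p:backward} is an input to Theorem~\ref{t:curvature_derivative}, not a consequence of it, and the paper explicitly leaves the local backward estimate open (Section~\ref{s:discussion}).
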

\vspace{.3cm}
\begin{remark}
The proof of Theorem \ref{t:curvature_derivative} depends on a backward curvature estimate and Shi's curvature derivative estimates. All the results in this subsection hold for all dimensions.
\end{remark}

\subsubsection{Backward Curvature estimates}

Applying similar argument as in Bamler-Zhang \cite{BaZh}, we can prove the following backward curvature estimate. See also Li-Wang-Zheng \cite{LWZ} for a backward curvature estimate of Calabi flow.

\begin{proposition}[Backward Curvature]\label{p:backward}
	Let $(M^n,g(t))$ be a compact Ricci flow on $(-1,1)$ with bounded scalar curvature $|R|\le n$. If the Riemann curvature $\sup_{(M,g(0))}|\Rm|\le A$ with $A\ge 1$ and $\diam(M,g(0))\ge 1$, then there exists a constant $\epsilon(n)>0$ such that $\sup_{(M,g(t))}|\Rm|\le 2A$ for all $-\epsilon A^{-1}\le t\le 0$.
\end{proposition}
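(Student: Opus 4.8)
The plan is to run a contradiction/continuity argument in the spirit of Bamler–Zhang \cite{BaZh}, exploiting the fact that along Ricci flow the evolution of $|\Rm|$ is driven by $\partial_t |\Rm| \le \Delta |\Rm| + c(n)|\Rm|^2$, while the \emph{scalar} curvature is a priori bounded. Set $T_0 := \sup\{\, \tau \in [0,\epsilon A^{-1}] : \sup_{M\times[-\tau,0]}|\Rm| \le 2A \,\}$; this set is nonempty and closed, and $T_0>0$ by continuity since $|\Rm|\le A$ at $t=0$. The goal is to show that if $\epsilon = \epsilon(n)$ is chosen small, then in fact $\sup_{M\times[-T_0,0]}|\Rm| < 2A$ strictly, which forces $T_0 = \epsilon A^{-1}$ and proves the claim. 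So we argue by contradiction: suppose $|\Rm|(x_0,t_0) = 2A$ for some $(x_0,t_0)$ with $-T_0 \le t_0 < 0$.

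The key mechanism is a \emph{local} interplay between the bounded scalar curvature, the distance distortion estimate, and a heat-kernel / point-selection argument. On the time interval $[-T_0,0]$ we have the uniform bound $|\Rm|\le 2A$, hence $|\Ric|\le c(n)A$, so metrics $g(t)$ for $t\in[-T_0,0]$ are uniformly equivalent: $e^{-c(n)A T_0} g(0) \le g(t) \le e^{c(n)A T_0}g(0)$, and since $T_0 \le \epsilon A^{-1}$ this is a factor $e^{\pm c(n)\epsilon}$, essentially $1$. In particular $\diam(M,g(t)) \ge \tfrac12$ for all such $t$ once $\epsilon$ is small, and distances, volumes of balls of radius $\lesssim A^{-1/2}$, are all comparable across these times. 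Now one adapts the Bamler–Zhang argument: using the bound $|R|\le n$, Perelman's no-local-collapsing / the reduced-volume monotonicity gives a uniform lower volume bound for the flow on this short interval; combined with the curvature bound $|\Rm|\le 2A$ this yields a uniform Sobolev inequality on the parabolic region $B_{g(0)}(x_0, A^{-1/2}) \times [t_0 - A^{-1}, t_0]$ (after noting $t_0 - A^{-1} \ge -\epsilon A^{-1} - A^{-1} > -1$, so we stay inside the flow's time interval). Feeding $\partial_t|\Rm| \le \Delta|\Rm| + c(n)|\Rm|^2 \le \Delta|\Rm| + 2c(n)A|\Rm|$ into a parabolic Moser iteration / mean value inequality on this region controls $|\Rm|(x_0,t_0)$ by the parabolic average of $|\Rm|$, and one then bounds that average using the bounded scalar curvature (which controls $\int |\Ric|^2$ via $\partial_t R = \Delta R + 2|\Ric|^2$ integrated in time, giving $\int_{t_0-A^{-1}}^{t_0}\int_{M}|\Ric|^2 \lesssim$ bounded) together with $|\Rm|\le 2A$. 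Tracking the scaling, the average of $|\Rm|$ over the parabolic ball is $\le C(n)\big(A^{1/2}\cdot A^{-1/2}\big)\cdot(\text{small}) + C(n)A$, and the genuinely small contribution is multiplied by $\epsilon$; choosing $\epsilon(n)$ small makes the output strictly less than $2A$, contradicting $|\Rm|(x_0,t_0)=2A$.

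The main obstacle is the second step: obtaining a clean, scale-invariant \emph{local} parabolic mean-value estimate for $|\Rm|$ on the region $B(x_0,A^{-1/2})\times[t_0-A^{-1},t_0]$ with constants depending only on $n$. This requires a uniform local Sobolev/no-local-collapsing estimate on that parabolic neighborhood, and the natural tool is precisely the Bamler–Zhang machinery (distance distortion under bounded scalar curvature plus Perelman's estimates) — one must verify that their hypotheses are met on the short backward interval, in particular that the scalar curvature bound $|R|\le n$ (and not a full curvature bound) suffices to propagate non-collapsing backward in time for a duration $\sim A^{-1}$. The diameter hypothesis $\diam(M,g(0))\ge 1$ enters exactly to prevent the whole manifold from being smaller than the scale $A^{-1/2}$ at which we run the iteration, ensuring the local estimate is not vacuous. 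Once the local mean-value inequality is in hand, the closing-up of the continuity argument and the choice of $\epsilon(n)$ are routine.
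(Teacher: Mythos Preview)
Your continuity set-up is natural, but the argument does not close. There are two concrete gaps.

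First, your parabolic window $[t_0-A^{-1},t_0]$ is longer than the interval $[-T_0,0]$ on which you have the working hypothesis $|\Rm|\le 2A$ (indeed $T_0\le \epsilon A^{-1}\ll A^{-1}$). On $[t_0-A^{-1},-T_0)$ you have neither $|\Rm|\le 2A$ nor the derived inequality $|\Rm|^2\le 2A|\Rm|$, so the linear parabolic inequality you feed into Moser iteration is not available there, and (absent a curvature bound) neither is the Sobolev constant you need. Invoking no-local-collapsing from bounded scalar curvature does not supply the missing differential inequality. Second, and more seriously, even on the portion where $|\Rm|\le 2A$ holds, Moser iteration on $u=|\Rm|$ with $\partial_t u\le \Delta u + 2c(n)A\,u$ yields only $u(x_0,t_0)\le C(n)\cdot(\text{parabolic average of }u)\le C(n)\cdot 2A$ with $C(n)\ge 1$. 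There is no mechanism producing a factor strictly below~$1$; the vague claim that ``the genuinely small contribution is multiplied by $\epsilon$'' is not supported by the scaling. Controlling $\int|\Ric|^2$ via $\partial_t R=\Delta R+2|\Ric|^2$ does not help here, because the quantity you must bound is an average of $|\Rm|$, not of $|\Ric|$.

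The paper's proof avoids both issues by a different mechanism. The key input is Lemma~\ref{l:time_derivative_estimate}: assuming $|\Rm|\le r^{-2}$ on a parabolic cylinder, one first gets $|\Ric|\le C\sqrt{S}\,r^{-1}$ and $|\nabla^k\Ric|\le C_k\sqrt{S}\,r^{-1-k}$ by Moser iteration on the \emph{Ricci} evolution equation (the Sobolev constant coming from the Ricci lower bound, Lemma~\ref{l:Sobolev}), and then, using $\partial_t\Rm=\Ric\ast\Rm+\nabla^2\Ric$, obtains a two-sided bound $\big|\partial_t|\Rm|\big|\le C(n)r^{-3}$. This is one power of $r$ better than the naive $|\Rm|^2\sim r^{-4}$ and is what produces the genuine gain. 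The backward estimate is then proved by \emph{induction on dyadic scales} $r_k=2^{-k}$: the claim at scales $r_i,r_{i+1}$ provides curvature control on a slightly longer backward interval, on which Lemma~\ref{l:time_derivative_estimate} applies and gives $f(t)\le f(s)+C(n)(s-t)r_{i+1}^{-3}\le \big(1+16C(n)r_{i-1}\big)r_{i-1}^{-2}$; choosing $r_{i-1}\le\epsilon(n)$ small makes this $\le \tfrac{9}{8}r_{i-1}^{-2}$, and iterating four sub-steps reaches the full interval $[s-r_{i-1}^2,s]$ with bound $\le 2r_{i-1}^{-2}$. The missing idea in your proposal is precisely this improved time-derivative estimate for $|\Rm|$ coming from the bounded-scalar-curvature control of $\Ric$ and its derivatives, together with the scale induction that manufactures the needed backward parabolic region.
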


The proof depends on an improved curvature estimate on Ricci flow. The following lemma could be found in Bamler-Zhang \cite{BaZh}; see also Wang \cite{Wa}.  We only sketch a proof here, see \cite{BaZh} for more details.
\begin{lemma}[Improved curvature estimates]\label{l:time_derivative_estimate}
	Let $(M^n,g_t)$ be a Ricci flow on $(-1,1)$ with bounded scalar curvature  $|R|\le S\le n$ and $\diam(M,g(0))\ge 1$. Then there exists universal $C(n)$ such that if
		\begin{align}\label{e:bounded_curvature_assumption}
		\sup_{-r^2\le t\le 0}\sup_{B_r(x,g_t)}|\Rm|\le r^{-2}, \mbox{ for some $r\le 1$,}
	\end{align}
	then
	\begin{align}
	\sup_{-r^2/2\le t\le 0}\sup_{B_{ r/2}(x,g_t)}|\Ric|&\le C \sqrt{S}r^{-1}\\
		\sup_{-r^2/2\le t\le 0}\sup_{B_{ r/2}(x,g_t)}\Big|\partial_t|\Rm|\Big|&\le C\sqrt{S}r^{-3}.
	\end{align}
\end{lemma}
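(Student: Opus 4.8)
The plan is to prove the two estimates in turn: first the Ricci bound $\sup_{B_{r/2}}|\Ric|\le C\sqrt S\,r^{-1}$, and then to deduce the time‑derivative bound from it by interior parabolic regularity for the Ricci tensor. By the parabolic rescaling $\tilde g_t=r^{-2}g_{r^2t}$ I may assume $r=1$ (all hypotheses are scale invariant: $|R|\le S\le n$, $|\Rm|\le1$ on $B_1(x,g_t)$ for $t\in[-1,0]$, and $\diam(M,g(0))\ge1$). Since $|\Rm|\le1$ forces $|\Ric|\le n-1$ on that parabolic neighborhood, we have $|\partial_t g|=2|\Ric|\le 2(n-1)$ there, so the metrics $g_t$, $t\in[-1/2,0]$, are $e^{c(n)}$‑equivalent on $B_{3/4}(x)$; in particular $\Ric_{g_t}\ge -c(n)g_0$, so volume doubling and the local Sobolev inequality of Lemma~\ref{l:Sobolev} hold uniformly on the balls that enter below.

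For the Ricci bound, first I would extract an averaged $L^2$‑in‑spacetime estimate for $\Ric$ from the scalar curvature evolution $\partial_t R=\Delta R+2|\Ric|^2$ together with $\partial_t\,dV_{g_t}=-R\,dV_{g_t}$. Choosing a spacetime cutoff $\phi$ with $\phi\equiv1$ on $[-1/4,0]\times B_{1/4}(x)$, supported in $[-1/2,0]\times B_{1/2}(x)$, and $|\partial_t\phi|+|\nabla\phi|^2+|\Delta(\phi^2)|\le c(n)$ (possible since bounded curvature controls the distance function), I multiply $\partial_t R-\Delta R=2|\Ric|^2$ by $\phi^2$, integrate against $dV_{g_t}\,dt$, and integrate by parts in $t$ (the contribution at $t=-1$ vanishes since $\phi\equiv0$ there, that at $t=0$ is at most $S\,\Vol_{g_0}(B_{1/2}(x))$) and in space, obtaining
\[
2\iint|\Ric|^2\phi^2 \;=\; \int_M R\phi^2\,dV_{g_0}\big|_{t=0} - \iint R\,\partial_t(\phi^2) - \iint R\,\Delta(\phi^2) + \iint R^2\phi^2 \;\le\; c(n)\,S\,\Vol_{g_0}(B_{1/2}(x)),
\]
using $|R|\le S$ and $R^2\le nS$. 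Dividing by $\Vol_{g_0}(B_{1/4}(x))$ and using doubling gives $\fint\!\fint_{[-1/4,0]\times B_{1/4}(x)}|\Ric|^2\le c(n)S$. Then I would upgrade this to a pointwise bound: on $[-1/2,0]\times B_{1/2}(x)$ the Lichnerowicz evolution $\partial_t\Ric=\Delta\Ric+\Rm\ast\Ric$ gives $(\partial_t-\Delta)|\Ric|^2\le -2|\nabla\Ric|^2+c(n)|\Rm|\,|\Ric|^2\le c(n)|\Ric|^2$, so $|\Ric|^2$ is a subsolution and parabolic Moser iteration — valid by Lemma~\ref{l:Sobolev} and the uniform equivalence of the $g_t$ — yields $\sup_{[-1/8,0]\times B_{1/8}(x)}|\Ric|^2\le C(n)\fint\!\fint_{[-1/4,0]\times B_{1/4}(x)}|\Ric|^2\le C(n)S$; a routine covering argument enlarges the radius, and rescaling back gives $\sup_{[-r^2/2,0]\times B_{r/2}(x)}|\Ric|\le C(n)\sqrt S\,r^{-1}$.

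For the time‑derivative bound I set $M_0:=C(n)\sqrt S$, so (with $r=1$) $|\Ric|\le M_0$ on $[-1/2,0]\times B_{1/2}(x)$, while $|\Rm|\le1$ and, by Shi's local estimate, $|\nabla\Rm|\le c(n)$ on a slightly smaller parabolic region. Viewing $\Ric$ as a solution of the parabolic system $\partial_t\Ric-\Delta\Ric=\Rm\ast\Ric$, whose coefficients and inhomogeneity (of size $\le c(n)M_0$, with Lipschitz norm $\le c(n)M_0$ after bootstrapping with $|\nabla\Rm|\le c(n)$) are controlled, interior parabolic estimates give $|\nabla\Ric|+|\nabla^2\Ric|\le C(n)M_0$ on $[-1/4,0]\times B_{1/4}(x)$. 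Now I would use the evolution of curvature in connection form: writing $B^m_{kl}:=\partial_t\Gamma^m_{kl}$, which satisfies $|B|\le c(n)|\nabla\Ric|$, one has $\partial_t R^m_{jkl}=\nabla_jB^m_{kl}-\nabla_kB^m_{jl}$, hence $|\partial_t R_{ijkl}|_g\le 2|\Ric|\,|\Rm|+c(n)|\nabla^2\Ric|\le C(n)M_0$. Combining this with $\partial_t g^{-1}=2\Ric^{\sharp}$ and $|\Rm|\le1$ gives $\big|\partial_t|\Rm|^2\big|\le C(n)M_0|\Rm|$, whence $\big|\partial_t|\Rm|\big|\le C(n)M_0=C(n)\sqrt S$ (the behaviour at $|\Rm|=0$ is harmless, as $|\Rm|^2$ has a minimum there), and rescaling yields $\big|\partial_t|\Rm|\big|\le C(n)\sqrt S\,r^{-3}$ on $[-r^2/2,0]\times B_{r/2}(x)$.

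The hard part is this last step: the interior parabolic estimates must be arranged so that the smallness of $|\Ric|$ (of order $\sqrt S$) propagates all the way to $|\nabla^2\Ric|$, rather than the a priori bound $|\Rm|\le r^{-2}$ taking over, which would only yield $r^{-3}$ and not $\sqrt S\,r^{-3}$. This is precisely why one must use the connection form $\partial_t R_{ijkl}=\nabla(\partial_t\Gamma)+\Ric\ast\Rm$ of the curvature evolution and not the reaction–diffusion form $\partial_t\Rm=\Delta\Rm+\Rm\ast\Rm$, in which the terms $\Delta\Rm$ and $\Rm\ast\Rm$ are genuinely of size $r^{-4}$. A secondary technical point is keeping every constant universal through the Moser iteration, for which the lower Ricci bound coming from $|\Rm|\le r^{-2}$ together with Lemma~\ref{l:Sobolev} is exactly what is needed.
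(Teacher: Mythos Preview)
Your proposal is correct and follows essentially the same route as the paper: obtain an $L^2$ spacetime bound on $\Ric$ from the scalar curvature evolution $\partial_tR=\Delta R+2|\Ric|^2$, upgrade to a pointwise bound $|\Ric|\le C\sqrt S\,r^{-1}$ by parabolic Moser iteration on the Lichnerowicz equation, push the $\sqrt S$ factor to $|\nabla^k\Ric|$ via interior parabolic regularity, and then use the identity $\partial_t\Rm=\Ric\ast\Rm+\nabla^2\Ric$ (your ``connection form'') to conclude $|\partial_t|\Rm||\le C\sqrt S\,r^{-3}$. The only cosmetic difference is that the paper phrases the higher-derivative step as Moser iteration on the differentiated Ricci evolution, whereas you invoke interior parabolic (Schauder-type) estimates; both yield the same $|\nabla^k\Ric|\le C_k\sqrt S\,r^{-1-k}$ and your explicit remark that one must avoid the reaction--diffusion form $\partial_t\Rm=\Delta\Rm+\Rm\ast\Rm$ is exactly the point.
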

\begin{remark}
$B_r(x,g_t)$ is a ball with radius $r$ in $(M,g(t))$. By distance control due to bounded curvature assumption \eqref{e:bounded_curvature_assumption}, one can replace the region $P_{r,x,t}\equiv \cup_{-r^2\le t\le 0}B_r(x,g_t)$ by the space time region  $[-r^2,0]\times B_r(x,g_0)$.
\end{remark}

\begin{proof}
	
Let us first recall the evolution equation for scalar curvature
	$$\partial_t R=\Delta R+2|\Ric|^2.$$
	By multiplying a cutoff function to the above equation and integrating on space time $(-r^2,0]\times B_r(x,g_t)$, we get
	\begin{align}
		\fint_{-3r^2/4}^{0}dt \fint_{B_{3r/4}(x,g_t)}|\Ric|^2\le C(n)r^{-2}\sup_{(-r^2,0]\times B_r(x,g_t)}|R| \le C(n){S}r^{-2},
	\end{align}
	where we need to use bounded curvature estimate to get the derivative estimates for cutoff function.
	Now by using the evolution equation of $\Ric$
	\begin{align}\label{e:evolution_Ricci}
		(\partial_t-\Delta-2\Rm)\Ric=0,
	\end{align}
	and by parabolic Moser iteration which is valid due to Sobolev constant estimate in Lemma \ref{l:Sobolev}, we deduce the  pointwise Ricci curvature estimate
	\begin{align}
		\sup_{[-3r^2/5,0]\times B_{3r/5}(x,g_t)}|\Ric| \le C(n){\sqrt{S}}r^{-1}.
	\end{align}
	Using the derivative version of evolution equation for $\Ric$ curvature in\eqref{e:evolution_Ricci} and using Moser's iteration, we get higher derivative estimates for $\Ric$
	\begin{align}
		\sup_{[-r^2/2,0]\times B_{r/2}(x,g_t)}|\nabla^k\Ric|\le C(n,k)\sqrt{S}r^{-1-k},~~~\mbox{ for $k\ge 0$.}
	\end{align}
	Noting that (see \cite{To,Ha})
	\begin{align}
		\partial_t\Rm=\Ric\ast \Rm+\nabla^2\Ric,
	\end{align}
	we have
	\begin{align}
		\sup_{[-1/2r^2,0]\times B_{r/2}(x,g_t)}|\partial_t\Rm|\le C(n)\sqrt{S}r^{-3}.	\end{align}
	On the other hand, we have
		\begin{align}
			\Big|\partial_t|\Rm|\Big|\le C(n)|\partial_t g|\cdot |\Rm|+C(n)|\partial_t\Rm|\le C(n)|\Ric|\cdot |\Rm|+C(n)|\partial_t\Rm|.
		\end{align}
		Thus
		\begin{align}
			\sup_{[-r^2/2,0]\times B_{r/2}(x,g_t)}\Big|\partial_t|\Rm|\Big|\le C(n)\sqrt{S}r^{-3},
		\end{align}
		which finishes the proof.
\end{proof}

\subsubsection{Proving Proposition \ref{p:backward}}

 Denote $f(t)= \sup_{(M,g(t))}|\Rm|$. Let us begin with the following claim. \\

\textbf{Claim:} There exists $\epsilon(n)>0$ such that if $r\le \epsilon(n)$ and if $f(s)\le r^{-2}$ with fixed $s\in [-\frac{1}{2}+2r^2,0]$, then
$$\sup_{-r^2+s\le t\le s}f(t)\le 2r^{-2}.$$
\\

Once we prove the claim, Proposition \ref{p:backward}  would follow by choosing $s=0$ in the claim. Now it will suffice to prove the claim.

	We prove this by induction on $r=r_k=2^{-k}$. Since $M$ is compact, there exists $k_0$ depending on $(M,g(t))$ with $t\in [-3/4,0]$ such that $\sup_{-3/4\le t\le 0}\sup_{(M,g(t))}|\Rm|\le r_{k_0}^{-2}$. Thus the claim is true for such $r=r_{k_0}$.

	Now we assume the claim holds for all $r\le r_i$ with $r_i\le \epsilon$ for some $\epsilon(n)$ to be determined. We are going to prove the claim for $r=r_{i-1}$ provided $r_{i-1}\le  \epsilon$.
	
	Assume $f(s)\le r^{-2}_{i-1}$ for some $-\frac{1}{2}+2r_{i-1}^2\le s\le 0$. By induction for $f(s)\le r^{-2} _i$ we have
	$$\sup_{-r_i^2+s\le t\le s}f(t)\le 2 r_i^{-2}.$$
By induction again for $s'=s-r_i^2$, we get
\begin{align}
	\sup_{-r_{i+1}^2-r_i^2+s\le t\le s-r_i^2}f(t)=\sup_{-5r_{i+1}^2+s\le t\le s-r_i^2}f(t)\le 2 r_{i+1}^{-2}.
\end{align}	
Using Lemma \ref{l:time_derivative_estimate}, we have derivative estimates
\begin{align}
	\sup_{-r_i^2+s\le t\le s}\Big|\partial_t|\Rm|\Big|\le C(n) r_{i+1}^{-3}.
\end{align}
Hence for all $-r_i^2+s\le t\le s$, we have
\begin{align}
	f(t)\le f(s)+(s-t)C(n)r_{i+1}^{-3}\le r_{i-1}^2+C(n)r_i^2r_{i+1}^{-3}\le \Big(1+16C(n)r_{i-1}\Big)r_{i-1}^{-2}.
\end{align}
By choosing $\epsilon(n)$ small such that $16\epsilon C(n)\le 1/8$, we arrive at
\begin{align}
	\sup_{-r_i^2+s\le t\le s}f(t)\le \frac{9}{8}r_{i-1}^{-2}\le r_i^{-2}.
\end{align}
By using the same argument as above to $s\leftarrow s-r_i^2$, we can show that
\begin{align}
	\sup_{-2r_i^2+s\le t\le s-r_i^2}f(t)\le \frac{10}{8}r_{i-1}^{-2}\le r_i^{-2}.
\end{align}
Using the same argument twice to $s\leftarrow s-2r_i^2$ and $s\leftarrow s-3r_i^2$, we have
\begin{align}
	\sup_{-4r_i^2+s\le t\le s}f(t)\le \frac{12}{8}r_{i-1}^{-2}\le 2r_{i-1}^{-2}.
\end{align}
We should point out that we can use induction just because $s-4r_i^{2}\ge 2r_i^2-\frac{1}{2}$ for any $s\ge -\frac{1}{2}+2r_{i-1}^2$.
Thus we finish the proof of the claim by choosing $\epsilon\le \frac{1}{128C(n)}$, where $C(n)$ is the constant in Lemma \ref{l:time_derivative_estimate}. In particular, by letting $s=0$ and $r=\epsilon A^{-1/2}$ with $\epsilon=\epsilon(n)$ in the claim, we conclude the backward curvature estimate. \qed

\vspace{.5cm}
\subsubsection{Proving Theorem \ref{t:curvature_derivative}}
Theorem \ref{t:curvature_derivative} follows now directly from Proposition \ref{p:backward} and Shi's curvature estimates \cite{Shi}.  \qed

\vspace{.5cm}
\subsection{$\epsilon$-regularity with bounded Ricci curvature}\label{ss:eps_regularity_assume_bounded_Ricci_Ricciflow}
In this subsection, we will prove an $\epsilon$-regularity estimate of Ricci flow provided bounded Ricci curvature at time zero. In subsection \ref{ss:bounds_Ricci_Ricciflow}, we will show the Ricci curvature is bounded. The main result of this subsection is the following estimate.
\begin{proposition}\label{p:eps_regularity_Ricci_flow_assume_bounded_Ricci}
	There exist universal constants $\epsilon$ and $C$ such that if $(M^4,g(t))$ is a compact Ricci flow on $(-1,1)$ with bounded scalar curvature $|R|\le 1$ at all time, and if at the time $t=0$ we have  $\sup_{(M,g(0))}|\Ric|\le 3$,
		$\int_{M}|\Rm|^2\le \epsilon$ and $\diam(M,g(0))\ge 1$,
	then
	\begin{align}
		\sup_{(M,g(0))}|\Rm|\le C.
	\end{align}
\end{proposition}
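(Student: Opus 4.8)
The plan is to imitate the proof of the shrinking‑soliton analogue (Theorem~\ref{t:boundscurvature_assume_boundedRicc}): combine the Cheeger--Tian/Li estimate of Theorem~\ref{t:curvatureL2_small} with a curvature‑derivative estimate, the latter now supplied by the backward pseudolocality of the Ricci flow (Theorem~\ref{t:curvature_derivative}) in the role played by the soliton potential estimates in the soliton case. Write $A=\sup_{(M,g(0))}|\Rm|$; we may assume $A\ge 1$, otherwise there is nothing to prove. Since $|R|\le 1\le 4$ and $\diam(M,g(0))\ge 1$, Theorem~\ref{t:curvature_derivative} applies at the time slice $t=0$ and yields $\sup_{(M,g(0))}|\nabla\Rm|\le C_0 A^{3/2}$ for a universal constant $C_0$.

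Next I would choose a point $x_0\in M$ with $|\Rm|(x_0,0)=A$ (it exists since $M$ is compact) and use Kato's inequality together with the gradient bound to propagate the curvature: $|\Rm|(\cdot,0)\ge A/2$ on the tiny ball $B_{c_0A^{-1/2}}(x_0,g_0)$ with $c_0=(2C_0)^{-1}$. On the other hand, since $\sup_{B_1(x_0,g_0)}|\Ric(\cdot,0)|\le 3$ and $\int_{B_1(x_0,g_0)}|\Rm|^2\le\int_M|\Rm|^2\le\epsilon$, Theorem~\ref{t:curvatureL2_small} produces, for any prescribed $\epsilon'>0$, a radius $s_0\in[a(\epsilon'),1]$ with $s_0^4\fint_{B_{s_0}(x_0,g_0)}|\Rm|^2\le\epsilon'$. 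If $c_0A^{-1/2}\ge s_0$, then $A\le (c_0/a(\epsilon'))^2$ and we are finished; so we may assume $c_0A^{-1/2}<s_0$, hence $B_{c_0A^{-1/2}}(x_0)\subset B_{s_0}(x_0)$.

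The heart of the argument is then a volume‑comparison computation on the fixed time slice $t=0$. Because $\Ric(\cdot,0)\ge -3g$, Bishop--Gromov gives $\Vol\big(B_{c_0A^{-1/2}}(x_0)\big)\ge c_n\,(c_0A^{-1/2})^4 s_0^{-4}\,\Vol\big(B_{s_0}(x_0)\big)$ for a universal $c_n$. Combining the curvature lower bound on the small ball with the average smallness on $B_{s_0}$,
\[
\epsilon'\;\ge\; s_0^4\fint_{B_{s_0}(x_0)}|\Rm|^2 \;\ge\; s_0^4\,\frac{(A/2)^2\,\Vol\big(B_{c_0A^{-1/2}}(x_0)\big)}{\Vol\big(B_{s_0}(x_0)\big)}\;\ge\; \frac{c_n c_0^4}{4},
\]
where the cancellation $A^2\cdot A^{-2}=1$ is exactly the scale invariance of $\int|\Rm|^2$ in dimension four. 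Choosing $\epsilon'<c_n c_0^4/4$ once and for all (a fixed universal number, since $c_n,c_0$ are universal), and accordingly $\epsilon:=\delta(\epsilon')$, produces a contradiction; hence this second case cannot occur, and $A\le(c_0/a(\epsilon'))^2=:C$.

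The only genuinely hard input is Theorem~\ref{t:curvature_derivative}, i.e. the backward pseudolocality Proposition~\ref{p:backward} combined with Shi's estimates; everything else is bookkeeping. Two points deserve care. First, Theorem~\ref{t:curvature_derivative} is a \emph{global} estimate, so the gradient bound must be used at the global maximum of $|\Rm|(\cdot,0)$ rather than inside a fixed ball — there is no localized point‑picking here as in Lemma~\ref{l:eps_average_energy}. Second, the proof is purely scale invariant, so no volume lower bound (unavailable in the collapsed regime) is ever needed; the role of Theorem~\ref{t:curvatureL2_small} is precisely to exclude concentration of $\int|\Rm|^2$ at small scales, so that the curvature concentration near the maximum — living at the tiny scale $A^{-1/2}$ — can be transported by volume comparison out to the definite scale $s_0\ge a(\epsilon')$, where it collides with the smallness hypothesis. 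One should also make sure that $a(\epsilon')$ and $\delta(\epsilon')$ are evaluated at the \emph{fixed} value of $\epsilon'$ chosen above, so that the final constants $\epsilon$ and $C$ are universal.
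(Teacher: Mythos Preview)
Your proof is correct and follows essentially the same approach as the paper's: both combine Theorem~\ref{t:curvatureL2_small} with the global curvature-derivative estimate from Theorem~\ref{t:curvature_derivative}, then use Bishop--Gromov volume comparison to transport the curvature concentration at the maximum point from scale $A^{-1/2}$ to the definite scale $s_0\ge a(\epsilon')$, yielding a contradiction with the average smallness. Your write-up is more explicit about the case split $c_0A^{-1/2}\gtrless s_0$ and the cancellation $A^2\cdot A^{-2}=1$, but the underlying argument is identical.
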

\begin{proof}
	For any $\epsilon'$ there exit $\epsilon=\epsilon(\epsilon')$ and $\eta=\eta(\epsilon')>0$ such that if $\int_M|\Rm|^2\le \epsilon$ then by Theorem \ref{t:curvatureL2_small} we have for every $x\in (M,g(0))$  and some $1>s\ge \eta>0$ that
	\begin{align}
		s^4\fint_{B_s(x)}|\Rm|^2\le \epsilon'.
	\end{align}
	This will be good enough to deduce the curvature estimates. Indeed, denote $|\Rm|(x_0)=\sup_{(M,g(0))}|\Rm|=A$. We will show that $A\le s^{-2}$. Otherwise,  by curvature derivative estimate in Theorem \ref{t:curvature_derivative} we have $\sup_{(M,g(0))}|\nabla \Rm|\le CA^{3/2}$. Therefore, by the volume comparison we have for a universal constant $C_0$ that
	\begin{align}
		C_0\le A^{-2}\fint_{B_{A^{-1/2}}(x_0)}|\Rm|^2\le C(n)s^4\fint_{B_s(x_0)}|\Rm|^2\le C\epsilon',
	\end{align}
	which leads to a contradiction by choosing a small and universal $\epsilon'$. Hence we finish the proof of Proposition \ref{p:eps_regularity_Ricci_flow_assume_bounded_Ricci}.
\end{proof}

\vspace{.5cm}
\subsection{Ricci curvature estimates }\label{ss:bounds_Ricci_Ricciflow}
In this subsection we will argue as Section \ref{s:soliton}  to show that the Ricci curvature is bounded if the $L^2$ curvature integral is small. The main result is the following
\begin{proposition}\label{p:Ricci_curvaturebounds_Ricciflow}
	There exist universal constants $\epsilon$ and $C$ such that if $(M^4,g(t))$ is a compact Ricci flow on $(-1,1)$ with bounded scalar curvature $|R|\le 1$ at all time, and if at the time $t=0$ we have $\diam(M,g(0))\ge 1$ and
	\begin{align}
		\int_{M}|\Rm|^2\le \epsilon,
	\end{align}
	then the Ricci curvature at $t=0$ is bounded
	\begin{align}
		\sup_M |\Ric|\le C.
	\end{align}
\end{proposition}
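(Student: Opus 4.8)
The plan is to mimic the soliton argument of Section~\ref{s:soliton} (specifically Proposition~\ref{p:boundsRicci} and Lemma~\ref{l:boundsRiccibyscalar}), replacing the soliton-equation computation by the corresponding Ricci-flow computation and the soliton potential estimates by the improved curvature estimates of Lemma~\ref{l:time_derivative_estimate}. The key structural input is the following Ricci-flow analogue of Lemma~\ref{l:boundsRiccibyscalar}: if $(M^4,g(t))$ is a Ricci flow on $(-1,1)$ with $|R|\le 1$, $\diam(M,g(0))\ge 1$, and $\sup_{(M,g(0))}|\Rm|\le 1$, then $\sup_{(M,g(0))}|\Ric|^2 \le C$ where $C$ is a \emph{universal} constant obtained from the scalar-curvature bound. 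This should follow from Lemma~\ref{l:time_derivative_estimate} itself: bounded curvature on $(M,g(0))$ plus the backward curvature estimate of Proposition~\ref{p:backward} gives $\sup_{-r^2\le t\le 0}\sup_{B_r(x,g_t)}|\Rm|\le C$ on a definite scale, and then Lemma~\ref{l:time_derivative_estimate} yields $\sup_{(M,g(0))}|\Ric|\le C\sqrt{S}$, which with $S=1$ is just a universal constant. So the "Ricci controlled by scalar curvature" mechanism that drove the soliton proof is here packaged inside the parabolic Moser iteration already carried out.

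With that lemma in hand I would run the contradiction/point-selection argument exactly as in the proof of Proposition~\ref{p:boundsRicci}. First, upgrade the conclusion of Proposition~\ref{p:eps_regularity_Ricci_flow_assume_bounded_Ricci} to the present setting without the a priori $|\Ric|\le 3$ assumption: set $h(y)=d(y,\partial B_1(x))^2|\Ric|(y)$ on a ball (after localizing; here since $M$ is compact and $\diam\ge 1$ one can either work on the whole $M$ or on unit balls), suppose $h(x_0)=\sup h = N^2$ is large, rescale by $\tilde g(t)=\frac{N^2}{r_0^2}g(\frac{r_0^2}{N^2}t)$ so that $|\tilde\Ric|\le 1$ on the rescaled unit ball, $|\tilde\Ric|(x_0)=\tfrac14$, the rescaled scalar curvature bound becomes $|\tilde R|\le \frac{r_0^2}{N^2}\le N^{-2}$ (tiny), and the $L^2$ curvature integral is still $\le\epsilon$. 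Apply Proposition~\ref{p:eps_regularity_Ricci_flow_assume_bounded_Ricci} (with $\epsilon$ the universal constant there) to get $\sup|\tilde\Rm|\le C$ on a slightly smaller ball; then apply the Ricci-flow analogue of Lemma~\ref{l:boundsRiccibyscalar} to conclude $\sup|\tilde\Ric|^2 \le C\,\tilde S \le C N^{-2}$, which for $N$ large contradicts $|\tilde\Ric|(x_0)=\tfrac14$. This forces $N\le C$, i.e. the Ricci curvature is bounded by a universal constant.

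The main obstacle, and the part that needs the most care, is making the rescaled Ricci flow legitimately satisfy the hypotheses of Proposition~\ref{p:backward}/Lemma~\ref{l:time_derivative_estimate}: those require the flow on an interval like $(-1,1)$ (or at least $(-c,0]$) together with $\diam(M,g(0))\ge 1$, and under the parabolic rescaling the time interval shrinks by $\frac{N^2}{r_0^2}$ while the diameter grows, so one must check that after rescaling the backward-in-time existence interval is still long enough (it is, since $\frac{r_0^2}{N^2}\le 1$ means we only need the original flow on a \emph{shorter} interval) and that the diameter lower bound is preserved (it grows, so this is fine). A second technical point is that Proposition~\ref{p:eps_regularity_Ricci_flow_assume_bounded_Ricci} as stated needs $|R|\le 1$, whereas after rescaling we only have $|\tilde R|\le N^{-2}\le 1$, which is even better, so that is harmless. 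One should also double check that the backward curvature estimate and the improved-estimate lemma can be localized to unit balls rather than all of $M$, so that the point-picking function $h$ on $B_1(x)$ is the right object; alternatively, since $M$ is compact one can dispense with $h$ entirely and argue directly with $A=\sup_M|\Ric|$ at $t=0$, rescaling globally, which is cleaner and matches the outline in the commented-out Subsection~\ref{ss:outline}. In either case, once $\sup_M|\Ric|\le C$ is established, Proposition~\ref{p:Ricci_curvaturebounds_Ricciflow} is proved, and combined with Proposition~\ref{p:eps_regularity_Ricci_flow_assume_bounded_Ricci} it will yield Theorem~\ref{t:eps_regular_Ricciflow_global}.
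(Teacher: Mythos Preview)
Your global rescaling argument at the end---set $A=\sup_{(M,g(0))}|\Ric|$, rescale $\tilde g(t)=Ag(tA^{-1})$ so that $|\tilde R|\le A^{-1}$ and $\sup|\tilde\Ric|=1$, apply Proposition~\ref{p:eps_regularity_Ricci_flow_assume_bounded_Ricci} to obtain $\sup|\tilde\Rm|\le C_0$, then Proposition~\ref{p:backward} followed by Lemma~\ref{l:time_derivative_estimate} to obtain $1=\sup|\tilde\Ric|^2\le C_1C_0A^{-1}$, hence $A\le C_1C_0$---is exactly the paper's proof, line for line.

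The localized point-picking variant you sketch first, however, does not go through as stated: Proposition~\ref{p:eps_regularity_Ricci_flow_assume_bounded_Ricci} requires $\sup_{(M,g(0))}|\Ric|\le 3$ \emph{globally} (since its proof invokes the global backward estimate of Proposition~\ref{p:backward} and the global Theorem~\ref{t:curvature_derivative}), whereas after point-selection and rescaling you only control $|\tilde\Ric|$ on the rescaled ball $B_N(x_0)$, not on all of $M$. You flagged the need to localize Proposition~\ref{p:backward} and Lemma~\ref{l:time_derivative_estimate}, but the same localization issue already blocks the application of Proposition~\ref{p:eps_regularity_Ricci_flow_assume_bounded_Ricci} itself. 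The paper sidesteps this entirely by taking the global route from the outset, which is available precisely because $M$ is compact---just as you anticipated.
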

\begin{proof}

Let us fix $\epsilon$ as in Proposition \ref{p:eps_regularity_Ricci_flow_assume_bounded_Ricci} and denote the constant $C$ in Proposition \ref{p:eps_regularity_Ricci_flow_assume_bounded_Ricci} by $C_0$. Assume $\sup_{(M,g(0))}|\Ric|=A>3$. We will show $A$ is bounded by a universal constant $C$.

Consider the rescaling flow $\tilde{g}(t)=Ag(tA^{-1})$ which satisfies the condition in Proposition \ref{p:eps_regularity_Ricci_flow_assume_bounded_Ricci} with scalar curvature estimate $|\tilde{R}|\le A^{-1}$ and $\sup_{(M,\tilde{g}(0))}|\tilde{\Ric}|=1$. By Proposition \ref{p:eps_regularity_Ricci_flow_assume_bounded_Ricci} we have
\begin{align}
	\sup_{(M,\tilde{g}(0))}|\tilde{\Rm}|\le C_0
\end{align}
By Backward curvature estimate in Proposition \ref{p:backward} we have for a dimensional constant $\epsilon_0$ that
\begin{align}
	\sup_{-\epsilon_0\le t\le 0}\sup_{(M,\tilde{g}(t))}|\tilde{\Rm}|\le 2C_0.
\end{align}
Applying Ricci curvature estimate in Lemma \ref{l:time_derivative_estimate} we conclude for a universal constant $C_1$ that
\begin{align}
	1=\sup_{(M,\tilde{g}(0))}|\tilde{\Ric}|^2\le C_1C_0 A^{-1},
\end{align}
which in particular implies $A\le C_1C_0\equiv C$. Thus we finish the whole proof.
\end{proof}

\vspace{.5cm}
\subsection{Proving Theorem \ref{t:eps_regular_Ricciflow_global}}
Theorem \ref{t:eps_regular_Ricciflow_global} follows directly from  Proposition \ref{p:eps_regularity_Ricci_flow_assume_bounded_Ricci} and Proposition \ref{p:Ricci_curvaturebounds_Ricciflow}. \qed

\vspace{.5cm}
\section{Examples}\label{s:example}

In this section, we construct several examples to explain our results. The topology of all these examples are simple and are just product spaces.

\subsection{Higher dimensional Ricci flat metrics}
In this subsection, we will construct Ricci flat manifolds with dimension $n\ge 5$ such that they hold no $\epsilon$-regularity as Theorem \ref{t:cheeger-Tian}.

Indeed, for any $a>0$, we choose a sequence of complete Ricci flat $n-1$-dimensional manifolds $N_a$ with curvature $|\Rm|(p_a)=\max_{x\in N_a} |Rm|=a$. Consider the product manifolds $M_a=N_a\times S^1_\theta$ with product metric where $S^1_\theta$ is a circle with radius $\theta$. Then $M_a$ is Ricci flat.  Let us consider the ball $B_{1}(\bar p_a)$ where $\bar p_a= (p_a,0)\in M_a$. For any $\epsilon>0$, by choosing $\theta=\theta(a,\epsilon)$ small, due to $\Vol(B_1(\bar p_a))\le C(n)\theta \Vol(B_1(p_a))$ we always have
\begin{align}
	\int_{B_1(\bar p_a)}|\Rm|^{\frac{n}{2}}\le \epsilon,
\end{align}
which in particular implies that no $\epsilon$-regularity holds on $M_a$ since $|\Rm|(\bar p_a)\ge a$. $\square$

\begin{remark}
	For $n=5$, we can choose $4$-dimensional manifolds $N_a^4$ as the blow down of Eguchi-Hanson manifolds.
\end{remark}
\begin{remark}
From the construction above, we see that even $\int_{B_1}|\Rm|^p\le \epsilon$ for $p>n/2$ there still exists no $\epsilon$-regularity.	 Indeed, as pointed out by Naber-Zhang \cite{NaZh}, there are topological restrictions on higher dimensional $\epsilon$-regularity.
\end{remark}

\subsection{K\"ahler metric with zero scalar curvature }\label{ss:kahler_zero_scalar}
In this subsection, we construct K\"ahler metrics with zero scalar curvature which doesn't satisfy an $\epsilon$-regularity.

Indeed, let $(N,h)$ be a complete noncompact hyperbolic surface with finite volume and constant curvature $-1$ (see \cite{FaMa} for the existence of such a hyperbolic surface). For any $a>0$ consider the product space $(M_a,g_a)=N_a\times S_a^2$ with product metric, where $(N_a,h_a)=(N,a^2h)$ and $S_a^2$ is the standard two sphere with curvature $a^{-2}$. Since $N_a$ and $S_a^2$ are K\"ahler we have $M_a$ is K\"ahler and scalar flat. Let us see that $(M_a,g_a)$ doesn't satisfies any $\epsilon$-regularity as Cheeger-Tian. In fact, for any $a>0$ and $\epsilon>0$ since $M_a$ is noncompact and has finite volume, there exists $B_1(x)\subset M_a$ such that $\Vol(B_1(x))\le a^4 \epsilon$. Therefore
\begin{align}
	\int_{B_1(x)}|\Rm|^2\le \Vol(B_1(x)) a^{-4}\le \epsilon.
\end{align}
On the other hand, noting that the curvature of $(M_a,g_a)$ satisfies $|\Rm|(x)=a^{-2}$, there exists no $\epsilon$-regularity if $a$ is sufficiently small.$\square$
\\
\begin{remark}
K\"ahler metric with zero scalar curvature is a critical metric as defined in Definition \ref{d:critical}.	
\end{remark}



\subsection{Critical metrics with bounded Ricci curvature}
If we assume a bounded Ricci curvature for critical metrics, we can prove
\begin{theorem}\label{t:critical_regularity}
	Let $(M^4,g)$ equip with a critical metric $g$. Then there exist $\epsilon>0$ and $C$ such that if for some $r\le 1$ that
	\begin{align}
		\int_{B_r(p)}|Rm|^2\le \epsilon, \mbox{  and  }\sup_{B_r(p)}|\Ric|\le 3
	\end{align}
	then we have
	\begin{align}\label{e:curvature_estimate}
		\sup_{B_{\frac{1}{2}r}(p)}|Rm|\le Cr^{-2},
	\end{align}
\end{theorem}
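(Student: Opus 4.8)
The plan is to reduce the statement, exactly as in the proof of Theorem \ref{t:boundscurvature_assume_boundedRicc}, to the $\epsilon$-regularity for critical metrics with small \emph{scale-invariant average} curvature (Theorem \ref{t:eps_regularity_critical_metric}), using the Cheeger--Tian--Li estimate (Theorem \ref{t:curvatureL2_small}) to convert the hypothesis of small \emph{total} $L^2$ curvature into the hypothesis of small scale-invariant average $L^2$ curvature at some definite scale. By scaling invariance of both the hypotheses and the conclusion we may assume $r=1$; it then suffices to prove $|\Rm|(y)\le C$ for a universal constant $C$ and every $y\in B_{1/2}(p)$. To this end I would first fix a small universal constant $\epsilon'$, to be pinned down at the end, and let $\delta(\epsilon')$, $a(\epsilon')$ be the constants furnished by Theorem \ref{t:curvatureL2_small}; take $\epsilon\le\delta(\epsilon')$ in the statement. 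Given $y\in B_{1/2}(p)$, apply Theorem \ref{t:curvatureL2_small} on $B_{1/2}(y)\subset B_1(p)$, which is legitimate since $\sup_{B_{1/2}(y)}|\Ric|\le 3$ and $\int_{B_{1/2}(y)}|\Rm|^2\le\int_{B_1(p)}|\Rm|^2\le\epsilon\le\delta(\epsilon')$, to obtain a scale $s_y\in[a/2,1/2]$ (the scale produced by that theorem lies inside $B_{1/2}(y)$) with $s_y^4\fint_{B_{s_y}(y)}|\Rm|^2\le\epsilon'$.

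Next, rescale the metric by $(2/s_y)^2$ so that $B_{s_y}(y)$ becomes the unit-size ball $\tilde B_2(y)$. The equation \eqref{e:critical_metric} is scale invariant, so the rescaled metric is again critical with the same structural constants. Since $n-1=3$ we have $\Ric\ge -3g=-(n-1)g$ on $B_1(p)\supset B_{s_y}(y)$, and this inequality is only improved by the expanding rescaling, so $\widetilde{\Ric}\ge-(n-1)\tilde g$ on $\tilde B_2(y)$; moreover a direct scaling computation (using $n=4$, so that $\int|\Rm|^2$ is scale invariant) gives $\fint_{\tilde B_2(y)}|\widetilde{\Rm}|^2=2^{-4}s_y^4\fint_{B_{s_y}(y)}|\Rm|^2\le\epsilon'$. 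Choosing $\epsilon'$ small enough that $\epsilon'\le\delta(4,1)$, where $\delta(4,1)$ is the constant in the Ricci-lower-bound form of Theorem \ref{t:eps_regularity_critical_metric} with target $1$ (there the Bishop--Gromov volume comparison propagates the single-scale bound on $\tilde B_2(y)$ down to all sub-balls $\tilde B_\rho(z)\subset\tilde B_2(y)$), we conclude $\sup_{\tilde B_1(y)}|\widetilde{\Rm}|\le 1$, i.e. $\sup_{B_{s_y/2}(y)}|\Rm|\le 4s_y^{-2}\le 16a^{-2}$; in particular $|\Rm|(y)\le 16a^{-2}=:C$. Since $\epsilon'$, hence $\delta(\epsilon'),a(\epsilon'),\epsilon$ and $C$, are all universal, undoing the initial rescaling yields $\sup_{B_{r/2}(p)}|\Rm|\le Cr^{-2}$.

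Once the two cited theorems are granted the argument is essentially bookkeeping, and the only point that genuinely uses the bounded Ricci hypothesis — the place where I would expect a subtlety, were it not already isolated inside Theorem \ref{t:curvatureL2_small} — is the passage from a curvature bound known only as a total integral to one that is small in a scale-invariant sense at a definite scale; the Ricci bound also supplies, via volume comparison, the propagation of smallness to all sub-scales that Theorem \ref{t:eps_regularity_critical_metric} requires. Unlike for shrinking solitons, there is no analogue of the curvature-derivative estimate Lemma \ref{l:derivative_curvature} available for general critical metrics, which is exactly why this method cannot dispense with the Ricci bound, and by the examples in Subsection \ref{ss:kahler_zero_scalar} the bound cannot be dropped at all.
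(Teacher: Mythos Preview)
Your proof is correct and follows exactly the same approach as the paper, which simply records that the result is a direct consequence of Theorem \ref{t:curvatureL2_small} and Theorem \ref{t:eps_regularity_critical_metric}. You have merely supplied the bookkeeping details (the rescaling, the use of the Ricci-lower-bound remark after Theorem \ref{t:eps_regularity_critical_metric} to propagate smallness to sub-balls) that the paper leaves implicit.
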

\begin{proof}
	This is a direct consequence of the curvature estimate in Theorem \ref{t:curvatureL2_small} and the $\epsilon$-regularity theorem \ref{t:eps_regularity_critical_metric} for critical metrics.
\end{proof}
\begin{remark}
By the example in Subsection \ref{ss:kahler_zero_scalar}, the bounded Ricci curvature condition is a necessary condition in Theorem \ref{t:critical_regularity}.
\end{remark}
  \begin{remark}
 The constant $C$ in \eqref{e:curvature_estimate} can't be chosen arbitrarily small. Indeed, consider $M=S^3\times S_\theta^1$ with product metric where $S^3$ is the standard sphere and $S_\theta^1$ is a circle with radius $\theta$. We have $\Delta \Ric=0$ which means $M$ has critical metric. 	By letting $\theta$ small we have the integral $\int_{B_r}|\Rm|^2$ is small as we want. However the curvature $|\Rm|$ has a definite lower bound. That means the constant $C$ in \eqref{e:curvature_estimate} has a definite lower bound for any $\epsilon>0$.
  \end{remark}



\section{Further discussions on Ricci flow}\label{s:discussion}
The key ingredient for $\epsilon$-regularity of Ricci flow is the curvature derivative in Theorem \ref{t:curvature_derivative} which relies on a backward curvature estimate. Our backward curvature proof of Proposition \ref{p:backward} depends on the closeness of the manifold. If one can give a local proof of backward curvature estimate, then an $\epsilon$-regularity follows easily by the same argument as in Section \ref{s:eps_Ricciflow}. The backward estimate can be stated in the following way:

\textbf{Backward Pseudolocality Estimate:} Let $(M^4,g(t))$ be a $4$-dimensional complete Ricci flow $\partial_t g=-2\Ric$ on $(-1,1)$ with bounded scalar curvature $|R|\le 1$. Then there exists $\epsilon$ such that if $\sup_{B_r(x,g_0)}|\Rm|\le r^{-2}$ for some $r\le 1$ at time $t=0$  we have
\begin{align}\label{e:backward_assumption}
	\sup_{(y,t)\in B_{r/2}(x,g_0)\times [-\epsilon r^2,0]}|\Rm|(y,t)\le 2r^{-2}.
\end{align}
\\

The backward Pseudolocality estimate is roughly a space time regularity scale estimate; see also \cite{HeNa} for regularity scale estimate in their setting. Under volume noncollapsed assumption along the flow, the backward Pseudolocality estimate \eqref{e:backward_assumption} was proved in \cite{BaZh}.



\section*{Acknowledgements}
Both authors would like to thank Professor Aaron Naber and Gang Tian for insightful discussions and useful comments for the paper. The second author would also like to thank Professor Peter Topping and Kai Zheng for several helpful discussions during this work. The first author was supported by National Natural Science Foundation of China under grant (No.11501027). The second author was supported by the EPSRC on a Programme Grant entitled "Singularities of Geometric Partial Differential Equations" reference number EP/K00865X/1.

\bibliographystyle{plain}

\end{document}